\documentclass[a4paper]{article}

\usepackage[margin=1in]{geometry} 

\usepackage{amsmath}
\usepackage{amsthm}
\usepackage{amssymb}
\usepackage[dvipsnames]{xcolor}
\usepackage{cite}
\usepackage{comment}
\usepackage{graphicx}
\usepackage{multirow}
\usepackage{mathtools}

\mathtoolsset{showonlyrefs}

\usepackage[utf8]{inputenc}
\usepackage{hyperref}
\hypersetup{
	unicode,
	pdfauthor={Author One, Author Two, Author Three},
	pdftitle={A simple article template},
	pdfsubject={A simple article template},
	pdfkeywords={article, template, simple},
	pdfproducer={LaTeX},
	pdfcreator={pdflatex}
}

\usepackage[sort&compress,numbers,square]{natbib}

\theoremstyle{plain}
\newtheorem{theorem}{Theorem}
\newtheorem{corollary}[theorem]{Corollary}
\newtheorem{lemma}[theorem]{Lemma}

\newtheorem{proposition}[theorem]{Proposition}

\theoremstyle{definition}
\newtheorem{definition}[theorem]{Definition}
\newtheorem{example}[theorem]{Example}
\newtheorem{remark}[theorem]{Remark}

\newcommand{\NN}{\mathbb{N}}

\newcommand{\RR}{\mathbb{R}}

\newcommand{\ds}{\displaystyle}
\newcommand{\dr}{\mathrm{d}}
\newcommand{\drdth}{\;\mathrm{d} r\;\mathrm{d}\theta}
\newcommand{\dxdy}{\;\mathrm{d} x\;\mathrm{d} y}

\newcommand{\innerprod}[2]{\langle #1 , #2 \rangle}

\usepackage[colorinlistoftodos,prependcaption,textsize=small]{todonotes}

\definecolor{greenpie}{rgb}{0.69, 0.95, 0.76}

\usepackage{algorithm, algpseudocode} 
\usepackage{mathrsfs} 

\usepackage{lipsum}

\title{Multiple Orthogonal Polynomials on the Ball and Radial Extensions}
\author{Lidia Fernández$^\star$ \and Juan Antonio Villegas$^\star$}

\date{
    \texttt{lidiafr@ugr.es},\qquad \texttt{jantoniovr@ugr.es}\\[2ex]%
	$^\star$Departamento de Matemática Aplicada and Instituto de Matemáticas (IMAG). Universidad de Granada \\[2ex]%
}

\begin{document}
	\maketitle
	
	\begin{abstract}
    
    A primary method for constructing orthogonal polynomials on the unit ball consists of combining a Jacobi-type radial component with a spherical harmonic angular part. Building upon this framework and using Jacobi-Pi\~neiro multiple orthogonal polynomials, this paper introduces Type~I and Type~II multiple orthogonal polynomials on the multidimensional ball. To demonstrate the practical utility of these definitions, we establish multivariate extensions of several fundamental results from univariate multiple orthogonality. Finally, we extend the construction to more general domains by introducing multiple orthogonality with respect to radial weights.
		
		\noindent\textbf{Keywords:} Multiple orthogonal polynomials, multivariate orthogonal polynomials, radial bases, Spherical harmonics.

        \noindent\textbf{MSC: } 33C45, 33C50, 33C55, 42C05.
	\end{abstract}


\section{Introduction}\label{sec:intro}

Multiple orthogonal polynomials arise when the classical notion of orthogonality is extended to settings involving several measures instead of a single one. In this framework, a polynomial is required to satisfy a collection of orthogonality conditions, each associated with a different measure, and these conditions are organized according to a multi‑index. This naturally leads to two complementary formulations, usually referred to as Type~I and Type~II, which together capture the structure of the underlying system (see \cite{Ism05, MFVA16, VA20} for general settings in the univariate case). \medskip

The theory gained prominence as it became clear that these polynomials appear in a variety of contexts where several influences or sources interact simultaneously. They play a central role in Hermite–Padé approximation, where one seeks to approximate vector‑valued functions in a coordinated way \cite{VA06}. They also surface in problems from probability and mathematical physics, such as random matrix models with an external field or systems of non-intersecting paths \cite{BK04, DK07}. In these settings, the recurrence relations and analytic properties of the polynomials provide an effective means of describing the behavior of the models. \medskip

While univariate multiple orthogonality and classical multivariate orthogonality have long been, and continue to be, the focus of extensive research, work on multivariate multiple orthogonality is still in its early stages. In \cite{FV26}, the authors introduced a general framework for defining Type~I and Type~II bivariate multiple orthogonal polynomials on a two‑dimensional lattice, along with several extensions and illustrative examples. More recently, \cite{MRW25} examined the mixed‑type case, focusing on bivariate multiple orthogonal polynomials along the step‑line. In \cite{FFV26}, the authors further contributed to the field by developing a Rodrigues-type formula for multiple orthogonal polynomials defined on the simplex and, in addition, they established a connection with a Hermite–Padé type approximation of the kind introduced in \cite{Sor02}. \medskip

When studying systems of orthogonal polynomials on the unit ball, one of the most commonly used bases consists of polynomials obtained as the product of a Jacobi‑type polynomial in the radial variable and a spherical harmonic. One of the main objectives of this work is to build on this idea in order to construct Type~I and Type~II multiple orthogonal polynomials on the ball. To this end, we replace the radial component with Jacobi–Piñeiro multiple orthogonal polynomials while retaining the spherical harmonic. In this way, we show that the resulting family forms a basis of polynomials satisfying multiple orthogonality relations on the ball. Using this methodology, we also aim to extend orthogonal polynomials associated with general radial weights to the multiple orthogonal setting, providing an example illustrating how multiple Laguerre polynomials can be used to construct multivariate polynomials. In addition, we generalize existing results on multiple orthogonality to this multivariate framework.

\medskip

The structure of the paper is as follows. Section~\ref{sec:pre} provides essential preliminary tools, detailing properties of Multiple Orthogonal Polynomials, with a specific focus on the Jacobi-Pi\~neiro and multiple Laguerre polynomials, as well as an overview of spherical harmonics and orthogonal polynomials on the unit ball. In Section~\ref{sec:mop-ball}, we present extensions of orthogonal polynomials on the ball to the multiple orthogonal framework, explicitly describing the case for two variables (the disk). Afterwards, in Section~\ref{sec:NNR}, these newly developed constructions and their properties are applied to establish multivariate extensions of two fundamental results from univariate multiple orthogonality: the biorthogonality relation and Nearest Neighbor Relations. Finally, we study an extension to the multiple setting for orthogonal polynomials for radial weights in Section~\ref{sec:radial-weights}, introducing MOPs for multivariate Hermite Weights as an example.

\section{Preliminaries}\label{sec:pre}

With the aim of introducing an extension of MOPs in multi-dimensional domains, in this section we recall some tools and fix some notations: generalities about multiple orthogonal polynomials in one variable, focusing on the special case of Jacobi-Pi\~neiro polynomials, spherical harmonics, and orthogonal polynomials on the unit ball. 

\subsection{Multiple Orthogonal polynomials}

In the Orthogonal Polynomials setting, multiple orthogonality is a generalization of standard orthogonality where, rather than fixing a single measure and, in turn, an integral inner product $\innerprod{\cdot}{\cdot}_\omega$, several measures are provided, so that polynomials, named Multiple Orthogonal Polynomials (MOPs) satisfy orthogonality relations with respect to a system of measures and, in turn, a set of inner products  \cite{VA20, Ism05, Apt98}.

More precisely, consider a set of positive weight functions $\omega_1,\dots,\omega_r$ and their associated integral inner products
\begin{equation*}
    \innerprod{f}{g}_l := \innerprod{f}{g}_{\omega_l}=\int_\RR f(t)g(t)\omega_l(t)\dr t,
\end{equation*}
for $f,g\in \bigcap_{l=1}^r L^2(\RR;\omega_l)$. The orthogonality relations with respect to these inner products, as well as the degree of the polynomials, are determined by a multi-index $\vec n =(n_1,\dots,n_r)\in\NN^r_0$. We will denote the modulus or the norm of a vector $\vec v =(v_1,\dots,v_m)\in\mathbb R^m$ as $|\vec v|=|v_1|+\cdots +|v_m|$, so that $|\vec n| =n_1+\cdots+n_r$.\\

Fixing $\vec n\in\NN^r_0$, there are two ways of defining multiple orthogonal polynomials, known as Type~I and Type~II multiple orthogonal polynomials. Type~I MOPs are $r$ polynomials often represented in a vector $(A_{\vec n,1},\dots,A_{\vec n,r})$, such that for $l=1,\dots, r$ the degree of  $A_{\vec n,l}$ is at most $n_l-1$ and they satisfy 
\begin{equation}
\label{eq:type-i-orthogonality-1-var}    
\sum_{l=1}^r \innerprod{A_{\vec n,j}}{t^k}_l=\delta_{k,|\vec n|-1}, \qquad 0\leq k \leq |\vec n|-1.
\end{equation}

On the other hand, Type~II MOP is a polynomial (usually considered monic) $P_{\vec n}$ of degree $|\vec n|$ which is orthogonal to polynomials up to degree $n_l-1$ with respect to the $l$-th measure. This means
\begin{equation}
    \label{eq:type-ii-orthogonality-1-var}
    \innerprod{P_{\vec n}}{t^k}_l = 0, \quad 0\leq k\leq n_l-1, \quad \text{ for } l=1,\dots, r.
\end{equation}

These different ways of constructing multiple orthogonal polynomials are, indeed, equivalent for a fixed multi-index $\vec n$, which is said to be \emph{normal} if the existence of both Type~I and Type~II polynomials holds. If every multi-index is normal for a set of weights $\omega_1,\dots,\omega_r$ and the associated measures, this system of measures is called a \emph{perfect system}. 

Multiple orthogonality has several results extending some others of the standard orthogonal polynomials theory: existence and uniqueness theorems, Christoffel-Darboux formulas or Hermite-Padé Approximation, among others, see \citep[Section 23.1]{Ism05}, or \citep[Section 1]{VA20}. In particular, we will spotlight the Nearest Neighbour Recurrence Relations (NNRR), which extend the three-term recurrence relation satisfied by univariate orthogonal polynomial sequences.

From now on, assume that every mentioned multi-index is normal and let $\vec n,\vec m \in \NN^r$. The multi-indices $\vec n$ and $\vec m$ are said to be \emph{neighbours} if either $\vec n = \vec m + \vec e_l$ or $\vec n = \vec m - \vec e_l$ for any $l\in\{1,\dots,r\}$, where $\vec e_l = (0,\cdots,0, \overset{(l)}{1},0,\cdots,0)$ is a vector whose components are zero excepting the one in the $l$-th position, which is one. If we fix $\vec n=(n_1,\dots,n_r)\in\NN^r$, and consider $\vec m_{1},\dots,\vec m_r,\vec m_{r+1}=\vec n$ a path of neighbour multi-indices from $\vec m_1 =(n_1-1,\dots,n_r-1)$ to $\vec n$ and choose $\vec n+\vec e_k$ for any $k\in\{1,\dots,r\}$, the following relation holds \cite[Theorem 23.1.7]{Ism05}:
\begin{equation}
    \label{eq:NNRR-1-var}
    t P_{\vec n}(t) = P_{\vec n+\vec e_k}(t) + a_{\vec n,0} P_{\vec n}(t) + \sum_{l=1}^r a_{\vec n,l} P_{\vec m_l}(t).
\end{equation}
A different relation concerning the closest multi-indices $\vec n-\vec e_l$ for $l=1,\dots,r$ would be \cite[Section 3.2]{VA20}
\begin{equation}
    \label{eq:NNRR-1-var-2}
    t P_{\vec n}(t) = P_{\vec n+\vec e_k}(t) + b_{\vec n,0} P_{\vec n}(t) + \sum_{l=1}^r b_{\vec n,l} P_{\vec n-\vec e_l}(t).
\end{equation}

\begin{figure}[h]
    \centering
    \begin{tabular}{cccc}
         \includegraphics[height=3.5cm]{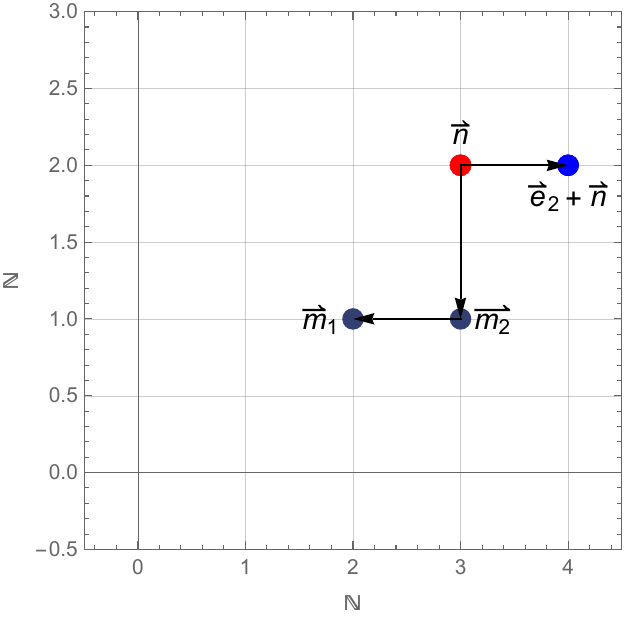} &
         \includegraphics[height=3.5cm]{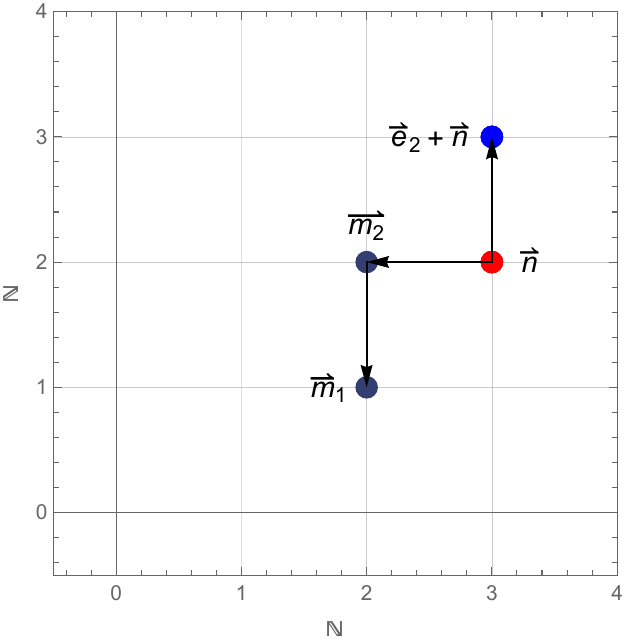} &
         \includegraphics[height=3.5cm]{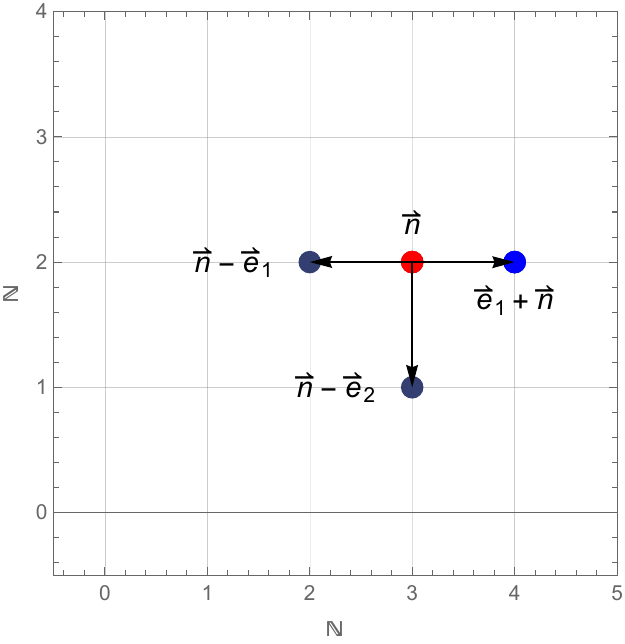} &
         \includegraphics[height=3.5cm]{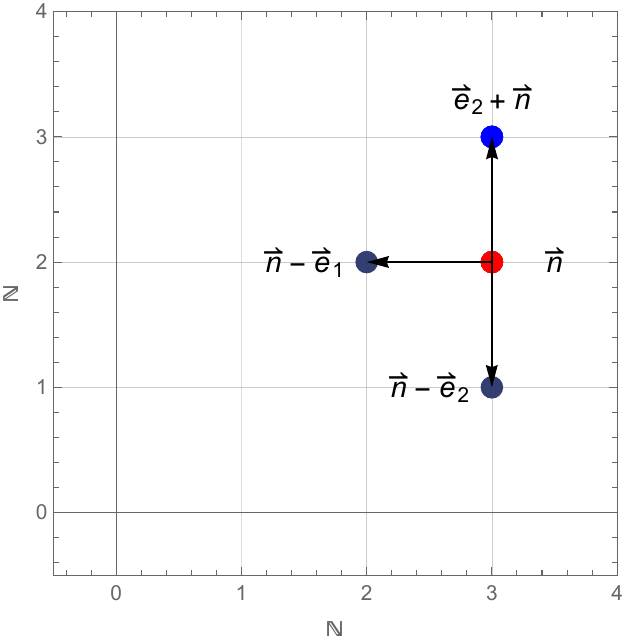} \\
         (a) & (b) & (c) & (d) \\
         \multicolumn{2}{c}{(a,b) Possible multi-indices involved in \eqref{eq:NNRR-1-var}} & 
         \multicolumn{2}{c}{(c,d) Possible multi-indices involved in \eqref{eq:NNRR-1-var-2}}
    \end{tabular}
    \caption{Plot of some choices of the multi-indices in the NNRR for $r=2$ and $\vec n =(3,2)$.}
    \label{fig:NNRR-1-var}
\end{figure}

See Figure~\ref{fig:NNRR-1-var} for a plot of the concerned multi-indices in both relations using 2 measures and $\vec n = (3,2)$. Besides, there exist analogous NNRR concerning Type~I MOP, see \cite[Theorem 23.1.9]{Ism05} and \cite[Section 3.2]{VA20}. 

\subsection{Jacobi-Pi\~neiro Multiple Orthogonal Polynomials}\label{subsec:JP}

In this document, we will center our attention on a specific system of measures which gives rise to the so-called \emph{Jacobi-Pi\~neiro multiple orthogonal polynomials}, a multiple orthogonal extension of the well-known Jacobi polynomials.

Most of the time, Jacobi-Pi\~neiro polynomials are introduced as those satisfying \eqref{eq:type-i-orthogonality-1-var} and \eqref{eq:type-ii-orthogonality-1-var} with the weights (see \cite[Section 3.7]{VA20})
$$
\tilde\omega_l(t):=t^{\alpha_l} (1-t)^{\gamma}, \qquad 0\leq t\leq 1,\quad  1\leq l\leq r.
$$
These polynomials were first introduced in \cite{Pin87} for the case $\gamma=0$. In this paper, we map the variable $t$ so that $t\longmapsto1-t$. With this change, we obtain different expressions -- but still computable using the standard ones -- that are more suitable for our concerns. 

Then, given $\alpha,\gamma_1,\dots,\gamma_p >-1$ such that $\gamma_i-\gamma_j\not\in\mathbb Z$ whenever $i\neq j$, consider the following weight functions supported on $[0,1]$: 
\begin{equation}
    \label{eq:JP-wights-1-var}
    \omega_l(t):=t^\alpha (1-t)^{\gamma_l}, \qquad 0\leq t\leq 1,\quad  1\leq l\leq r.
\end{equation}
The condition $\gamma_i-\gamma_j\not\in\mathbb Z$ whenever $i\neq j$ guaranties that this set of weights composes an AT-System, which is a particular case of a perfect system \cite[Section 23.1.2]{Ism05}. In this way, the existence of both types of MOPs is ensured for every $\vec n\in\mathbb N^r$. 

Denote $\vec\gamma=(\gamma_1,\dots,\gamma_r)$ and consider a multi-index $\vec n =(n_1,\dots,n_r)\in\NN_0^r$. The Type~II Jacobi-Pi\~neiro multiple orthogonal polynomials can be computed as \cite{ABVA03}
\begin{equation}\label{eq:JP-type-ii}
P^{(\alpha,\vec\gamma)}_{\vec n}(t)= \left(\frac 1 {\omega_r(t)}\dfrac{d^{n_r}}{dt^{n_r}}\omega_r(t)(1-t)^{n_r}\right) \cdots \left(\frac 1 {\omega_2(t)}\dfrac{d^{n_2}}{dt^{n_2}}\omega_2(t)(1-t)^{n_2}\right) \left(\frac 1 {\omega_1(t)}\dfrac{d^{n_1}}{dt^{n_1}}\omega_1(t)(1-t)^{n_1}\right)t^{|\vec n|}.
\end{equation}

These polynomials have degree $|\vec n|$ and are orthogonal to every polynomial of degree lower than $n_l$ with respect to $\omega_l$. More precisely, 
\begin{equation}\label{eq:multiple-orthogonality-JP-type-ii}
\langle P^{(\alpha,\vec\gamma)}_{\vec n}(t), t^k\rangle_l = \int_0^1  P^{(\alpha,\vec\gamma)}_{\vec n}(t)\, t^k \,t^{\alpha}(1-t)^{\gamma_l}\,\dr t = 0 \qquad\text{ if }0\leq k\leq n_l-1,\qquad 1\leq l\leq r.
\end{equation}

This representation in terms of a Rodrigues-type formula was introduced in \cite{ABVA03}, together with other expressions for Type~II MOP with respect to classical weights. Owing to their reduced complexity, Type~II MOPs have historically received greater attention than their Type~I counterparts. Consequently, explicit hypergeometric representations for Type~I Jacobi-Pi\~neiro polynomials have only recently emerged. Initial formulations for the two-measure case ($r=2$) were established in \cite{BDFM23} and \cite{BDFMAF23}, with subsequent generalizations for an arbitrary number of measures $r\geq 2$ provided in \cite{BDFM24}:
\begin{multline}\label{eq:JP-type-ii-type-i}
    A^{(\alpha,\vec\gamma)}_{\vec n,l}(t) = \dfrac{\prod_{k=1}^r(\gamma_k+\alpha+|\vec n|)_{n_k}}{(n_l-1)!\prod_{k=1,k\neq l}^{p}(\gamma_k-\gamma_l)_{n_k}}\dfrac{\Gamma(\gamma_l+\alpha+|\vec n|)}{\Gamma(\alpha+|\vec n|)\Gamma(\gamma_l+1)}\times \\ \times {_{r+1}}F_r\left( \begin{matrix} -n_l+1,\gamma_l+\alpha+|\vec n|,(\gamma_l+1)\vec 1_{r-1}-\vec\gamma^{(l)}-\vec n^{(l)} \\ \gamma_l+1,(\gamma_l+1)\vec 1_{r-1}-\vec\gamma^{(l)} \end{matrix} \,;\, 1-t \right),
\end{multline}
where we denote $\vec 1_{r-1}=(1,\dots,1)\in\mathbb R^{(r-1)}$ and given $\vec a\in\mathbb R^r$, $\vec a^{(l)}=(a_1,\dots,a_{l-1},a_{l+1},\dots,a_r)\in\RR^{r-1}$ is the vector $\vec a$ with the $l$-th component removed.

As mentioned in the previous section, the polynomials $A^{(\alpha,\vec\gamma)}_{\vec n,l}$ have degree at most $n_l-1$, $1\leq l\leq r$, and all together satisfy 
\begin{equation}\label{eq:multiple-orthogonality-JP-type-i}
    \sum_{l=1}^r \langle A^{(\alpha,\vec\gamma)}_{\vec n,l},t^k \rangle_l =\sum_{l=1}^r\int_0^1A^{(\alpha,\vec\gamma)}_{\vec n,l}\,t^k t^{\alpha}(1-t)^{\gamma_l}\,\dr t =\begin{cases}
        0 & \text{ if } 0\leq k\leq |\vec n|-2,\\
        1  & \text{ if } l= |\vec n|-1.
    \end{cases}
\end{equation}
Jacobi-Pi\~neiro polynomials have attracted significant attention within the field of multiple orthogonality since their introduction. Various representations in terms of hypergeometric functions were established in \citep{ABVA03, BCVA05}, reflecting their analytical versatility. Beyond their theoretical properties, they play a crucial role in several practical contexts, including rational approximation \cite{Apt98, VA20}, electrostatic partners \cite{MFOSL22}, number theory \cite{Fis04}, and even random walks \cite{BDFMAF23}, urn models \cite{GI21} and related stochastic processes \citep{BDFM25}, see also \cite{MFVA16}.

\subsection{Multiple Laguerre polynomials}\label{subsec:Laguerre-MOP}

Another commonly used family of multiple orthogonal polynomials is that extending classical Laguerre polynomials to this multiple orthogonal setting. Recall that, given $\alpha>-1$, Laguerre polynomials $\{L_n^{(\alpha)}\}_{n\geq 0}$ are those orthogonal with respect to the weight $t^\alpha e^{-t}$ on $[0,\infty)$. We may extend Laguerre polynomials to the multiple case by varying the parameter $\alpha$, considering $\alpha_1,\dots,\alpha_r>-1$, giving rise to multiple Laguerre polynomials \emph{of the first kind}, see \cite[Section 3.2]{ABVA03}, \cite[Section 23.4.1]{Ism05} or \cite[Section 3.6.1]{VA20}. However, we will work with a different extension, where the exponential decay at infinity is $e^{-t}$ is replaced by $e^{-c_l t}$ with parameters $c_1,\dots,c_r$, obtaining multiple Laguerre polynomials \emph{of the second kind} \cite[Section 3.1]{BK05}. 

More precisely, let $\alpha>-1$ and $c_1,\dots,c_r>0$ with $c_i\neq c_j$ whenever $i\neq j$, and define the weights \cite[Section 3.6.2]{VA20}
$$
\omega_l(t):= t^\alpha e^{-c_l t},\qquad t\geq 0, \quad 1\leq l\leq r.
$$
Then, the Type~II multiple Laguerre polynomial of the second kind, for $\vec n=(n_1,\dots,n_r)\in\NN^r$, denoted by $L^{(\alpha,\vec c)}_{\vec n}$ where $\vec c =(c_1,\dots,c_r)\in(0,+\infty)^r$, is monic and satisfies
\begin{equation}\label{eq:mop-Laguerre-type-ii-1-var}
\langle L^{(\alpha,\vec c)}_{\vec n}, t^k\rangle_l = \int_0^{\infty} L^{(\alpha,\vec c)}_{\vec n}(t) \, t^k \, t^\alpha e^{-c_l t}\, \dr t =0\qquad \text{ if }0\leq k\leq n_l-1, \quad 1\leq l\leq r.
\end{equation}
On the other hand, the Type~I multiple Laguerre polynomials of the second kind are denoted by $B^{(\alpha,\vec c)}_{\vec n,1},\dots,B^{(\alpha,\vec c)}_{\vec n,r}$ with $\deg B^{(\alpha,\vec c)}_{\vec n,l}\leq n_l-1$ and satisfy
\begin{equation}\label{eq:mop-Laguerre-type-i-1-var}
    \sum_{l=1}^r \langle B^{(\alpha,\vec c)}_{\vec n,l}, t^k \rangle_l = \sum_{l=1}^r \int_0^\infty B^{(\alpha,\vec c)}_{\vec n,l}(t) \, t^k \, t^\alpha e^{-c_l t}\, \dr t = \begin{cases}
        0 & \text{ if } 0\leq k\leq |\vec n|-2,\\
        1  & \text{ if } k= |\vec n|-1.
    \end{cases}
\end{equation}

Explicit expressions as well as several properties and integral representations of these polynomials, together with applications in random matrices (Wishart ensemble) can be found in \cite{ABVA03}, \cite{BK05}, \cite[Section 23.4.2]{Ism05}, or \cite[Section 3.6.2]{VA20}. Furthermore, the asymptotics of multiple Laguerre polynomials, as well as their zero distribution, have been studied in \cite{LW08} and \cite{NVA16}.

Once multiple orthogonal polynomials have been introduced, we return to the single-measure but multivariate setting. In the following sections, we introduce what will become a key tool: orthogonal polynomials on the unit sphere and on the unit ball.

\subsection{Spherical harmonics}

Let $\mathbf x =(x_1,\dots,x_d)\in\RR^d$ and consider the euclidean norm $\|\mathbf x\|=\sqrt{x_1^2+\cdots+x_d^2}$. Denote by $\mathcal P_{n}^d$ the space of real homogeneous polynomials of degree $n$, and by $\Pi_n^d$ the space of real $d$-variate polynomials of degree at most $n$. Recall that \cite[Section 1.1]{FX13}
\begin{align}\label{eq:dimensions}
        \dim \mathcal P_n^d &= \binom{n+d-1}{n},& \dim\Pi_n^d =\binom{n+d}{n}.
\end{align}
    
Now, the Laplace operator is a two order differential operator defined as
$$
\Delta = \dfrac{\partial^2}{\partial x_1^2}+\cdots+\dfrac{\partial^2}{\partial x_d^2}.
$$
With these notations, we introduce the following definitions
\begin{definition}
For $n\geq 0$, the space of \textbf{real harmonic polynomials}  of degree $n$ on $\RR^d$ is defined as
$$
\mathbb H_n^d := \{P\in \mathcal P_n^d : \Delta P =0\}. 
$$
The \textbf{spherical harmonics} are the restrictions of the real harmonic polynomials to the unit sphere $\mathbb S^{d-1}$.
\end{definition}

Indeed, spherical harmonics are the restrictions of the elements of $\mathbb H_n^d $ to the sphere, but, following the standards \cite{FX13}, we will call $\mathbb H_n^d$ the space of spherical harmonics too. The dimension of this space is \cite[Corollary 1.1.4]{FX13}
    \begin{equation}
        \label{eq_dim-H}
        \dim \mathbb H_n^d = \dim \mathcal P_n^d-\dim \mathcal P_{n-2}^d =\binom{n+d-1}{n}-\binom{n+d-3}{n-2},
    \end{equation}
    where it is convenient to consider $\dim \mathcal P_{n-2}^d=0$ if $n=0,1$. 
    
    Recall the Lebesgue measure on the sphere $\mathbb S^{d-1}$. Fixing the dimension $d$ and $\{Y_\nu^n: 1\leq\nu\leq \dim \mathbb H_n^d\}$, $\{Y_\eta^m: 1\leq\eta\leq \dim \mathbb H_m^d\}$ orthogonal bases of $\mathbb H_n^d$ and $\mathbb H_m^d$ respectively, then \cite[Theorem 1.1.2]{FX13}
    \begin{equation}
        \label{eq:orthogonality-harmonics}
        \int_{\mathbb S^{d-1}}Y_\nu^n(\mathbf x)Y_\eta^m(\mathbf x)\,\dr \mathbf x = 0\qquad \text{ if }n\neq m\text{ or }\nu\neq\eta.
    \end{equation}

    Now, we show explicit expressions for spherical harmonics.

    \subsubsection{Spherical harmonics of two variables}\label{subsubsec:sph-harm-2}

    Whenever the dimension is $2$, in this section and in the whole paper, we will eventually express the points in $\RR^2$ using their polar coordinates, so that $\mathbf x = (x,y)\equiv(\rho,\theta)\in \RR^2$ where
    
    \begin{equation}
    \label{eq:polar-coordinates}
    \left.\begin{array}{rcl}
         x & = & \rho \cos\theta  \\
         y & = & \rho \sin\theta 
    \end{array}\right\}, \qquad 0\leq \rho\leq 1, \quad 0\leq \theta<2 \pi.
    \end{equation}
    The Jacobian of this transformation is $\rho$. This information will be useful for future changes of variable, see Section~\ref{sec:OP-disk}.

    In this bivariate setting, $\dim \mathbb H_n^2 = 2$. An orthogonal basis of $\mathbb H_n^2$ is given, in polar coordinates, by \cite[Section 1.6.1]{FX13}
    \begin{align}\label{eq:harmonics-2-var}
        Y_1^n(\rho,\theta) &= \rho^n \cos n\theta, & Y_2^n(\rho,\theta)=\rho^n\sin n\theta.        
    \end{align}
    Indeed, if we restrict the spherical harmonics to the unit circle $\mathbb S^1$, they reduce to
    \begin{align}\label{eq:harmonics-2-var-circle}
        Y_1^n(1,\theta) &= \cos n\theta, & Y_2^n(1,\theta)=\sin n\theta.    
    \end{align}

    \subsubsection{Spherical harmonics of three variables}\label{subsubsec:sph-harm-3}

    In the $3$-variate case, we use spherical coordinates to express points on $\RR^3$, so that we use the identification $\mathbf{x}=(x,y,z)\equiv(\rho,\theta,\phi)\in \RR^3$ where
    \begin{equation}
    \label{eq:sph-coordinates}
    \left.\begin{array}{rcl}
         x & = & \rho \sin\theta\sin\phi  \\
         y & = & \rho \sin\theta\cos\phi  \\
         z & = & \rho \cos\theta
    \end{array}\right\}, \qquad 0\leq \rho\leq 1, \quad 0\leq \theta\leq\pi,\quad 0\leq\phi<2 \pi.
    \end{equation}
    In this case, the Jacobian of the transformation to spherical coordinates is $\rho^2\sin\theta$.
    
    The space of spherical harmonics of degree $n$ and three variables $\mathbb H_n^3$ has dimension $2n+1$. In spherical coordinates, an orthogonal basis of $\mathbb H^3_n$ is (see \cite[Equation~1.6.5]{FX13} and \cite[Theorem 4.1.4]{DX14}).
    \begin{equation}
    \label{eq:sph-harmonics}
        \begin{aligned}
        Y_{k,1}^n(\rho,\theta,\phi)=\rho^n \sin^k\theta \, C_{n-k}^{k+1/2}(\cos(\theta))\,\cos (k\phi),\quad 0\leq k \leq n \\
        Y_{k,2}^n(\rho,\theta,\phi)=\rho^n \sin^k\theta \, C_{n-k}^{k+1/2}(\cos(\theta))\,\sin (k\phi),\quad 1\leq k \leq n, \\
        \end{aligned}
    \end{equation}
    where $\{C_n^{(\alpha)}(t)\}_{n\geq 0}$ are the Gegenbauer polynomials \cite[Section 4.7]{Sze59}, which are orthogonal on $[-1,1]$ with respect to the weight $(1-t^2)^{\alpha-1/2}$.
    
    Eventually, we will denote these polynomials as $Y^n_\nu$, where
    \begin{equation}
        \label{eq:sph-harmonics-nu}
        \begin{aligned}
        Y_{\nu}^n(\rho,\theta,\phi) &=Y_{\nu,1}^n(\rho,\theta,\phi)=\rho^n \sin^\nu(\theta) \, C_{n-\nu}^{\nu+1/2}(\cos(\theta))\,\cos(\nu\phi), & 0\leq \nu \leq n \\
        Y_{\nu}^n(\rho,\theta,\phi) &=Y_{\nu-n,2}^n(\rho,\theta,\phi)=\rho^n \sin^{\nu-n}(\theta) \, C_{2n-\nu}^{\nu-n+1/2}(\cos(\theta))\,\sin((\nu-n)\phi), & n+1\leq \nu \leq 2n, \\
        \end{aligned}
    \end{equation}
    so that $\{Y_\nu^n:\nu=0,\dots,2n\}$ is an orthogonal basis of $\mathbb H_n^3$. In addition, the restriction of the spherical harmonics to the unit sphere $\mathbb S^2$ reduces to $ Y_{k,1}^n(1,\theta,\phi)$ and $ Y_{k,2}^n(1,\theta,\phi)$.

    \subsubsection{Spherical harmonics of $d$ variables}\label{subsubsec:sph-harm-d}

    Now, in the general case, an arbitrary number $d\geq 2$ of variables is considered. For an arbitrary tuple $\mathbf{x}=(x_1,\dots,x_d)\in\RR^d$, we will consider its spherical polar coordinates, which generalize \eqref{eq:polar-coordinates} and \eqref{eq:sph-coordinates} identifying $\mathbf x =(x_1,\dots,x_d)\equiv (\rho,\theta_1,\dots,\theta_{d-1})$, where
    \begin{equation}
    \label{eq:sph-coordinates-d}
    \left.\begin{array}{rcl}
         x_1 & = & \rho \cos\theta_{d-1},  \\
         x_2 & = & \rho \sin\theta_{d-1} \cos\theta_{d-2},  \\
          & \vdots &  \\
         x_{d-1} & = &\rho \sin\theta_{d-1}\cdots \sin\theta_2 \cos \theta_1  \\
         x_d &=& \rho \sin\theta_{d-1}\cdots \sin\theta_2 \sin \theta_1
    \end{array}\right\},
    \quad \begin{array}{l}
         0\leq \rho\leq 1, \quad 0\leq\theta_1<2 \pi,  \\
         0\leq \theta_k\leq\pi,\ \ k=2,\dots,d-1 . 
    \end{array} 
    \end{equation}
    The Jacobian of this transformation is 
    \begin{equation}
        \label{eq:jacobian-sph-pol-coordinates}
        \prod_{k=1}^{d-2}\rho^{d-1}(\sin\theta_{d-k})^{d-k-1}.
    \end{equation}

    In order to define an orthogonal basis of $\mathbb H_n^d$, the space of spherical harmonics in $d$ variables of degree $n$, we will work with multi-indices $\mathbf m = (m_1,\dots,m_d)\in\NN^d_0$ such that $|\mathbf m|=n$ and $m_d\in\{0,1\}$. The number of possible tuples satisfying these conditions is exactly $\dim \mathbb H_n^d$, given in \eqref{eq_dim-H}, see \cite[Section 4.1]{DX14}. In spherical polar coordinates, an orthogonal basis of $\mathbb H_n^d$ is given by $\{Y_{\mathbf m}^n(\mathbf x):|\mathbf m|=n,\, m_d\in\{0,1\}\}$, where
    \begin{equation}
    \label{eq:sph-harmonics-d}
        Y_{\mathbf m}^n(\rho,\theta_1,\dots,\theta_{d-1})=\rho^n\, g(\theta_1)\, \prod_{k=1}^{d-2}\left[ (\sin\theta_{d-k})^{\beta_k}\, C_{m_k}^{\lambda_k}(\cos \theta_{d-k}) \right], 
    \end{equation}
    with $g(\theta_1)=\cos((m_{d-1}+m_d)\theta_1)$ if $m_d=0$ or $g(\theta_1)=\sin((m_{d-1}+m_d)\theta_1)$ if $m_d=1$, $\beta_k=\sum_{i=k+1}^d m_i$, and $\lambda_k = \beta_k + (d-k-1)/2$. Again, the restriction of these polynomials to the unit sphere $\mathbb S^{d-1}$ is $Y_{\mathbf m}^n(1,\theta_1,\dots,\theta_{d-1})$.

    Despite the generality of \eqref{eq:sph-harmonics-d} for any dimension $d\geq 2$, in the 2 or 3-variable setting it is common to use the expressions introduced in Sections~\ref{subsubsec:sph-harm-2} and \ref{subsubsec:sph-harm-3} for the sake of simplicity; see next two sections.

    \subsection{Orthogonal polynomials on the Unit Ball}

    One of the most studied domains in the theory of multivariate orthogonal polynomials is the unit ball of $\RR^d$, defined as
    \begin{equation}
        \label{eq:d-Ball}B^d = \{\mathbf x\in\RR^d:\|\mathbf x\|\leq 1\}.
    \end{equation}
    Choose $\mu>-1$ and define the following weight function on $B^d$:
    \begin{equation}
        \label{eq:weight-ball-d}
        W_\mu(\mathbf x)=w_\mu\,(1-\|\mathbf x\|^2)^{\mu}=w_\mu\,(1-x_1^2-\cdots-x_d^2)^\mu, \qquad \mathbf x = (x_1,\dots,x_d)\in B^d,
    \end{equation}
    where the normalization constant $w_\mu$ is 
    $$
    w_\mu= \left(\int_D W_\mu(\mathbf x)\, \dr\mathbf x\right)^{-1} = \frac{\Gamma(\mu+1+\frac{d}{2})}{\pi^{d/2}\Gamma(\mu+1)}.
    $$
    This weight leads to the inner product
    \begin{equation}
    \label{eq:inner-prod-d}
        \innerprod{f}{g}_\mu:=\int_{B^d}f(\mathbf x)g(\mathbf x)W_\mu(\mathbf x)\,\dr \mathbf x,
    \end{equation}
    for $f,g\in L^2(B^d;W_\mu)=\left\{ u(\mathbf x): \int_{B^d} (u(\mathbf x))^2W_\mu(\mathbf x)\,\dr \mathbf x<\infty\right\}$ \cite[Section 5.2]{DX14}.
  
    Fixed this weight and this inner product, we denote 
    $$
    \mathcal V_n^d=\mathcal V_n^d(W_\mu)=\{P\in\Pi_n^d:\innerprod{P}{Q}_{\mu}=0,\; \forall Q\in \Pi_{n-1}^d\}, 
    $$
    the space of polynomials of $d$ variables and degree $n$ which are orthogonal to lower degree polynomials with respect to the weight $W_\mu$.

    There exist several bases of $\mathcal V_n^d$, expressed in terms of Gegenbauer polynomials or Appell polynomials (see \cite[Sections 5.2.1, 5.2.2]{DX14}), but we will focus on the one given explicitly in terms of spherical harmonics. 
    
    More precisely, consider $n\geq 0$, $0\leq j\leq n/2$ and $\{Y_\nu^{n-2j}:1\leq \nu\leq \dim \mathbb H_{n-2j}^d\}$ an orthogonal basis of $\mathbb H_{n-2j}^d$. Then, the polynomials
    \begin{equation}
        \label{eq:polynomials-in-the-ball-d}
        P_{j,\nu}^n (\mathbf x)=P_j^{(\nu,n-2j+(d-2)/2)}(2\|\mathbf x\|^2-1) \, Y_{\nu}^{n-2j}(\mathbf x),
    \end{equation}
    where $P^{(\alpha,\beta)}_j$ denotes the Jacobi polynomial with parameters $\alpha,\beta>-1$ and degree $j$, form an orthogonal basis of $\mathcal V_n^d$, see \cite[Proposition 5.2.1]{DX14}. In particular, if we use the basis $\{Y_{\mathbf m}^n\}$ given in \eqref{eq:sph-harmonics-d}, for $\mathbf m =(m_1,\dots,m_{d})$ with $|\mathbf m|=n-2j$ and $m_d\in\{0,1\}$, we get the polynomials
    \begin{equation}
        \label{eq:polynomials-in-the-ball-d-2}
        P_{j,\mathbf m}^n (\mathbf x)=P_j^{(\nu,n-2j+(d-2)/2)}(2\|\mathbf x\|^2-1) \, Y_{\mathbf m}^{n-2j}(\mathbf x).
    \end{equation}
    In spherical polar coordinates:
    \begin{equation}
        \label{eq:polynomials-in-the-ball-d-2-sph}
        P_{j,\mathbf m}^n (\rho,\theta_1,\dots,\theta_{d-1})=P_j^{(\nu,n-2j+(d-2)/2)}(2\rho^2-1) \,\rho^{n-2j}\, g(\theta_1)\, \prod_{k=1}^{d-2}\left[ (\sin\theta_{d-k})^{\beta_k}\, C_{m_k}^{\lambda_k}(\cos \theta_{d-k}) \right],
    \end{equation}
    where $g(\theta_1)=\cos((m_{d-1}+m_d)\theta_1)$ if $m_d=0$ or $g(\theta_1)=\sin((m_{d-1}+m_d)\theta_1)$ if $m_d=1$, $\beta_k=\sum_{i=k+1}^d m_i$, and $\lambda_k = \beta_k + (d-k-1)/2$. \\

    The main purpose of this paper is to extend this constructions to the multiple orthogonal setting. This means, construct multivariate multiple orthogonal polynomials on the unit ball, leveraging the explicit expression \eqref{eq:polynomials-in-the-ball-d}, but substituting the standard Jacobi polynomials by one of their multiple orthogonal analogs: the Jacobi-Pi\~neiro polynomials already introduced in Section~\ref{subsec:JP}. 

    In the following section, we will introduce the simplest and most common case where $d=2$, that is, orthogonal polynomials on the unit disk.

    \subsubsection{Orthogonal polynomials on the Unit Disk}
    \label{sec:OP-disk}

    Fix $d=2$, so that $\mathbf x = (x,y)\equiv (\rho,\theta)$ in polar coordinates \eqref{eq:polar-coordinates}. We denote by 
    \begin{equation}
        \label{eq:disk}
        D =B^2= \{(x,y)\in\RR^2\; : x^2+y^2\leq 1 \},
    \end{equation}
    the unit disk of $\RR^2$  \cite[Section 2.3]{DX14}. Regarding the previous notations in \eqref{eq:weight-ball-d}, \eqref{eq:inner-prod-d}, we consider the following weight function 
    \begin{equation}
        \label{eq:weight-disk}
        W_{\mu}(x,y)=w_\mu (1-x^2-y^2)^\mu,  \qquad \mu>-1,
    \end{equation}
    where $w_\mu=(\mu+1)/\pi$, and its associated integral inner product
    \begin{equation}\label{eq:inner-prod-Cartesian}
        \innerprod{f}{g}_\mu=\int_D f(x,y)g(x,y)W_\mu(x,y)\dxdy, 
    \end{equation}
    for $f,g\in L^2(D;W_\mu)$. 
           
    If we express points on the disk $D$ using their polar coordinates \eqref{eq:polar-coordinates}, and use this change of variables on the inner product $\innerprod{\cdot}{\cdot}_\mu$, then \eqref{eq:inner-prod-Cartesian} might be written as
    \begin{equation}
    \label{eq:inner-prod-polar}
        \innerprod{f}{g}_\mu:=w_\mu\,\int_0^1 \int_0^{2\pi} f(\rho,\theta) g(\rho,\theta)\, r(1-\rho^2)^\mu\dr\theta\,\dr \rho,
    \end{equation}
    where $f(\rho,\theta)\equiv f(r\cos\theta,r\sin\theta)$ and $ g(\rho,\theta)\equiv g(r\cos\theta,r\sin\theta)$ are the functions $f$ and $g$ expressed in polar coordinates.

    Within this setup, we define the following polynomials, which are particular cases of \eqref{eq:polynomials-in-the-ball-d} for $d=2$ using \eqref{eq:harmonics-2-var}. 
    \begin{definition}
        Let $n\geq 0$. For $(x,y)\equiv (\rho,\theta) \in D$, define 
        \begin{equation}
        \label{eq:OP-disk}
        \begin{aligned}
            P^n_{j,1}(\rho,\theta) = P_j^{(n-2j,\mu)}(1-2\rho^2)\rho^{n-2j}\cos((n-2j)\theta), \quad 0\leq j \leq n/2
            \\
            P^n_{j,2}(\rho,\theta) = P_j^{(n-2j,\mu)}(1-2\rho^2)\rho^{n-2j}\sin((n-2j)\theta), \quad 0\leq j < n/2
        \end{aligned}
        \end{equation}
    \end{definition}
    
    It is known \cite[Proposition 2.3.3]{DX14} that $P^n_{j,1}$, $P^n_{j,2}$ are polynomials of degree $n$ in Cartesian coordinates. Moreover, as mentioned before, they form an orthogonal basis of $\mathcal{V}_n^2(W_\mu)$, that is:
    \begin{equation}
        \label{eq:pols-ball}
        \begin{aligned}
        \innerprod{P^n_{j,1}}{P^m_{k,1}}_\mu &= h_{j,1}^n\,\delta_{n,m}\delta_{j,k}, & 0\leq j\leq n/2, \ 0\leq k\leq m/2,\\
        \innerprod{P^n_{j,2}}{P^m_{k,2}}_\mu &= h_{j,2}^n\,\delta_{n,m}\delta_{j,k}, & 0\leq j< n/2, \ 0\leq k< m/2,\\
        \innerprod{P^n_{j,1}}{P^m_{k,2}}_\mu&=0, &0\leq j\leq n/2, \ 0\leq k< m/2,
        \end{aligned}
    \end{equation}
    where the squared norm $h_{j,\nu}^n$is given by
    $$
    h_{j,\nu}^n = \langle P^n_{j,\nu}, P^n_{j,\nu} \rangle_\mu = \dfrac{(\mu+1)}{2^{n-2j+\mu+2}\,\pi}\|P_j^{(n-2j,\mu)}\|^2 \, \|Y_\nu^{n-2j}(1,\theta)\|^2, \qquad \nu=1,2,
    $$
    with:
    \begin{equation}
        \label{eq:norm-jacobi}
        \|P_n^{(\alpha,\beta)}\|^2 = \dfrac{2^{\alpha+\beta+1}}{2n+\alpha+\beta+1}\dfrac{\Gamma(n+\beta+1)}{n! (n+\alpha+1)_{\beta}}
    \end{equation}
    (see \cite[Section 22.2.1]{AS64}), and
    \begin{align}
        \label{eq:norm-harmonics-2-var}
        \|Y_1^{n}(1,\theta)\|^2&= \pi(1+\delta_{n,0}) &
        \|Y_2^{n}(1,\theta)\|^2&= \pi
    \end{align}

    Within this preliminaries, once we studied Jacobi-Pi\~neiro polynomials, spherical harmonics, as well as orthogonal polynomials on the Unit Ball, it is time to gather these definitions and introduce Multiple Orthogonal Polynomials on the Unit Ball.

	\section{Multiple Orthogonal Polynomials on the Unit Ball}
	\label{sec:mop-ball}

    Starting from the theoretical foundations of multiple orthogonality and the properties of orthogonal polynomials on the unit ball, this section aims to integrate the multiple orthogonal properties of Jacobi-Pi\~neiro polynomials with the multivariate orthogonality on the unit ball. Specifically, our construction employs Jacobi-Pi\~neiro polynomials to govern the radial component, while spherical harmonics represent the angular part.

    Firstly, we will describe the case with $2$ variables. By providing an inductive approach to both the definitions and the multiple orthogonality relations, we aim to facilitate a clearer understanding of the generalization to higher-dimensional settings.
    
    \subsection{The bivariate case}\label{subsec:mop-2-vars}

    Let $\mu_1,\dots,\mu_r>-1$ such that $\mu_i-\mu_j\not\in\mathbb Z$ if $i\neq j$ and denote $\vec \mu =(\mu_1,\dots,\mu_r)$, $W_l(x,y) := W_{\mu_l}(x,y)$, and the inner products $\langle\cdot,\cdot\rangle_l := \langle\cdot,\cdot\rangle_{\mu_l}$, $l=1,\dots,r$, where $W_\mu$ is defined in \eqref{eq:weight-disk} and $\langle\cdot,\cdot\rangle_{\mu}$ in \eqref{eq:inner-prod-Cartesian}. In an analogous way as above, we define several functions in polar coordinates, which are polynomials in Cartesian coordinates and satisfy multiple orthogonal properties.
    
    \subsubsection{Type~II MOP on the disk}\label{subsec:type-ii-disk}

    Within the described setting, we will introduce an extension of orthogonal polynomials on the disk satisfying orthogonality relations with respect to the considered weights at the same time, playing a similar role as Type~II MOPs.

    \begin{definition}
    \label{def:MOP-2-vars}
        Let $\vec n = (n_1,\dots,n_r),\, \vec j = (j_1,\dots,j_r)\in\NN_0^r$ be such that $0\leq |\vec j| \leq |\vec n|/2$. Consider the Jacobi-Pi\~neiro Type~II MOP $P^{(|\vec n|-2|\vec j|, \vec \mu)}_{\vec j}(t)$, we define
        \begin{equation}
        \label{eq:mop-disk}
            \begin{array}{c}
            \mathbf P_{\vec j,1}^{\vec n}(\rho,\theta) =P^{(|\vec n|-2|\vec j|, \vec \mu)}_{\vec j}(\rho^2)\, \rho^{|\vec n|-2|\vec j|}\,\cos((|\vec n|-2|\vec j|)\theta), \qquad 0\leq |\vec j| \leq |\vec n|/2,
            \\[2ex]
            \mathbf P_{\vec j,2}^{\vec n}(\rho,\theta) = P^{(|\vec n|-2|\vec j|, \vec \mu)}_{\vec j}(\rho^2)\, \rho^{|\vec n|-2|\vec j|}\,\sin((|\vec n|-2|\vec j|)\theta), \qquad 0\leq |\vec j|< |\vec n|/2.
            \end{array}
        \end{equation}
    \end{definition}
    
    Now, recall from Euler's formula $(x+i\,y)^m=\rho^m(\cos m\theta + i \sin m\theta)$ that, as shown in \cite[Proof of Proposition 2.3.3]{DX14}, the functions $\rho^m \cos m\theta$ and $\rho^m \sin m\theta$ are polynomials of degree $m$ in Cartesian coordinates $(x,y)$. As a consequence, both $\mathbf P_{\vec j,1}^{\vec n}$ and $\mathbf P_{\vec j,2}^{\vec n}$ are polynomials of degree $|\vec n|$ in $(x,y)$.

    \begin{remark}\label{rem:notation-2}
        Observe that, as mentioned above, $\mathbf P_{\vec j,\nu}^{\vec n}(\rho,\theta)=P^{(|\vec n|-2|\vec j|, \vec \mu)}_{\vec j}(\rho^2) \,Y_\nu^{|\vec n|-2|\vec j|}(\theta)$, which bears an analogy with \eqref{eq:polynomials-in-the-ball-d}, substituting Jacobi polynomials by Jacobi-Pi\~neiro MOP. Besides, notice that Jacobi polynomials in \eqref{eq:polynomials-in-the-ball-d} are evaluated in $1-2\rho^2$ as they are orthogonal with respect to a weight supported on $[-1,1]$, whereas Jacobi-Pi\~neiro polynomials in \eqref{eq:mop-disk} -- and below -- are evaluated in $\rho^2$, since the multiple weights in \eqref{eq:JP-wights-1-var} are supported on $[0,1]$. 
    \end{remark}

    Now we present the multiple orthogonal properties satisfied by the polynomials in \eqref{eq:mop-disk}.

    \begin{proposition}\label{prop:orthogonality-2-vars}
        Let $\vec n = (n_1,\dots,n_r)$ and $\vec j = (j_1,\dots,j_r)\in\NN_0^r$ be such that  $0\leq |\vec j| \leq |\vec n|/2$ and consider the polynomials $\mathbf P_{\vec j,\nu}^{\vec n}$, $\nu=1,2$, defined in \eqref{eq:mop-disk}. Then
        \begin{equation}
        \label{eq:multiple-orthogonality-disk}
            \langle\mathbf P_{\vec j,\nu}^{\vec n}(x,y),\, x^a y^b\rangle_l=\int_D \mathbf P_{\vec j,\nu}^{\vec n}(x,y)\, x^a y^b \, W_l(x,y)\dxdy =0 \qquad \text{ if }0\leq a+b< |\vec n|-2|\vec j| + 2j_l,
        \end{equation}
        for $1\leq l\leq r$ and $\nu =1,2$. Moreover, 
        $$
        \langle\mathbf P_{\vec j,\nu}^{\vec n}(x,y),\, x^a y^b\rangle_l=0 \qquad\text{ if }a+b - |\vec n|+ 2|\vec j|\text{ is odd}, \qquad 1\leq l\leq r,  \ \nu =1,2.
        $$
    \end{proposition}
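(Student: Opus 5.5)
The plan is to pass to polar coordinates \eqref{eq:polar-coordinates}, where the integral splits into a radial part and an angular part, and to treat each separately. Write $m:=|\vec n|-2|\vec j|\geq 0$, so that by \eqref{eq:mop-disk} $\mathbf P_{\vec j,\nu}^{\vec n}(\rho,\theta)=P^{(m,\vec\mu)}_{\vec j}(\rho^2)\,\rho^m\,Y_\nu^m(1,\theta)$. In polar coordinates $x^ay^b=\rho^{a+b}\cos^a\theta\,\sin^b\theta$. Using \eqref{eq:inner-prod-polar} with $\mu=\mu_l$, the inner product factorizes as
\begin{equation*}
\langle\mathbf P_{\vec j,\nu}^{\vec n},x^ay^b\rangle_l = w_{\mu_l}\left(\int_0^1 P^{(m,\vec\mu)}_{\vec j}(\rho^2)\,\rho^{m+a+b+1}(1-\rho^2)^{\mu_l}\,\dr\rho\right)\left(\int_0^{2\pi} Y_\nu^m(1,\theta)\cos^a\theta\sin^b\theta\,\dr\theta\right).
\end{equation*}
It therefore suffices to show that one of the two factors vanishes in each case covered by the statement.

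\emph{Angular factor.} Writing $\cos\theta=(e^{i\theta}+e^{-i\theta})/2$ and $\sin\theta=(e^{i\theta}-e^{-i\theta})/(2i)$ and expanding, $\cos^a\theta\sin^b\theta$ is a linear combination of $e^{ip\theta}$ with $p\in\{a+b,\,a+b-2,\dots,-(a+b)\}$. Since it is real, it is a combination of $\cos(p\theta)$ and $\sin(p\theta)$ for $0\leq p\leq a+b$ with $p\equiv a+b \pmod 2$. By the orthogonality of $\cos(m\theta)$ and $\sin(m\theta)$ on $[0,2\pi]$ (that is, \eqref{eq:orthogonality-harmonics} for $d=2$, via \eqref{eq:harmonics-2-var-circle}), the angular factor vanishes whenever $a+b<m$ or $a+b-m$ is odd. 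This already gives the second claim, and it gives \eqref{eq:multiple-orthogonality-disk} in the range $a+b<m$.

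\emph{Radial factor.} The remaining case is $m\leq a+b<m+2j_l$ with $a+b-m=2k$ even, so $0\leq k\leq j_l-1$. Substitute $t=\rho^2$, $\dr t=2\rho\,\dr\rho$. Since $\rho^{m+a+b}=t^{(m+a+b)/2}=t^{m+k}$, the radial integral becomes
\begin{equation*}
\frac12\int_0^1 P^{(m,\vec\mu)}_{\vec j}(t)\,t^k\,t^{m}(1-t)^{\mu_l}\,\dr t .
\end{equation*}
This is exactly the Jacobi--Piñeiro inner product with $\alpha=m$ and $\gamma_l=\mu_l$, for the weights \eqref{eq:JP-wights-1-var}. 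It vanishes by \eqref{eq:multiple-orthogonality-JP-type-ii}, because $k\leq j_l-1$. The hypotheses $\mu_l>-1$ and $\mu_i-\mu_j\notin\mathbb Z$ guarantee that these Jacobi--Piñeiro polynomials exist, and $\alpha=m\geq0>-1$.

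The main point of care is the angular step: one must confirm that the frequencies in $\cos^a\theta\sin^b\theta$ are bounded by $a+b$ and have the parity of $a+b$. One must also confirm that the case $m=0$ (constant $Y_1^0$, and $\nu=2$ not defined) fits the same argument. The rest is bookkeeping of exponents in the substitution $t=\rho^2$, which is what makes the radial parameter $\alpha$ equal to $m$.
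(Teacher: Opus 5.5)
Your proposal is correct and follows the paper's proof essentially step for step. You pass to polar coordinates and factor into radial and angular integrals. The angular factor is killed by the frequency/parity argument, which is exactly the content of the paper's Lemma~\ref{lemma:trig-integrals}. The radial factor is reduced via $t=\rho^2$ (with $a+b=m+2k$ giving $t^{m+k}$) to Jacobi--Pi\~neiro Type~II orthogonality with $\alpha=|\vec n|-2|\vec j|$, and your handling of the degenerate case $m=0$ is also fine.
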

    
    \begin{proof}
        From the definition of $\mathbf P_{\vec j,\nu}^{\vec n}$  for $\nu=1,2$ given in  \eqref{eq:mop-disk}, 
        and using \eqref{eq:inner-prod-polar} and \eqref{eq:polar-coordinates}, the integral in \eqref{eq:multiple-orthogonality-disk} expressed in polar coordinates reads
        $$
        I= w_{\mu_l} \int_0^{2\pi}\int_0^1 P^{(|\vec n|-2|\vec j|, \vec \mu)}_{\vec j}(\rho^2)\, \rho^{|\vec n|-2|\vec j|}\,\cos((|\vec n|-2|\vec j|)\theta) \, \rho^{a+b+1} \,\cos^a\theta\, \sin^b\theta\, (1-\rho^2)^{\mu_l}\drdth, 
        $$
        if $\nu=1$, and 
        $$
        I =  w_{\mu_l} \int_0^{2\pi}\int_0^1 P^{(|\vec n|-2|\vec j|, \vec \mu)}_{\vec j}(\rho^2)\, \rho^{|\vec n|-2|\vec j|}\,\sin((|\vec n|-2|\vec j|)\theta) \, \rho^{a+b+1} \,\cos^a\theta\, \sin^b\theta\, (1-\rho^2)^{\mu_l}\drdth
        $$
        if $\nu=2$.
        
        This integral might be split into the product of two univariate integrals: one depending on $\rho$ (radial part), and another one depending on $\theta$ (angular part). As a consequence, $I=w_{\mu_l}\, I_1\times I_2$ where
        \begin{align*}
            I_1 &:= \int_0^1 P^{(|\vec n|-2|\vec j|, \vec \mu)}_{\vec j}(\rho^2)\, \rho^{|\vec n|-2|\vec j| + a+b+1}\,(1-\rho^2)^{\mu_l}\, \dr \rho \\
            I_2 &:= \begin{cases}
                \ds\int_0^{2\pi}  \cos((|\vec n|-2|\vec j|)\theta)\,\cos^a\theta\,\sin^b\,\dr\theta & \text{ if } \nu=1,\\
                \ds\int_0^{2\pi}  \sin((|\vec n|-2|\vec j|)\theta)\,\cos^a\theta\,\sin^b\,\dr\theta & \text{ if } \nu=2.
            \end{cases}
        \end{align*}
        If $a+b<|\vec n|-2|\vec j|$ or $a+b=|\vec n|-2|\vec j|+\ell$ with $\ell$ an odd integer, by Lemma~\ref{lemma:trig-integrals} (in Appendix~\ref{sec:appendix}), $I_2$ vanishes in both cases and, in turn, $I$ vanishes as well, giving us the second part of the theorem.

        Now assume $a+b = |\vec n|-2|\vec j|+\ell $ with $\ell$ an even integer. We will focus on $I_1$. Using the change of variable $t=\rho^2$, $\dr t = 2\rho\,\dr \rho$, $I_1$ can be rewritten as
        $$
        I_1 = \frac 1 2 \int_0^1 P^{(|\vec n|-2|\vec j|, \vec \mu)}_{\vec j}(t)t^{|\vec n|-2|\vec j|+\ell/2}(1-t)^{\mu_l}\,\dr t.
        $$
        Using the multiple orthogonality of $P^{(|\vec n|-2|\vec j|, \vec \mu)}_{\vec j}$ (see \eqref{eq:multiple-orthogonality-JP-type-ii}), we get $I_1=0$ if $\ell/2< j_l$, that is, if $a+b<  |\vec n|-2|\vec j| + 2 j_l$, $1\leq l\leq r$, which completes the proof.
                
    \end{proof}

    Once the definition and the properties of the Type~II MOP on the disk are established, we present a clarifying example.

        \begin{example}
        For simplicity, in this and the following examples we will consider only $r=2$ measures. As an example, let $\vec n = (2,4)$, $\vec j = (0,1)$, so that $M=|\vec n|-2|\vec j| = 4$. Polynomials $\mathbf P^{(2,4)}_{(0,1),1}(x,y)$ and $\mathbf P^{(2,4)}_{(0,1),2}(x,y)$ have both degree $|\vec n|=6$ and are orthogonal to every polynomial of degree lower than $M+2j_1 = 4$ (resp. $M+2j_2 = 6$)  with respect to the first measure (resp. second). This means, for a bivariate polynomial $q(x,y)$:
        \begin{align*}
            \langle \mathbf P^{(2,4)}_{(0,1),\nu}, q\rangle_1 &= 0  \text{ if }\deg(q)<4, & 
            \langle \mathbf P^{(2,4)}_{(0,1),\nu}, q\rangle_2 &= 0  \text{ if }\deg(q)<6.             
        \end{align*}
        Moreover,
        $$
        \langle \mathbf P^{(2,4)}_{(0,1),\nu}, q\rangle_l = 0  \text{ if }\deg(q)=4+(2k+1), \quad k\in\mathbb N_0, \quad \nu=1,2, \quad 1\leq j\leq r.
        $$
        
        Since the Jacobi-Pi\~neiro polynomial $P^{(4,(\mu_1,\mu_2))}_{(0,1)}(t)=5-(\mu_2+6) t$, the expressions in polar coordinates of these polynomials are
        \begin{align*}
            \mathbf P^{(2,4)}_{(0,1),1}(\rho,\theta) =  \rho ^4  \left(5-(\mu_2+6) \rho ^2\right)\, \cos (4 \theta )\\
            \mathbf P^{(2,4)}_{(0,1),2}(\rho,\theta) =  \rho ^4  \left(5-(\mu_2+6) \rho ^2\right)\, \sin (4 \theta ).
        \end{align*}
        In Cartesian coordinates these expressions become
        \begin{align*}
            \mathbf P^{(2,4)}_{(0,1),1}(x,y)&=\left(-x^4+6 x^2 y^2-y^4\right) \left((\mu_2+6) \left(x^2+y^2\right)-5\right) \\
            \mathbf P^{(2,4)}_{(0,1),2}(x,y)&=4 x y (y-x) (x+y) \left((\mu_2+6) \left(x^2+y^2\right)-5\right).
        \end{align*}
    \end{example}

    So far, by substituting Jacobi polynomials by Type~II Jacobi-Pi\~neiro polynomials we introduced Type~II MOP on the disk. However, there is no reason to limit the extension to Type~II MOP, as shown in the next section.

    \subsubsection{Type~I MOP on the disk}\label{subsubsec:type-i-disk}

    Following the same approach used for Type~II MOPs, we construct an extension of Type~I MOPs preserving the structure of \eqref{eq:polynomials-in-the-ball-d} and based on Type~I Jacobi-Pi\~neiro polynomials.
    
    \begin{definition}\label{def:type-i-2-vars}
        Let $\vec n = (n_1,\dots,n_r)$ and $ \vec j = (j_1,\dots,j_r)\in\NN_0^r$ be such that $0\leq |\vec j| \leq |\vec n|/2$. Consider Type~I Jacobi-Pi\~neiro MOPs $\left(A^{(|\vec n|-2|\vec j|, \vec \mu)}_{\vec j,1}(t),\dots,A^{(|\vec n|-2|\vec j|, \vec \mu)}_{\vec j,r}(t)\right)$. We define the bivariate polynomials $\left(\mathbf A_{\vec j,\nu,(1)}^{\vec n}, \dots, \mathbf A_{\vec j,\nu,(r)}^{\vec n}\right)$ given by 
        \begin{equation}
            \label{eq:Type-I-MOP-2-vars}
            \begin{aligned}
                \mathbf A_{\vec j,1,(l)}^{\vec n}(\rho,\theta)= A^{(|\vec n|-2|\vec j|, \vec \mu)}_{\vec j,l}(\rho^2)\, \rho^{|\vec n|-2|\vec j|}\, \cos((|\vec n|-2|\vec j|)\theta), \qquad 0\leq |\vec j| \leq |\vec n|/2,\\
                \mathbf A_{\vec j,2,(l)}^{\vec n}(\rho,\theta)= A^{(|\vec n|-2|\vec j|, \vec \mu)}_{\vec j,l}(\rho^2)\, \rho^{|\vec n|-2|\vec j|}\, \sin((|\vec n|-2|\vec j|)\theta), \qquad 0\leq |\vec j| < |\vec n|/2,\\
            \end{aligned}
        \end{equation}
        for $1\leq l\leq r$.
    \end{definition}
    
    \begin{remark}\label{remark:degree-type-I}
    Taking into account \eqref{eq:Type-I-MOP-2-vars} and since the univariate Type~I MOP $A^{(|\vec n|-2|\vec j|, \vec \mu)}_{\vec j,l}(t)$ have degree at most $j_l-1$, the degree of the bivariate Type~I MOP $\mathbf A_{\vec j,\nu,(l)}^{\vec n}(\mathbf x)$ is at most $|\vec n|-2(|\vec j|-j_l+1)$.
    \end{remark}

    Observe that the structures of the polynomials $\mathbf A_{\vec j,\nu,(l)}^{\vec n}$ and $\mathbf P_{\vec j,\nu}^{\vec n}$ are analogous, with the Type~II Jacobi-Pi\~neiro polynomial replaced by Type~I MOPs. Consequently, the orthogonality of the angular part will remains unchanged, while the main differences in the orthogonality relations arise from the radial part.

    \begin{proposition}\label{prop:Type-I-MOP-orthogonality}
        Let $\vec n = (n_1,\dots,n_r)$ and $ \vec j = (j_1,\dots,j_r)\in\NN_0^r$ be such that $0\leq |\vec j| \leq |\vec n|/2$ and consider the Type~I polynomials $\left(\mathbf A_{\vec j,\nu,(1)}^{\vec n}, \dots, \mathbf A_{\vec j,\nu,(r)}^{\vec n}\right)$  defined in \eqref{eq:Type-I-MOP-2-vars}. Then
        \begin{equation}
            \sum_{l=1}^r\langle  \mathbf A_{\vec j,\nu,(l)}^{\vec n}(x,y) , x^a y^b\rangle_l=  \int_D \sum_{l=1}^r \mathbf A_{\vec j,\nu,(l)}^{\vec n}(x,y) \, x^a y^b \, W_l(x,y) \dxdy =0 \quad \text{ if }  0\leq a+b< |\vec n|-2 .
        \end{equation}
        Moreover,
        $$
        \sum_{l=1}^r\langle  \mathbf A_{\vec j,\nu,(l)}^{\vec n}(x,y) , x^a y^b\rangle_l=  0 \quad\text{ if } a+b-|\vec n|+2|\vec j|\text{ is odd}, \quad \nu=1,2.
        $$
    \end{proposition}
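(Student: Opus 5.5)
The plan is to follow the proof of Proposition~\ref{prop:orthogonality-2-vars} closely. The angular factor $\rho^{M}\cos(M\theta)$ or $\rho^M\sin(M\theta)$, with $M:=|\vec n|-2|\vec j|$, is common to all components $\mathbf A_{\vec j,\nu,(l)}^{\vec n}$, $l=1,\dots,r$. So the angular integral can be factored out of the whole sum, and only the radial integrals differ from one $l$ to another.

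First I pass to polar coordinates using \eqref{eq:inner-prod-polar}. For each $l$ the integral splits exactly as in the Type~II case, giving $w_{\mu_l}\,I_{1,l}\,I_2$, where
\[
I_{1,l}=\int_0^1 A^{(M,\vec\mu)}_{\vec j,l}(\rho^2)\,\rho^{M+a+b+1}(1-\rho^2)^{\mu_l}\,\dr\rho ,
\]
and $I_2$ is the same trigonometric integral as before. Crucially, $I_2$ does not depend on $l$. By Lemma~\ref{lemma:trig-integrals}, $I_2=0$ whenever $a+b<M$ or $a+b-M$ is odd. This immediately yields the second (parity) statement, and it also shows the first statement in the range $a+b<M$.

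It remains to treat $a+b=M+\ell$ with $\ell\geq 0$ even. The substitution $t=\rho^2$ turns $I_{1,l}$ into
\[
I_{1,l}=\tfrac12\int_0^1 A^{(M,\vec\mu)}_{\vec j,l}(t)\,t^{\ell/2}\,t^{M}(1-t)^{\mu_l}\,\dr t ,
\]
so that $\sum_l I_{1,l}=\tfrac12\sum_l\langle A^{(M,\vec\mu)}_{\vec j,l},t^{\ell/2}\rangle_l$, computed with the Jacobi--Pi\~neiro weights \eqref{eq:JP-wights-1-var} with $\alpha=M$ and $\gamma_l=\mu_l$. By the Type~I relations \eqref{eq:multiple-orthogonality-JP-type-i} for the multi-index $\vec j$, this sum vanishes for $\ell/2\leq|\vec j|-2$. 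In terms of $a+b$ this condition reads $a+b\leq M+2|\vec j|-4=|\vec n|-4$. Since $a+b\equiv |\vec n| \pmod 2$ in this case, it is equivalent to $a+b<|\vec n|-2$, which is the claimed range.

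The main obstacle is the normalization constants $w_{\mu_l}$. They differ for different $l$ and sit in front of each summand, so the total is really $I_2\sum_l w_{\mu_l}I_{1,l}$ rather than $I_2\sum_l I_{1,l}$. To deal with this I would first check whether the relation is intended for the unnormalized weights $(1-x^2-y^2)^{\mu_l}$; in that case the argument above goes through verbatim. Otherwise I would absorb the constants by applying the univariate Type~I relations to the rescaled vector $\bigl(A_{\vec j,l}/w_{\mu_l}\bigr)_l$, equivalently by redefining the univariate weights as $w_{\mu_l}t^M(1-t)^{\mu_l}$. Rescaling a weight by a positive constant preserves the AT-system property, so these Type~I polynomials exist and the same computation applies. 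I would state this convention explicitly at that point of the proof.
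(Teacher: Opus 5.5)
Your proof is correct and follows the paper's argument. Factor out the $l$-independent angular integral, and dispose of the cases $a+b<M$ and $a+b-M$ odd with Lemma~\ref{lemma:trig-integrals}. Otherwise, substitute $t=\rho^2$ and apply the univariate Type~I relations for $\vec j$ with $k=\ell/2\leq|\vec j|-2$.

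Your concern about the constants $w_{\mu_l}$ is genuine. With the unscaled Jacobi--Pi\~neiro Type~I polynomials and the normalized $W_l$, the case $\vec n=(2,2)$, $\vec j=(1,1)$, $a=b=0$ already fails: the constant Type~I polynomials $c_1,c_2$ give $c_1+c_2=-(\mu_1+2)(\mu_2+2)\neq0$. The paper's proof resolves this exactly as in your second option, by invoking Type~I orthogonality with respect to the weights $w_{\mu_l}t^{M}(1-t)^{\mu_l}$.
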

    
    \begin{proof}

    Using polar coordinates, \eqref{eq:Type-I-MOP-2-vars}, and the linearity of the integral operator, we rewrite the target integral as
    \begin{equation}
    \label{eq:integral-type-I-split}
        \begin{aligned}
        & \sum_{l=1}^r w_{\mu_l}\int_0^1\int_0^{2\pi} A^{(|\vec n|-2|\vec j|, \vec \mu)}_{\vec j,l}(\rho^2)\, \rho^{|\vec n|-2|\vec j|+a+b+1}\, Y_\nu(1, \theta)\,\cos^a\theta\,\sin^b\theta\, (1-\rho^2)^{\mu_l}\,\dr\theta\,\dr\rho\\
        &= \left(\int_{0}^{2\pi}\cos^a\theta\,\sin^b\theta \, Y_\nu(1, \theta)\,\dr\theta \right)\sum_{l=1}^r w_{\mu_l}\int_0^1 A^{(|\vec n|-2|\vec j|, \vec \mu)}_{\vec j,l}(\rho^2)\, \rho^{|\vec n|-2|\vec j|+a+b+1}\, (1-\rho^2)^{\mu_l}\,\dr\rho.
        \end{aligned}
    \end{equation}
    where $Y_\nu(1,\theta)=\cos((|\vec n|-2|\vec j|)\theta)$ if $\nu=1$, and $Y_\nu(1,\theta)=\sin((|\vec n|-2|\vec j|)\theta)$ if $\nu=2$. As described in the proof of Proposition~\ref{prop:orthogonality-2-vars}, we use Lemma~\ref{lemma:trig-integrals}, in Appendix~\ref{sec:appendix}, to deduce that the angular integral vanishes if $a+b<|\vec n|-2|\vec j|$ and also if $a+b=|\vec n|-2|\vec j|+\ell$ with $\ell$ an odd positive integer. 

    If $a+b=|\vec n|-2|\vec j|+\ell$ with $\ell$ even, let us focus on the radial sum of integrals, where we apply the change of variables $t = \rho^2$, $\dr t = 2\rho\,\dr \rho$ and obtain
    \begin{equation}
        \label{eq:integral-type-I}
        \dfrac 1 2 \sum_{l=1}^r w_{\mu_l}\int_0^1 A^{(|\vec n|-2|\vec j|, \vec \mu)}_{\vec j,l}(t)\, t^{|\vec n|-2|\vec j| + \ell/2}\, (1-t)^{\mu_l}\,\dr t.
    \end{equation}
    Now, using the Type~I multiple orthogonality of the Jacobi-Pi\~neiro MOP $A^{(|\vec n|-2|\vec j|, \vec \mu)}_{\vec j,l}(t)$ with respect to the weights $w_{\mu_l}t^{|\vec n|-2|\vec j|}\, (1-t^2)^{\mu_l}$, we get that this sum is zero if $\ell/2<|\vec j|-1$. Equivalently, the integral vanishes whenever $a+b<|\vec n|-2$.
    \end{proof}

    As a consequence of the standard normalization of Type~I MOP, we might estimate the value of the integral in the case $a+b=|\vec n|-2$, which is given in the following result, whose proof is straightforward.
    
    \begin{corollary} In the setting of Proposition~\ref{prop:Type-I-MOP-orthogonality}, assume $a+b=|\vec n|-2$, then
    $$
    \int_D \sum_{l=1}^r \mathbf A_{\vec j,\nu,(l)}^{\vec n}(x,y) \, x^a y^b \, W_l(x,y) \dxdy = \begin{cases}
        0 &  \text{if } |\vec n| \text{ is even}\\
        \frac 1 2 \int_{0}^{2\pi}\cos^a\theta\,\sin^b\theta \, Y_\nu(1, \theta)\,\dr\theta & \text{if }|\vec n|\text{ is odd }
    \end{cases},
    $$
    where the expressions for the second case are in  \eqref{eq:lemma-linear-combination-cos} and \eqref{eq:lemma-linear-combination-sin}.
    \end{corollary}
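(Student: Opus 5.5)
The plan is to reuse the factorization \eqref{eq:integral-type-I-split} from the proof of Proposition~\ref{prop:Type-I-MOP-orthogonality}. After passing to polar coordinates, the integral in the statement equals the product of the angular factor $\int_0^{2\pi}\cos^a\theta\,\sin^b\theta\,Y_\nu(1,\theta)\,\dr\theta$ and the radial sum
$$\sum_{l=1}^r w_{\mu_l}\int_0^1 A^{(|\vec n|-2|\vec j|,\vec\mu)}_{\vec j,l}(\rho^2)\,\rho^{|\vec n|-2|\vec j|+a+b+1}(1-\rho^2)^{\mu_l}\,\dr\rho .$$
Each factor will be evaluated separately, with $a+b=|\vec n|-2$ substituted throughout.

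For the radial factor, I write $a+b=|\vec n|-2|\vec j|+\ell$. Then $\ell=2|\vec j|-2$, which is even, so $\ell/2=|\vec j|-1$. The substitution $t=\rho^2$ turns the radial sum into \eqref{eq:integral-type-I}, namely
$$\tfrac12\sum_{l}w_{\mu_l}\int_0^1 A_{\vec j,l}(t)\,t^{|\vec j|-1}\,t^{|\vec n|-2|\vec j|}(1-t)^{\mu_l}\,\dr t .$$
This is exactly the top moment $k=|\vec j|-1$ in the Type~I relation \eqref{eq:multiple-orthogonality-JP-type-i}. I take the weights in that relation to include the constants $w_{\mu_l}$, as is done in the proof of the proposition. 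The normalization then gives the value $1$, so the radial factor equals $\tfrac12$.

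For the angular factor, I will invoke Lemma~\ref{lemma:trig-integrals} from the appendix. It shows that the factor vanishes when $a+b<|\vec n|-2|\vec j|$ (which, since $a+b=|\vec n|-2$, happens exactly when $\vec j=\vec 0$) or when $a+b-(|\vec n|-2|\vec j|)$ is odd. Otherwise its value is the explicit linear combination recorded in \eqref{eq:lemma-linear-combination-cos} and \eqref{eq:lemma-linear-combination-sin}. Multiplying the two factors gives $\tfrac12\int_0^{2\pi}\cos^a\theta\,\sin^b\theta\,Y_\nu(1,\theta)\,\dr\theta$ whenever the angular integral survives.

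The main obstacle is matching the vanishing case of the lemma with the parity-of-$|\vec n|$ dichotomy in the statement. Since $a+b-|\vec n|+2|\vec j|=2|\vec j|-2$ is always even, the zero case must come from the parity conditions inside Lemma~\ref{lemma:trig-integrals}: the parities of $a$ and $b$ relative to $|\vec n|-2|\vec j|$, and the cosine/sine distinction. I would first check directly from \eqref{eq:lemma-linear-combination-cos} and \eqref{eq:lemma-linear-combination-sin} when the angular integral is forced to zero for $a+b=|\vec n|-2$. If this check does not reproduce the $|\vec n|$-parity split, I would state the corollary in the form "the integral equals $\tfrac12$ times the angular integral, which is zero in the cases of Lemma~\ref{lemma:trig-integrals}", since that is what the argument actually establishes.
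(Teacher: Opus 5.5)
Your reduction is the paper's own argument, and it is correct. Factor via \eqref{eq:integral-type-I-split}. With $a+b=|\vec n|-2$ you get $\ell=2|\vec j|-2$, so the radial sum is the top moment $k=|\vec j|-1$ of the Type~I relation and contributes $\tfrac12$. This leaves $\tfrac12\int_0^{2\pi}\cos^a\theta\,\sin^b\theta\,Y_\nu(1,\theta)\,\dr\theta$. For $\vec j=\vec 0$ there is no normalization moment, but then every $A_{\vec j,l}\equiv 0$ and the angular factor also vanishes, so the formula still holds.

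The step you flag as the main obstacle cannot be completed, because the first case of the statement is false. Put $M=|\vec n|-2|\vec j|$. Then $a+b-M=2|\vec j|-2$, which is even and nonnegative whenever $\vec j\neq\vec 0$. So neither hypothesis of Lemma~\ref{lemma:trig-integrals} ever holds, and the paper's one-line appeal to that lemma does not yield an $|\vec n|$-parity split either.

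Here is a counterexample. Take $r=2$, $\vec n=(1,1)$, $\vec j=(1,0)$, $\nu=1$, $a=b=0$. Then $M=0$ and $\mathbf A_{\vec j,1,(2)}\equiv 0$. Also $\mathbf A_{\vec j,1,(1)}\equiv c$, where $c\,w_{\mu_1}\int_0^1(1-t)^{\mu_1}\,\dr t=1$, i.e. $c=\pi$. Since $W_1$ integrates to $1$ over $D$, the integral equals $\pi\neq 0$, even though $|\vec n|=2$ is even. This agrees with your formula, since $\pi=\tfrac12\int_0^{2\pi}\dr\theta$.

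The failure is not special to $M=0$. For $\vec n=(2,2)$, $\vec j=(1,0)$, $\nu=1$, $a=2$, $b=0$, the value is $\tfrac12\int_0^{2\pi}\cos 2\theta\,\cos^2\theta\,\dr\theta=\pi/4$.

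So your fallback is the correct statement. You should commit to it and record the counterexample, rather than leaving the check open. In this regime, what actually forces the angular factor to vanish is the symmetry $\theta\mapsto-\theta$. It is zero when $\nu=1$ and $b$ is odd, or when $\nu=2$ and $b$ is even. Otherwise it is given by \eqref{eq:lemma-linear-combination-cos}--\eqref{eq:lemma-linear-combination-sin} and is in general nonzero, whatever the parity of $|\vec n|$.
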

    \begin{proof}
        Owing to the multiple orthogonality properties of Type~I Jacobi-Pi\~neiro MOP, the value of \eqref{eq:integral-type-I} is $1$ if $\ell/2=|\vec j|-1$, this means, $a+b=|\vec n|-2$. In that case, the angular part in \eqref{eq:integral-type-I-split} gives us the value of the integral, and using Lemma~\ref{lemma:trig-integrals} the result holds.
    \end{proof}
    
    Next, we present an example where Type~I polynomials for certain $\vec n$ and $\vec j$ are explicitly computed.

    \begin{example}
        In order to get more readable expressions, in this example, we will set the values  $\mu_1=1$, $\mu_2 = 1/2$. We choose $\vec n = (4,5)$, $\vec j =(2,2)$. With these chosen values, both polynomials $\mathbf A_{(2,2),\nu,(1)}^{(4,5)}$ and $\mathbf A_{(2,2),\nu,(2)}^{(4,5)}(x,y)$ have degree $|\vec n|-2(|\vec j|-j_1+1)=|\vec n|-2(|\vec j|-j_2+1)=3$ and satisfy
        $$
        \langle \mathbf A_{(2,2),\nu,(1)}^{(4,5)}, q\rangle_1 + 
        \langle \mathbf A_{(2,2),\nu,(2)}^{(4,5)}, q\rangle_2 =0 \quad\text{ if } \deg(q)<|\vec n|-2 = 7 \text{ or } \deg(q)=1+(2k+1),\ k\in\mathbb N_0,
        $$
        for any bivariate polynomial $q(x,y)$. The Type~I Jacobi-Pi\~neiro polynomials are
        \begin{align*}
            A^{(|\vec n|-2|\vec j|,(\mu_1,\mu_2))}_{(2,2),1} &= 30030 (t-2),& 
            A^{(|\vec n|-2|\vec j|,(\mu_1,\mu_2))}_{(2,2),2} &=-\frac{315315}{64} (11 t-12),
        \end{align*}
        which, according to Definition~\ref{def:type-i-2-vars}, lead to the following bivariate Type~I MOPs expressed in polar coordinates
        \begin{align*}
            \mathbf A_{(2,2),1,(1)}^{(4,5)}(\rho,\theta)&=30030 \rho  \left(\rho ^2-2\right) \cos (\theta ),& \mathbf A_{(2,2),1,(2)}^{(4,5)}(\rho,\theta)&=- \frac{315315}{64}\rho  \left(11 \rho ^2-12\right) \cos (\theta ),\\
            \mathbf A_{(2,2),2,(1)}^{(4,5)}(\rho,\theta)&=30030 \rho  \left(\rho ^2-2\right) \sin (\theta ),& \mathbf A_{(2,2),2,(2)}^{(4,5)}(\rho,\theta)&=- \frac{315315}{64}\rho  \left(11 \rho ^2-12\right) \sin (\theta ).
        \end{align*}
        In Cartesian coordinates they become
        \begin{align*}
            \mathbf A_{(2,2),1,(1)}^{(4,5)}(x,y)&=30030 x \left(x^2+y^2-2\right),& \mathbf A_{(2,2),1,(2)}^{(4,5)}(x,y)&=-\frac{315315}{64}  x \left(11 x^2+11 y^2-12\right)\\
            \mathbf A_{(2,2),2,(1)}^{(4,5)}(x,y)&=30030 y \left(x^2+y^2-2\right),& \mathbf A_{(2,2),2,(2)}^{(4,5)}(x,y)&=-\frac{315315}{64} y \left(11 x^2+11 y^2-12\right).
        \end{align*}
        
    \end{example}
    
    In the next section, we will extend these definitions and results to the the general case where $d\geq 2$ variables are considered.

    \subsection{The $d$ variables case}

    We move on to the case concerning an arbitrary number $d$ of variables. Let $\mu_1,\dots,\mu_r>-1$ such that $\mu_i-\mu_j\not\in\mathbb Z$ if $i\neq j$ and denote $\vec \mu =(\mu_1,\dots,\mu_r)$. Consider the weights $W_l(\mathbf x):=W_{\mu_l}(\mathbf x)$ introduced in \eqref{eq:weight-ball-d} and their associated inner products $\langle\cdot,\cdot\rangle_l := \langle\cdot,\cdot\rangle_{\mu_l}$ defined in \eqref{eq:inner-prod-d}, $1\leq l\leq r$. In an analogous way as studied in the 2D disk, we will define $d$-variate functions, which are polynomials satisfying both Type~I and Type~II multiple orthogonal relations with respect to the weights $W_1,\dots,W_r$.

    \subsubsection{Type~II MOP on the multidimensional ball}

    We begin by introducing, following the same structure as in Section~\ref{subsec:mop-2-vars}, a construction that will be shown to be an extension of the Type~II MOP in the $d$-dimensional ball.

    \begin{definition}
        Let $\vec n = (n_1,\dots,n_r)$ and $ \vec j = (j_1,\dots,j_r)\in\NN_0^r$ be such that $0\leq |\vec j| \leq |\vec n|/2$. Let us define $M:=|\vec n|-2|\vec j|$ and consider the Jacobi-Pi\~neiro MOP $P_{\vec j}^{(M+(d-2)/2,\vec \mu)}(t)$. Now let $\mathbf m =(m_1,\dots,m_d)\in\NN^d_0$ be such that $|\mathbf m|=M$, $m_d\in\{0,1\}$ and consider the spherical harmonic $Y_{\mathbf m}^{M}(\mathbf x)$ introduced in \eqref{eq:sph-harmonics-d}. We define
        \begin{equation}
            \label{eq:MOP-d-variables}
            \mathbf{P}_{\vec j,\mathbf m}^{\vec n}(\mathbf x) = P_{\vec j}^{(M+(d-2)/2,\vec \mu)}(\rho^2)\,Y_{\mathbf m}^{M}(\rho,\theta_1,\dots,\theta_{d-1}), \qquad
        \end{equation}
        where $\mathbf x=(x_1,\dots,x_d)\equiv(\rho,\theta_1,\dots,\theta_{d-1})\in B^d$.
    \end{definition}

    Once again, this function, when expressed in Cartesian coordinates $(x_1,\dots,x_d)$, is a polynomial of $d$ variables of degree exactly $|\vec n|$. Using an inductive argument, we prove that the polynomial $\mathbf{P}_{\vec j,\mathbf m}^{\vec n}$ is a $d$-variate extension of its univariate counterparts.

    \begin{proposition}\label{prop:type-ii-orthogonality-d-vars}  Let $\vec n = (n_1,\dots,n_r)$ and $\vec j = (j_1,\dots,j_r)\in\NN_0^r$ be such that $0\leq |\vec j| \leq |\vec n|/2$. Let $\mathbf a=(a_1,\dots,a_d), \,\mathbf m =(m_1,\dots,m_d)\in\NN^d_0$ be such that $|\mathbf m|=M=|\vec n|-2|\vec j|$, $m_d\in\{0,1\}$. Consider the polynomial $\mathbf{P}_{\vec j,\mathbf m}^{\vec n}(\mathbf x)$ in \eqref{eq:MOP-d-variables} and denote an arbitrary $d$-variate monomial as $\mathbf x^{\mathbf a} = x_1^{a_1}\cdots x_d^{a_d}$. Then
    \begin{equation}
        \label{eq:multiple-orthogonality-d}
        \langle\mathbf{P}_{\vec j,\mathbf m}^{\vec n}(\mathbf x), \mathbf x^{\mathbf a}\rangle_l =\int_{B^d} \mathbf{P}_{\vec j,\mathbf m}^{\vec n}(\mathbf x)\, \mathbf x^{\mathbf a} \,W_l(\mathbf x)\, \dr\mathbf x = 0\quad \text{ if }0\leq|\mathbf a|< M +2j_l, 
    \end{equation}
    for $l=1,\dots,r$. Moreover, \eqref{eq:multiple-orthogonality-d} also holds if $a_{d-k}+\cdots+a_{d} < m_{d-k}+\cdots + m_{d}$, or $a_{d-k}+\cdots+a_{d} -( m_{d-k}+\cdots + m_{d})$ is an odd positive integer, $k=1,\cdots,d$.
    
    \end{proposition}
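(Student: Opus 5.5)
We pass to spherical polar coordinates and separate the integral into a radial factor and an angular factor, as in the proof of Proposition~\ref{prop:orthogonality-2-vars}. Writing $\mathbf x^{\mathbf a}=\rho^{|\mathbf a|}\,\Theta_{\mathbf a}(\theta_1,\dots,\theta_{d-1})$, where $\Theta_{\mathbf a}$ is the product of the corresponding powers of $\cos\theta_k,\sin\theta_k$ read off from \eqref{eq:sph-coordinates-d}, and using the Jacobian \eqref{eq:jacobian-sph-pol-coordinates}, the integral becomes $w_{\mu_l}\,I_1\cdot I_2$. Here
$$
I_1=\int_0^1 P_{\vec j}^{(M+(d-2)/2,\vec\mu)}(\rho^2)\,\rho^{M+|\mathbf a|+d-1}(1-\rho^2)^{\mu_l}\,\dr\rho,
\qquad
I_2=\int_{\mathbb S^{d-1}} Y^M_{\mathbf m}(\xi)\,\xi^{\mathbf a}\,\dr\sigma(\xi).
$$
It suffices to show that $I_1=0$ or $I_2=0$ in each claimed situation.

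\textbf{Angular factor.} The restriction of $\mathbf x^{\mathbf a}$ to the sphere coincides with that of a polynomial of degree $|\mathbf a|$. By the decomposition of $\mathcal P_{|\mathbf a|}^d$ into harmonics times powers of $\|\mathbf x\|^2$, it is a combination of spherical harmonics of degrees $|\mathbf a|,|\mathbf a|-2,\dots$. Hence by \eqref{eq:orthogonality-harmonics}, $I_2=0$ whenever $|\mathbf a|<M$ or $|\mathbf a|-M$ is odd.

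The finer statement, with the partial sums $a_{d-k}+\dots+a_d$ versus $m_{d-k}+\dots+m_d$, I would prove by induction on the dimension, peeling one angle at a time. The innermost factor in $\theta_1$ is $g(\theta_1)\cos^{a_{d-1}}\theta_1\sin^{a_d}\theta_1$, and Lemma~\ref{lemma:trig-integrals} kills it if $a_{d-1}+a_d<m_{d-1}+m_d$ or the difference is odd. For the next angle $\theta_k$, the integrand contains $(\sin\theta_k)^{\beta+\text{(sum of later }a_i)}\,C^{\lambda}_{m}(\cos\theta_k)\cos^{a}\theta_k$ against the Jacobian power of $\sin\theta_k$. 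Substituting $t=\cos\theta_k$ and using the Gegenbauer orthogonality with weight $(1-t^2)^{\lambda-1/2}$, this vanishes when the polynomial degree in $t$ is below $m$ or has the wrong parity. This requires the parity bookkeeping that the extra power of $\sin\theta_k$ is even, i.e.\ $(1-t^2)$ to an integer power. That in turn follows from the inductive parity condition, so I would structure the induction to carry the parity of $a_{i}+\dots+a_d-(m_i+\dots+m_d)$ along.

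\textbf{Radial factor.} In the remaining case $|\mathbf a|=M+\ell$ with $\ell\ge0$ even, substitute $t=\rho^2$:
$$
I_1=\tfrac12\int_0^1 P_{\vec j}^{(M+(d-2)/2,\vec\mu)}(t)\,t^{\ell/2}\,t^{M+(d-2)/2}(1-t)^{\mu_l}\,\dr t ,
$$
since $\rho^{2M+\ell+d-1}\,\dr\rho=\tfrac12\, t^{M+\ell/2+(d-2)/2}\,\dr t$. This matches the Jacobi--Pi\~neiro weight with $\alpha=M+(d-2)/2$, so \eqref{eq:multiple-orthogonality-JP-type-ii} gives $I_1=0$ when $\ell/2<j_l$, i.e.\ $|\mathbf a|<M+2j_l$. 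Combining with the angular cases yields \eqref{eq:multiple-orthogonality-d}.

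\textbf{Main obstacle.} The hard part is the inductive angular step: matching the powers of $\sin\theta_k$ from the Jacobian, from $\beta_k$, and from $\mathbf x^{\mathbf a}$ exactly to the Gegenbauer weight index $\lambda_k$. I would verify this first for $d=3$ using \eqref{eq:sph-harmonics} and then generalize.
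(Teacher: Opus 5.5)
Your proof is correct. The radial step is the same as the paper's: substitute $t=\rho^2$ and apply Type~II Jacobi--Pi\~neiro orthogonality with $\alpha=M+(d-2)/2$. Your treatment of the refined partial-sum conditions is also the paper's angle-by-angle induction: Lemma~\ref{lemma:trig-integrals} for $\theta_1$, then Gegenbauer orthogonality and parity for $\theta_2,\dots,\theta_{d-1}$. When you write it out, let the inductive hypothesis supply that $(a_{d-k+1}+\cdots+a_d)-(m_{d-k+1}+\cdots+m_d)$ is even \emph{and nonnegative}. You need $(1-t^2)$ raised to a nonnegative integer power, and both facts hold because otherwise $I=0$ already.

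The genuine difference is how you obtain the angular vanishing for the main claim. The paper gets ``$|\mathbf a|<M$ or $|\mathbf a|-M$ odd'' only as the final step $k=d-1$ of that induction. You get it in one line from $\mathcal P^d_{|\mathbf a|}=\bigoplus_{j}\|\mathbf x\|^{2j}\,\mathbb H^d_{|\mathbf a|-2j}$ together with the orthogonality of spherical harmonics of different degrees on $\mathbb S^{d-1}$.

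Your route is shorter and applies verbatim to any $Y\in\mathbb H^d_M$. That is what Section~\ref{sec:radial-weights} actually needs, since an arbitrary basis of harmonics is used there. The paper's computation is written for the explicit Gegenbauer-product basis \eqref{eq:sph-harmonics-d}. On the other hand, only the explicit computation produces the finer partial-sum statement, which is genuinely basis-dependent. In your write-up the decomposition step is logically subsumed by the full induction, but it separates the main orthogonality relation from the basis-specific refinement.
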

    \begin{proof}
        In this proof, we will follow an inductive procedure. First, we express the integral in \eqref{eq:multiple-orthogonality-d} in $d$-dimensional spherical polar coordinates \eqref{eq:sph-coordinates-d}. To this end,  the expression of $\mathbf{P}_{\vec j,\mathbf m}^{\vec n}(\mathbf x)$ is written  in spherical coordinates as in \eqref{eq:MOP-d-variables}, the weight $W_l$ becomes $(1-\rho^2)^{\mu_l}$, and the monomial $\mathbf x^{\mathbf a}$ becomes
        \begin{equation}
            \label{eq:monomial-d-vars}
            \mathbf x^{\mathbf a}= x_1^{a_1}\cdots x_d^{a_d} = \rho^{|\mathbf a|}\prod_{k=1}^{d-1}(\cos\theta_{d-k})^{a_{k}}(\sin\theta_{d-k})^{\alpha_k},
        \end{equation}
        where $\alpha_k=\sum_{i=k+1}^d a_i$ (observe that $\alpha_{k-1}=a_k+\alpha_k)$. Also, recall the Jacobian of the transformation to spherical polar coordinates given in \eqref{eq:jacobian-sph-pol-coordinates}.
        
        Therefore, \eqref{eq:multiple-orthogonality-d} becomes
        \begin{equation*}
            \begin{aligned}
                I=\int_{B^d} \mathbf{P}_{\vec j,\mathbf m}^{\vec n}(\mathbf x)\, \mathbf x^{\mathbf a} \,W_l(\mathbf x)\, \dr\mathbf x = & \int_0^1\int_0^{\pi}\cdots\int_0^\pi\int_{0}^{2\pi} P_{\vec j}^{(M+(d-2)/2,\vec \mu)}(\rho^2)\, \rho^{M}\\&\times g(\theta_1)\, \prod_{k=1}^{d-2}\left[ (\sin\theta_{d-k})^{\beta_k}\, C_{m_k}^{\lambda_k}(\cos \theta_{d-k}) \right]\, \rho^{|\mathbf a|}\prod_{k=1}^{d-1}(\cos\theta_{d-k})^{a_{k}}(\sin\theta_{d-k})^{\alpha_k} \\
               &  \times\rho^{d-1} \prod_{k=1}^{d-2}(\sin\theta_{d-k})^{d-k-1} \,(1-\rho^2)^{\mu_l}\,\dr\theta_{1}\dr\theta_{2}\,\cdots\dr\theta_{d-1}\dr \rho,  
            \end{aligned}
        \end{equation*}
        with $\beta_k=\sum_{i=k+1}^d m_i$ (observe that $\beta_{k-1}=m_k+\beta_k$), and $\lambda_k = \beta_k + (d-k-1)/2$. The above integral might be split as a product of $d$ univariate integrals so that $I=I_\rho\times I_{\theta_1}\times I_{\theta_2}\times\cdots\times I_{\theta_{d-1}}$ with
        \begin{equation}
            I_\rho = \int_0^1 P_{\vec j}^{(M+(d-2)/2,\vec \mu)}(\rho^2)\, \rho^{M+|\mathbf a|+d-1}(1-\rho^2)^{\mu_l}\,\dr \rho,
        \end{equation}
        \begin{equation}\label{eq:I-theta_1}
            I_{\theta_1} = \int_0^{2\pi}g(\theta_1)(\cos\theta_1)^{a_{d-1}}(\sin\theta_1)^{a_d}\,\dr\theta_1,
        \end{equation}
        \begin{equation}\label{eq:I-theta_k}
            I_{\theta_k}=\int_{0}^\pi (\sin\theta_k)^{\beta_{d-k}+\alpha_{d-k}+k-1}(\cos\theta_k)^{a_{d-k}}C_{m_{d-k}}^{\lambda_{d-k}}(\cos\theta_k)\,\dr \theta_{k}, \quad k=2,\dots,d-1.
        \end{equation}

        Recall that $g(\theta_1)=\cos((m_{d-1}+m_d)\theta_1)$ if $m_d=0$ and $g(\theta_1)=\sin((m_{d-1}+m_d)\theta_1)$ if $m_d=1$. Then, by Lemma~\ref{lemma:trig-integrals} (in Appendix~\ref{sec:appendix}), $I_{\theta_1}=0$ (and in turn $I=0$) if $\alpha_{d-2}=a_{d-1}+a_d<\beta_{d-2}=m_{d-1}+m_d$ or $\alpha_{d-2}=\beta_{d-2}+\ell_1$, with $\ell_1$ an odd integer. Now, assume $\alpha_{d-2}=\beta_{d-2}+\ell_1$ with $\ell_1=\alpha_{d-2}-\beta_{d-2}$ an even integer.

        Focus on $I_{\theta_2}$ and use the change of variables $t=\cos\theta_2$, $\dr t = -\sin(\theta_2)\,\dr\theta_2$, keeping in mind that $\sin(\theta_2)=(1-t^2)^{1/2}$:
        $$
        I_{\theta_2}=\int_{0}^\pi (\sin\theta_2)^{\beta_{d-2}+\alpha_{d-2}+1}(\cos\theta_2)^{a_{d-2}}C_{m_{d-2}}^{\lambda_{d-2}}(\cos\theta_2)\,\dr \theta_{2} = -\int_{-1}^1 C_{m_{d-2}}^{\lambda_{d-2}}(t)\,t^{a_{d-2}}\,(1-t^2)^{(\beta_{d-2}+\alpha_{d-2})/2}\,\dr t.
        $$
        
        Gegenbauer polynomials $\{C_n^\alpha(t)\}_{n\geq 0}$ are orthogonal with respect to the weight $(1-t^2)^{\alpha-1/2}$ on $[-1,1]$. Since $\lambda_{d-2}=\beta_{d-2}+1/2$, the weight associated to Gegenbauer polynomials $C_{m_{d-2}}^{\lambda_{d-2}}$ is $(1-t^2)^{\beta_{d-2}}$, so that, using the orthogonality of Gegenbauer polynomials,
        $$
        I_{\theta_2}= -\int_{-1}^1 C_{m_{d-2}}^{\beta_{d-2}+1/2}(t)\,t^{a_{d-2}}\,(1-t^2)^{(\alpha_{d-2}-\beta_{d-2})/2}(1-t^2)^{\beta_{d-2}}\,\dr t =0
        $$
        if $\deg(t^{a_{d-2}}\,(1-t^2)^{(\alpha_{d-2}-\beta_{d-2})/2})=a_{d-2}+\alpha_{d-2}-\beta_{d-2}=\alpha_{d-3}-\beta_{d-2}<m_{d-2}$, or equivalently if $\alpha_{d-3}<\beta_{d-3}$.

        Moreover, since Gegenbauer polynomials are even (resp. odd) functions if their degree is even (resp. odd) and the interval $[-1,1]$ is symmetric, they are also orthogonal to monomials $x^k$ such that $k$ and the degree of the Gegenbauer polynomial have different parity, \textit{i.e.},
        \begin{equation}
            \label{eq:orthogonality-Gegenbauer}
            \int_{-1}^{-1} C_n^{\alpha}(t) x^k (1-t^2)^{\alpha-1/2}\,\dr t = 0 \qquad\text{ if } k<n,\quad \text{ or } k\geq n\text{ and }k-n\text{ is odd}. 
        \end{equation}
        
        Hence, if $\alpha_{d-3}=\beta_{d-3}+\ell_2$ with $\ell_2$ a non-negative integer, we might write the polynomial 
        $$t^{a_{d-2}}\,(1-t^2)^{(\alpha_{d-2}-\beta_{d-2})/2}=\sum_{i=0}^{(\alpha_{d-2}-\beta_{d-2})/2}c_i t^{2i+\alpha_{d-2}},$$
        so that
        $$
        I_{\theta_2}= -\sum_{i=0}^{(\alpha_{d-2}-\beta_{d-2})/2}c_i \int_{-1}^1 C_{m_{d-2}}^{\beta_{d-2}+1/2}(t)\, t^{2i+\alpha_{d-2}}\,    (1-t^2)^{\beta_{d-2}}\,\dr t.
        $$
        Recalling \eqref{eq:orthogonality-Gegenbauer},the $i$-th integral of the previous sum vanishes if $2i+a_{d-2}<m_{d-2}$. If $2i+a_{d-2}\geq m_{d-2}$, the $i$-th integral vanishes if $2i+a_{d-2}-m_{d-2}$ is odd. Since $2i$ is always even, then the $i$-th integral is zero, and in turn $I_{\theta_2}$ and $I$, whenever $a_{d-2}-m_{d-2}$ is odd.

        Since we are assuming that $\alpha_{d-3}=\beta_{d-3}+\ell_2$, or equivalently $a_{d-2}+\alpha_{d-2}=m_{d-2}+\beta_{d-2}+\ell_2$, and also that $\alpha_{d-2}-\beta_{d-2}$ is even, so that $(a_{d-2}-m_{d-2})+(\alpha_{d-2}-\beta_{d-2})=\ell_2$. Hence, $a_{d-2}-m_{d-2}$ is odd if, and only if $\ell_2$ is odd. Summarizing, $I_{\theta_2}=0$, and in turn $I=0$, if $\alpha_{d-3}=\beta_{d-3}+\ell_2$ with $\ell_2$ being an odd integer.

        Using induction, we will assume, for $2\leq k\leq d-2$, that $I = 0$ if $\alpha_{d-k-1}<\beta_{d-k-1}$ or $\alpha_{d-k-1}=\beta_{d-k-1}+\ell_{k}$ with $\ell_k$ an odd integer. With this assumption, we will prove that $I=0$ if $\alpha_{d-k-2}<\beta_{d-k-2}$ or $\alpha_{d-k-2}<\beta_{d-k-2}+\ell_{k+1}$ with $\ell_{k+1}$ an odd integer.

        We will study what happens if $\alpha_{d-k-1}=\beta_{d-k-1}+\ell_{k}$ with $\ell_k=\alpha_{d-k-1}-\beta_{d-k-1}$ an even integer (otherwise we know $I=0$ owing to the induction hypothesis). Focus on $I_{\theta_{k+1}}$ and use the change of variables $t=\cos\theta_{k+1}$, $\dr t = -\sin\theta_{k+1}\dr \theta_{k+1}$:
        \begin{equation*}
            \begin{aligned}
                I_{\theta_{k+1}}&= \int_{0}^\pi (\sin\theta_{k+1})^{\beta_{d-k-1}+\alpha_{d-k-1}+k}(\cos\theta_{k+1})^{a_{d-k-1}}C_{m_{d-k-1}}^{\lambda_{d-k-1}}(\cos\theta_{k+1})\,\dr \theta_{k+1}\\
                &= -\int_{-1}^1 C_{m_{d-k-1}}^{\lambda_{d-k-1}}(t)\,t^{a_{d-k-1}}\,(1-t^2)^{(\beta_{d-k-1}+\alpha_{d-k-1}+k-1)/2}\,\dr t .
            \end{aligned}
        \end{equation*}
        Since $\lambda_{d-k-1}=\beta_{d-k-1}+ k/2$, these Gegenbauer polynomials are orthogonal with respect to the weight $(1-t^2)^{\beta_{d-k-1}+ (k-1)/2}$. Rewrite $I_{\theta_{k+1}}$ as
        $$
        I_{\theta_{k+1}} = -\int_{-1}^1 C_{m_{d-k-1}}^{\lambda_{d-k-1}}(t)\,t^{a_{d-k-1}}\,(1-t^2)^{(\alpha_{d-k-1}-\beta_{d-k-1})/2}\,(1-t^2)^{\beta_{d-k-1}+ (k-1)/2}\,\dr t.
        $$
        By the orthogonality of Gegenbauer polynomials, see \eqref{eq:orthogonality-Gegenbauer}, we know that this integral vanishes whenever $\deg(t^{a_{d-k-1}}\,(1-t^2)^{(\alpha_{d-k-1}-\beta_{d-k-1})/2})=\alpha_{d-k-2}-\beta_{d-k-1}<m_{d-k-1}$, this means $\alpha_{d-k-2}<\beta_{d-k-2}$. If $\alpha_{d-k-2}=\beta_{d-k-2}+\ell_{k+1}$ with $\ell_{k+1}$ a non-negative integer, recalling \eqref{eq:orthogonality-Gegenbauer} and using an argument analogous to the one in the case of $I_{\theta_2}$, we get that $I_{\theta_{k+1}}$, and in turn $I$, vanish if $\alpha_{d-k-2}=\beta_{d-k-2}+\ell_{k+1}$ with $\ell_{k+1}$ an odd positive integer, completing the induction and obtaining the second part of the theorem.
        
        As a consequence, we finally get that $I=0$ if $\alpha_0=|\mathbf a| <\beta_0=|\mathbf m|=M=|\vec n|-2|\vec j|$ or $|\mathbf a|=M+\ell_{d-1}$ with $\ell_{d-1}$ an odd positive integer. 
        
        Finally, assume $|\mathbf a|=M+\ell_{d-1}$ with $\ell_{d-1}=|\mathbf a|-M$ even and focus on $I_\rho$. By the change of variables $t=\rho^2$, $\dr t =2\rho \dr \rho$:
        $$
        I_\rho = \int_0^1 P_{\vec j}^{(M+(d-2)/2,\vec \mu)}(\rho^2)\, \rho^{M+|\mathbf a|+d-1}(1-\rho^2)^{\mu_l}\,\dr \rho \int_0^1 P_{\vec j}^{(M+(d-2)/2,\vec \mu)}(t)\, t^{(2M+\ell_{d-1}+d-2)/2}(1-\rho^2)^{\mu_l}\,\dr \rho.
        $$
        Now, Jacobi-Pi\~neiro polynomials are multiple orthogonal with respect to the weights $t^{M+(d-2)/2}(1-t^2)^{\mu_l}$, $l=1,\dots,r$. Rewriting the above integral as
        $$
        I_\rho =   \int_0^1 P_{\vec j}^{(M+(d-2)/2,\vec \mu)}(t)\, t^{\ell_{d-1}/2}\, t^{M+(d-2)/2}(1-\rho^2)^{\mu_l}\,\dr \rho,
        $$
        we obtain that $I_\rho$, and in turn $I$, are zero whenever $\ell_{d-1}/2<j_l$, this means, whenever $|\mathbf a|<M+2j_l =|\vec n|-2|\vec j|+j_l$, completing the proof.
        
    \end{proof}

    Next, we will show an example of Type~II MOP on the $3$-dimensional unit ball $B^3$.

    \begin{example} 
        Assume $d=3$, so that we will be working on the 3D ball $$B^3=\{(x,y,z)\in\RR^3:x^2+y^2+z^2\leq 1\}.$$
        Instead of \eqref{eq:MOP-d-variables}, in order to simplify the notations, we will use the $3$-variate spherical harmonics introduced in \eqref{eq:sph-harmonics-nu}, which are the most common definition for spherical harmonics on 3 variables. Then, the Type~II MOP on $B^3$ will be denoted as
        \begin{equation}
        \label{eq:mop-ball}
        \mathbf{P}_{\vec j,\nu}^{\vec n}(\rho,\theta,\phi) = P_{\vec j}^{(|\vec n|-2|\vec j|+1/2,\vec \mu)}(\rho^2)\, Y_{\nu}^{|\vec n|-2|\vec j|}(\rho,\theta,\phi),\qquad 0\leq \nu\leq 2(|\vec n|-2|\vec j|),
        \end{equation}
        where $Y_\nu^{|\vec n|-2|\vec j|}$ are defined in \eqref{eq:sph-harmonics-nu}.

        Let $\vec n = (3,4)$, $\vec j=(2,1)$, so that $M=|\vec n|-2|\vec j| = 1$. We might consider $\mathbf P^{(3,4)}_{(2,1),\nu}$, with $\nu=0, 1, 2$. These polynomials have degree $|\vec n|=7$ and arise from the multiplication of Jacobi-Pi\~neiro Type~II MOP $P^{(M+1/2,(\mu_1,\mu_2))}_{(2,1)}$ evaluated in $\rho^2$ by the spherical harmonics $Y^M_{0,1}$, $Y^M_{1,1}$ and $Y^M_{1,2}$  respectively, see \eqref{eq:sph-harmonics} and \eqref{eq:sph-harmonics-nu}. They are orthogonal to the polynomials of degree lower than $M+2j_1=5$ (resp. $M+2j_2 = 3$) with respect to the first (resp. second) measure, \emph{i.e.}:
        \begin{align*}
             \langle \mathbf P^{(3,4)}_{(2,1),\nu},q\rangle_1 &=0 \text{ if }\deg(q)<5, & 
            \langle \mathbf P^{(3,4)}_{(2,1),\nu},q\rangle_2 &=0 \text{ if }\deg(q)<3.  
        \end{align*} 
        Moreover, 
        $$
        \langle \mathbf P^{(3,4)}_{(2,1),\nu},q\rangle_l =0 \text{ if }\deg(q)=\nu'+(2k+1), \quad k\in\mathbb N_0, \quad \nu=0,1,2, \quad 1\leq l\leq r.
        $$
        
        Since
        $$
        P^{(3/2,(\mu_1,\mu_2))}_{(2,1)}(t)=\frac{1}{8} (t ((2 \mu_1+11) t (9 (2 \mu_1+4 \mu_2+31)-(2 \mu_1+13) (2 \mu_2+11) t)-63 (4 \mu_1+2 \mu_2+29))+315),
        $$
        the expressions of these polynomials in spherical coordinates are
        \begin{multline*}
            \mathbf P^{(3,4)}_{(2,1),0}(\rho,\theta,\phi)=-\frac{1}{8} \rho  \cos (\theta ) ((2 \mu_1+11) (2 \mu_1+13) (2 \mu_2+11) \rho ^6\\-9 (2 \mu_1+11) \rho ^4 (2 \mu_1+4 \mu_2+31)+63 \rho ^2 (4 \mu_1+2 \mu_2+29)-315),
        \end{multline*}
        \begin{multline*}
            \mathbf P^{(3,4)}_{(2,1),1}(\rho,\theta,\phi)=-\frac{1}{8} \rho  \sin (\theta ) \cos (\phi ) ((2 \mu_1+11) (2 \mu_1+13) (2 \mu_2+11) \rho ^6\\ -9 (2 \mu_1+11) \rho ^4 (2 \mu_1+4 \mu_2+31)+63 \rho ^2 (4 \mu_1+2 \mu_2+29)-315),
        \end{multline*}
        \begin{multline*}
            \mathbf P^{(3,4)}_{(2,1),2}(\rho,\theta,\phi)= -\frac{1}{8} \rho  \sin (\theta ) \sin (\phi ) ((2 \mu_1+11) (2 \mu_1+13) (2 \mu_2+11) \rho ^6\\-9 (2 \mu_1+11) \rho ^4 (2 \mu_1+4 \mu_2+31)+63 \rho ^2 (4 \mu_1+2 \mu_2+29)-315).
        \end{multline*}
        In Cartesian coordinates these expressions become
        \begin{equation*}
            \begin{aligned}
                \mathbf P^{(3,4)}_{(2,1),0}(x,y,z)=&-\frac{1}{8} z ((2 \mu_1+11) (2 \mu_1+13) (2 \mu_2+11) \left(x^2+y^2+z^2\right)^3-9 (2 \mu_1+11)\times \\
                &\times (2 \mu_1+4 \mu_2+31) \left(x^2+y^2+z^2\right)^2+63 (4 \mu_1+2 \mu_2+29) \left(x^2+y^2+z^2\right)-315),\\
                \mathbf P^{(3,4)}_{(2,1),1}(x,y,z)=&-\frac{1}{8} y ((2 \mu_1+11) (2 \mu_1+13) (2 \mu_2+11) \left(x^2+y^2+z^2\right)^3-9 (2 \mu_1+11) \times \\
                & \times(2 \mu_1+4 \mu_2+31) \left(x^2+y^2+z^2\right)^2+63 (4 \mu_1+2 \mu_2+29) \left(x^2+y^2+z^2\right)-315),\\
                \mathbf P^{(3,4)}_{(2,1),2}(x,y,z)=&-\frac{1}{8} x ((2 \mu_1+11) (2 \mu_1+13) (2 \mu_2+11) \left(x^2+y^2+z^2\right)^3-9 (2 \mu_1+11) \times \\
                & \times(2 \mu_1+4 \mu_2+31) \left(x^2+y^2+z^2\right)^2+63 (4 \mu_1+2 \mu_2+29) \left(x^2+y^2+z^2\right)-315).
            \end{aligned}
        \end{equation*}
                
    \end{example}

    \subsubsection{Type~I MOP on the multidimensional ball}
    
    To close Section~\ref{sec:mop-ball}, we present a construction extending Definition~\ref{def:type-i-2-vars} to the general $d$-variate setting.

    \begin{definition}
        Let $\vec n = (n_1,\dots,n_r)$ and $ \vec j = (j_1,\dots,j_r)\in\NN_0^r$ be such that $0\leq |\vec j| \leq |\vec n|/2$. Define $M=|\vec n|-2|\vec j|$ and consider the Jacobi-Pi\~neiro Type~I MOPs $\left(A_{\vec j,1}^{(M+(d-2)/2,\vec \mu)}(t),\dots,A_{\vec j,r}^{(|\vec n|-2|\vec j|+(d-2)/2,\vec \mu)}(t)\right)$. Now let $\mathbf m =(m_1,\dots,m_d)\in\NN^d_0$ be such that $|\mathbf m|=M$, $m_d\in\{0,1\}$ and consider the spherical harmonic $Y_{\mathbf m}^{M}(\mathbf x)$ introduced in \eqref{eq:sph-harmonics-d}. We define the polynomials
        \begin{equation}
            \label{eq:type-i-mop-d-vars}
            \mathbf A_{\vec j,\mathbf m,(l)}^{\vec n}(\rho,\theta_1,\dots,\theta_{d-1}) = A_{\vec j,l}^{(M+(d-2)/2,\vec \mu)}(\rho^2)\, Y_{\mathbf m}^{M}(\rho,\theta_1,\dots,\theta_{d-1}),\quad 0\leq l \leq r,
        \end{equation}
        where $\mathbf x =(x_1,\dots,x_d)\equiv(\rho,\theta_1,\dots,\theta_{d-1})\in B^d$.
    \end{definition}

    Again, this function is a polynomial of degree at most $|\vec n|-2(|\vec j|-j_l+1)$ in Cartesian coordinates $(x_1,\dots,x_d)$. The Type~I multiple orthogonality properties fulfilled by these polynomials are presented below.

    \begin{proposition}\label{prop:type-i-orthogonality-d-vars}
        Let $\vec n = (n_1,\dots,n_r)$ and $ \vec j = (j_1,\dots,j_r)\in\NN_0^r$ be such that $0\leq |\vec j| \leq |\vec n|/2$. Let $\mathbf a=(a_1,\dots,a_d), \,\mathbf m =(m_1,\dots,m_d)\in\NN^d_0$ be such that $|\mathbf m|=|\vec n|-2|\vec j|$, $m_d\in\{0,1\}$. Consider the polynomials $\mathbf A_{\vec j,\mathbf m,(1)}^{\vec n},\dots, \mathbf A_{\vec j,\mathbf m,(r)}^{\vec n}$ in \eqref{eq:type-i-mop-d-vars} and denote an arbitrary $d$-variate monomial as $\mathbf x^{\mathbf a} = x_1^{a_1}\cdots x_d^{a_d}$. Then,
        \begin{equation}
            \label{eq:type-i-orthogonality-d-vars}
            \sum_{l=1}^r\langle\mathbf A_{\vec j,\mathbf m,(l)}^{\vec n}(\mathbf x), \mathbf x^{\mathbf a}\rangle_l=\int_{B^d}\sum_{l=1}^r \mathbf A_{\vec j,\mathbf m,(l)}^{\vec n}(\mathbf x)\, \mathbf x^{\mathbf a} \, W_l(\mathbf x)\, \dr \mathbf x =0\qquad\text{ if } 0\leq |\mathbf a|<|\vec n|-2.
        \end{equation}
        Moreover, \eqref{eq:type-i-orthogonality-d-vars} also holds if $a_{d-k}+\cdots+a_{d} < m_{d-k}+\cdots + m_{d}$, or $a_{d-k}+\cdots+a_{d} -( m_{d-k}+\cdots + m_{d})$ is an odd positive integer, $k=1,\cdots,d$.
    \end{proposition}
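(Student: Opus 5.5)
The plan is to mirror the proof of Proposition~\ref{prop:type-ii-orthogonality-d-vars}, replacing the single Type~II radial factor by the sum over $l$ of Type~I radial factors. We write each inner product in spherical polar coordinates \eqref{eq:sph-coordinates-d}, with the monomial given by \eqref{eq:monomial-d-vars} and the Jacobian \eqref{eq:jacobian-sph-pol-coordinates}. The key structural observation is that every $\mathbf A_{\vec j,\mathbf m,(l)}^{\vec n}$ carries the \emph{same} spherical harmonic $Y_{\mathbf m}^{M}$, independent of $l$. Hence, by linearity, the left-hand side of \eqref{eq:type-i-orthogonality-d-vars} factors as
$$
\Big(\prod_{k=1}^{d-1} I_{\theta_k}\Big)\times \sum_{l=1}^r w_{\mu_l}\int_0^1 A_{\vec j,l}^{(M+(d-2)/2,\vec\mu)}(\rho^2)\,\rho^{M+|\mathbf a|+d-1}(1-\rho^2)^{\mu_l}\,\dr\rho,
$$
where the angular integrals $I_{\theta_1},\dots,I_{\theta_{d-1}}$ are exactly those in \eqref{eq:I-theta_1}--\eqref{eq:I-theta_k}, exactly as in \eqref{eq:integral-type-I-split} for $d=2$.

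The angular step can be taken verbatim from the proof of Proposition~\ref{prop:type-ii-orthogonality-d-vars}. Lemma~\ref{lemma:trig-integrals} handles $I_{\theta_1}$. The inductive Gegenbauer argument based on \eqref{eq:orthogonality-Gegenbauer} handles $I_{\theta_2},\dots,I_{\theta_{d-1}}$. Together they show the angular product vanishes whenever some tail sum $a_{d-k}+\cdots+a_d$ is smaller than $m_{d-k}+\cdots+m_d$, or exceeds it by an odd amount. This is precisely the ``moreover'' part of the statement. In particular, the product vanishes if $|\mathbf a|<M$, or if $|\mathbf a|-M$ is odd.

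It remains to treat $|\mathbf a|=M+\ell$ with $\ell\ge0$ even. We substitute $t=\rho^2$, so each radial integral becomes
$$
\tfrac12\int_0^1 A_{\vec j,l}^{(M+(d-2)/2,\vec\mu)}(t)\, t^{\ell/2}\, t^{M+(d-2)/2}(1-t)^{\mu_l}\,\dr t .
$$
The radial sum is therefore $\tfrac12\sum_l\langle A_{\vec j,l}, t^{\ell/2}\rangle_l$, taken with respect to the Jacobi–Piñeiro weights with parameter $\alpha=M+(d-2)/2$. One caveat: $W_l$ includes the normalising constant $w_{\mu_l}$, which depends on $l$. As in the $d=2$ proof, we absorb these constants into the weights (equivalently, use the Type~I polynomials built for $w_{\mu_l}t^\alpha(1-t)^{\mu_l}$), which does not affect the vanishing pattern. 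By the Type~I relations \eqref{eq:multiple-orthogonality-JP-type-i} for the multi-index $\vec j$, this sum vanishes when $\ell/2\le|\vec j|-2$, i.e.\ when $|\mathbf a|\le M+2|\vec j|-4=|\vec n|-4$. Since the only even $\ell$ left between that bound and $|\mathbf a|<|\vec n|-2$ would force $|\mathbf a|-M$ odd or $\ell/2\le|\vec j|-2$, every $|\mathbf a|<|\vec n|-2$ is covered by either the angular or the radial vanishing.

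The main obstacle is bookkeeping rather than new ideas. First, the factorisation must be justified, which relies on the harmonic being independent of $l$. Second, the threshold arithmetic must be checked carefully: the parity from the angular part must be combined correctly with the Type~I index range $0\le k\le|\vec j|-2$. Third, the $w_{\mu_l}$ constants must be handled consistently with the normalisation used for the Type~I Jacobi–Piñeiro polynomials.
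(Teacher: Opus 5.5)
Your proposal is correct and follows the paper's proof essentially step for step. Because every $\mathbf A_{\vec j,\mathbf m,(l)}^{\vec n}$ carries the same harmonic $Y_{\mathbf m}^M$, the angular integrals factor out of the sum over $l$ and are handled exactly as in the Type~II case. For $|\mathbf a|=M+\ell$ with $\ell$ even, the substitution $t=\rho^2$ and the Type~I Jacobi--Pi\~neiro relations give vanishing for $\ell/2\le|\vec j|-2$, and the leftover value $|\mathbf a|=|\vec n|-3$ is killed by parity.

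Your caveat about the constants $w_{\mu_l}$ is well taken, and the paper resolves it the same way implicitly: its $d=2$ proof takes the Type~I polynomials with respect to $w_{\mu_l}t^{\alpha}(1-t)^{\mu_l}$, while its $d$-variate proof simply drops $w_{\mu_l}$. What actually preserves the vanishing is the componentwise rescaling $A_{\vec j,l}\mapsto A_{\vec j,l}/w_{\mu_l}$, not the constants being harmless. With the unscaled $A_{\vec j,l}$ and distinct $w_{\mu_l}$, the weighted sum generally does not vanish.
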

    \begin{proof}
    Using spherical polar coordinates \eqref{eq:sph-coordinates-d} and combining the definition of the polynomials $\mathbf A_{\vec j,\mathbf m,(l)}^{\vec n}$ in \eqref{eq:type-i-mop-d-vars}, the expression of a monomial $\mathbf x^{\mathbf a}$ in \eqref{eq:monomial-d-vars} and the Jacobian of the transformation \eqref{eq:jacobian-sph-pol-coordinates}, the integral becomes
    \begin{equation*}
        \begin{aligned}
            I=\int_{B^d}\sum_{l=1}^r \mathbf A_{\vec j,\mathbf m,(l)}^{\vec n}(\mathbf x)\, \mathbf x^{\mathbf a} \, W_l(\mathbf x)\, \dr \mathbf x = & \int_0^1\int_0^{\pi}\cdots\int_0^\pi\int_{0}^{2\pi} \sum_{l=1}^r  A_{\vec j,l}^{(|\vec n|-2|\vec j|+(d-2)/2,\vec \mu)}(\rho^2)\, \rho^{|\vec n|-2|\vec j|}\\&\times Y_{\mathbf m}^{|\vec n|-2|\vec j|}(1,\theta_1,\dots,\theta_{d-1})\, \rho^{|\mathbf a|}\prod_{k=1}^{d-1}(\cos\theta_{d-k})^{a_{k}}(\sin\theta_{d-k})^{\alpha_k} \\
           &  \times\rho^{d-1} \prod_{k=1}^{d-2}[(\sin\theta_{d-k})^{d-k-1}] \,(1-\rho^2)^{\mu_l}\,\dr\theta_{1}\dr\theta_{2}\,\cdots\dr\theta_{d-1}\dr \rho.  
        \end{aligned}
    \end{equation*}
    This might be split in $d$ integrals $I=I_{\rho}\times I_{\theta_1}\times\cdots\times I_{\theta_{d-1}}$ such that the expressions of $I_{\theta_1},I_{\theta_2},\dots, I_{\theta_{d-1}}$ are the same as in \eqref{eq:I-theta_1} and \eqref{eq:I-theta_k}, while
    $$
    I_\rho = \int_0^1\sum_{l=1}^r A_{\vec j,l}^{(|\vec n|-2|\vec j|+(d-2)/2,\vec \mu)}(\rho^2)\, \rho^{|\mathbf{a}|+|\vec n|-2|\vec j|+d-1}(1-\rho^2)^{\mu_l}\,\dr \rho.
    $$
    Following the arguments in the proof of Proposition~\ref{prop:type-i-orthogonality-d-vars}, we obtain that $I=0$ if $a_{d-k}+\cdots+a_{d} < m_{d-k}+\cdots + m_{d}$ or $a_{d-k}+\cdots+a_{d} -( m_{d-k}+\cdots + m_{d})$ is odd, $k=1,\cdots,d$. In particular, if $k=0$, we deduce that $I=0$ if $|\mathbf a|<|\mathbf m|=|\vec n|-2|\vec j|$, and also if $|\mathbf a|=|\vec n|-2|\vec j|+\ell$ with $\ell$ being an odd natural number.

    Suppose that $|\mathbf a|=|\vec n|-2|\vec j|+\ell$ with $\ell$ even and focus on $I_\rho$. Then, with this assumption and using the change of variable $t=\rho^2$, $\dr t =2\rho\, \dr \rho$, we write $I_\rho$ as
    $$
    \dfrac 1 2  \int_0^1\sum_{l=1}^r A_{\vec j,l}^{(|\vec n|-2|\vec j|+(d-2)/2,\vec \mu)}(t)\, t^{|\vec n|-2|\vec j|+(d-2)/2+\ell/2}(1-t^2)^{\mu_l}\,\dr t.
    $$
    Using the Type~I multiple orthogonality of the Jacobi-Pi\~neiro polynomials $A_{\vec j,l}^{(|\vec n|-2|\vec j|+(d-2)/2,\vec \mu)}(t)$ with respect to the weights $t^{|\vec n|-2|\vec j|+(d-2)/2}(1-t^2)^{\mu_l}$, $I_\rho=0$ if $\ell/2<|\vec j|-1$, which leads to $|\mathbf a|<|\vec n|-2$, completing the proof. 
    \end{proof}

    So far, we have provided analogous definitions for both Type~I and Type~II MOPs on multivariate domains, following a clear methodology that extends the ideas in \cite{DX14} by substituting the Jacobi polynomials in \eqref{eq:polynomials-in-the-ball-d} by one of their multiple orthogonal extensions, namely the Jacobi-Pi\~neiro polynomials. Once these definitions and their properties have been established, one of the main results in multiple orthogonality, the Nearest Neighbour Recurrence Relation \eqref{eq:NNRR-1-var}, will be extended to the multivariate setting.
    
    \section{Nearest Neighbour Relation in two variables}\label{sec:NNR}

    As mentioned in the preliminaries on multiple orthogonal polynomials in Section~\ref{sec:pre}, univariate families of MOPs satisfy Nearest Neighbour Recurrence Relations \eqref{eq:NNRR-1-var}, \eqref{eq:NNRR-1-var-2}, which extend the well-known three-term recurrence relation of standard orthogonal polynomials. Once definitions and multiple orthogonal properties of multiple orthogonal polynomials in the unit ball have been established throughout  Section~\ref{sec:mop-ball}, the way these relations extend to this multivariate setting is studied in this section.

    Although the results presented here can be extended to an arbitrary number of variables, we restrict our analysis to the bivariate case, described in Section~\ref{subsec:mop-2-vars}.

    Before extending the NNRR, we need an essential preliminary result, which is presented in the next section.

    \subsection{Biorthogonality}

    Let us define the Type~I function as
    \begin{equation}
        \label{eq:type-I-function}
        \mathbf Q_{\vec j,\nu}^{\vec n}(\mathbf x)=\sum_{l=1}^r \mathbf A_{\vec j,\nu,(l)}^{\vec n}(\mathbf x)\, W_l(\mathbf x),
    \end{equation}
    which allows to write the Type~I multiple orthogonality from Proposition~\ref{prop:Type-I-MOP-orthogonality} as
    $$
    \langle\mathbf Q_{\vec j,\nu}^{\vec n}(x,y),x^a y^b\rangle = \int_D \mathbf Q_{\vec j,\nu}^{\vec n}(x,y) \, x^a y^b \dxdy =0 \quad \text{ if }  0\leq a+b< |\vec n|-2.
    $$
    Where the inner product $\langle\cdot,\cdot\rangle$ (without sub-index) is the integral inner product with respect to the Lebesgue measure on $D$, \textit{i.e.}, with the constant weight function $1$. Indeed, in this equation, the weights $W_l$ have been omitted as they are explicitly included in the Type~I function. 
    
    By simultaneously leveraging  the properties of univariate Type~I and Type~II multiple orthogonality, we establish a biorthogonality relation between Type~I and Type~II MOPs on the disk.

    \begin{proposition} \label{prop:biorthogonality}
    
    Let $\vec n = (n_1,\dots,n_r),\,\vec m=(m_1,\dots,m_r), \, \vec j = (j_1,\dots,j_r),\,\vec k =(k_1,\dots,k_r)\in\NN_0^r$ be such that $0\leq |\vec j| \leq |\vec n|/2$ and $0\leq|\vec k|\leq|\vec m|/2$. Consider the Type~II polynomial $\mathbf{P}_{\vec j,\nu}^{\vec n}$ defined in \eqref{eq:mop-disk} and the Type~I function $\mathbf Q_{\vec k,\eta}^{\vec m}$ introduced in \eqref{eq:type-I-function}. Then 
    \begin{equation}
        I:=\int_D \mathbf{P}_{\vec j,\nu}^{\vec n}(\mathbf x)\, \mathbf Q_{\vec k,\eta}^{\vec m}(\mathbf x)\, \dr \mathbf x = 0\qquad \text{ if } \nu\neq \eta \text{ or } |\vec n|-2|\vec j|\neq |\vec m|-2|\vec k|.
    \end{equation}
    Moreover, if $\nu=\eta$ and $|\vec n|-2|\vec j| = |\vec m|-2|\vec k|$, then
    \begin{equation}
        I= \begin{cases}
            0 & \text{ if }  \vec k\leq \vec j \quad \text{(componentwise)},\\
            0 & \text{ if }  |\vec j|\leq |\vec k|-2, \\
            \|Y_{\nu}^{|\vec n|-2|\vec j|}\|^2/2 & \text{ if }  |\vec j| = |\vec k| -1,
        \end{cases}
    \end{equation}
    where the squared norm of the spherical harmonic is given in \eqref{eq:norm-harmonics-2-var}.
    \end{proposition}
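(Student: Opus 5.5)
Write the integral in polar coordinates and split it into an angular factor and a radial factor, exactly as in the proofs of Propositions~\ref{prop:orthogonality-2-vars} and~\ref{prop:Type-I-MOP-orthogonality}. Set $M=|\vec n|-2|\vec j|$ and $M'=|\vec m|-2|\vec k|$. Then
\[
I=\left(\int_0^{2\pi} Y_\nu^{M}(1,\theta)\,Y_\eta^{M'}(1,\theta)\,\dr\theta\right)\sum_{l=1}^r w_{\mu_l}\int_0^1 P^{(M,\vec\mu)}_{\vec j}(\rho^2)\,A^{(M',\vec\mu)}_{\vec k,l}(\rho^2)\,\rho^{M+M'+1}(1-\rho^2)^{\mu_l}\,\dr\rho .
\]
The angular factor is the $L^2(\mathbb S^1)$ inner product of $\cos/\sin(M\theta)$ with $\cos/\sin(M'\theta)$. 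By \eqref{eq:orthogonality-harmonics} it vanishes when $\nu\neq\eta$ or $M\neq M'$, which gives the first claim. When $\nu=\eta$ and $M=M'$, it equals $\|Y_\nu^{M}\|^2$ from \eqref{eq:norm-harmonics-2-var}.

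For the radial factor with $M=M'$, substitute $t=\rho^2$. It becomes
\[
\tfrac12\sum_{l} w_{\mu_l}\int_0^1 P_{\vec j}(t)\,A_{\vec k,l}(t)\,t^{M}(1-t)^{\mu_l}\,\dr t ,
\]
which is the univariate biorthogonality pairing between the Jacobi--Pi\~neiro Type~II polynomial $P_{\vec j}$ and the Type~I polynomials $A_{\vec k,l}$, all with the same parameter $\alpha=M$. The constants $w_{\mu_l}$ should be absorbed into the Type~I normalization, as the paper implicitly does in the proof of Proposition~\ref{prop:Type-I-MOP-orthogonality}. I then treat the three cases.

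\emph{Case $\vec k\le\vec j$.} Write $\sum_l\int P_{\vec j}A_{\vec k,l}\,\omega_l$. Each $A_{\vec k,l}$ has degree at most $k_l-1\le j_l-1$. Type~II orthogonality \eqref{eq:multiple-orthogonality-JP-type-ii} of $P_{\vec j}$ against $\omega_l$ kills every term separately.

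\emph{Case $|\vec j|\le|\vec k|-1$.} Expand $P_{\vec j}$ in monomials $t^s$ with $s\le|\vec j|$. Type~I orthogonality \eqref{eq:multiple-orthogonality-JP-type-i} says $\sum_l\int A_{\vec k,l}\,t^s\omega_l=0$ for $s\le|\vec k|-2$, and equals $1$ for $s=|\vec k|-1$. If $|\vec j|\le|\vec k|-2$, every monomial is killed. If $|\vec j|=|\vec k|-1$, only the leading monomial survives, contributing its coefficient. Since $P_{\vec j}$ is monic, the radial factor is $\tfrac12$, and $I=\|Y_\nu^M\|^2/2$.

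The main obstacle is the normalization bookkeeping in the last case. First, the Rodrigues formula \eqref{eq:JP-type-ii} need not produce a monic polynomial, so either monicity is assumed or a leading-coefficient factor appears. Second, the weights $w_{\mu_l}$ sit inside $W_l$, so the Type~I normalization must be taken with respect to $w_{\mu_l}t^M(1-t)^{\mu_l}$. I would state both conventions explicitly: $P_{\vec j}$ monic, and the $A$'s normalized against the weights including $w_{\mu_l}$. Under these conventions the value $\|Y_\nu^{M}\|^2/2$ follows as above.
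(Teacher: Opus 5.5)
Your argument is correct and is essentially the paper's proof. Both split the integral in polar coordinates into an angular factor, killed by the orthogonality of $\cos/\sin$ on the circle, and a radial factor which, after $t=\rho^2$, is the univariate Jacobi--Pi\~neiro Type~I/Type~II biorthogonality pairing; the only difference is that you verify the three cases directly from the Type~I and Type~II relations, whereas the paper cites Ismail's biorthogonality theorem. Your normalization caveats are justified: the paper's own Rodrigues example $5-(\mu_2+6)t$ is not monic, and the $A_{\vec k,l}$ must be Type~I with respect to $w_{\mu_l}t^{M}(1-t)^{\mu_l}$ for both the case $|\vec j|\le|\vec k|-2$ and the value $\|Y_\nu^{M}\|^2/2$ to hold as stated.
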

    \begin{proof}
    From \eqref{eq:type-I-function} and \eqref{eq:Type-I-MOP-2-vars}, it is possible to express the Type~I function as
    $$
    \mathbf Q_{\vec k,\eta}^{\vec m}(\rho,\theta) = \sum_{l=1}^r  A^{(|\vec m|-2|\vec k|, \vec \eta)}_{\vec k,l}(\rho^2)\, \rho^{|\vec m|-2|\vec k|}\, Y_\eta^{(|\vec m|-2|\vec k|)}(1,\theta)\,(1-\rho^2)^{\mu_l},
    $$
    where $Y_\eta^{(|\vec m|-2|\vec k|)}(1,\theta)=\cos((|\vec m|-2|\vec k|)\theta)$ if $\eta=1$, and $Y_\eta^{(|\vec m|-2|\vec k|)}(1,\theta)=\sin((|\vec m|-2|\vec k|)\theta)$ if $\eta=2$. Using \eqref{eq:mop-disk} and splitting the radial and the angular part, $I$ can be expressed as $I=I_1 \times I_2$ where
    \begin{equation}
        \begin{aligned}
            I_1 &= \int_0^1 P^{(|\vec n|-2|\vec j|, \vec \mu)}_{\vec j}(\rho^2)\, \rho^{|\vec n|-2|\vec j|+|\vec m|-2|\vec k|+1} \sum_{l=1}^r  A^{(|\vec m|-2|\vec k|, \vec \eta)}_{\vec k,l}(\rho^2)\,(1-\rho^2)^{\mu_l}\,\dr\rho,\\
            I_2 &= \int_0^{2\pi}  Y_\nu^{(|\vec n|-2|\vec j|)}(1,\theta)\, Y_\eta^{(|\vec m|-2|\vec k|)}(1,\theta)\, \dr\theta.
        \end{aligned}
    \end{equation}
    We know that $I_2=\|Y_{\nu}^{|\vec n|-2|\vec j|}\|^2\, \delta_{\nu,\eta}\delta_{|\vec n|-2|\vec j|,|\vec m|-2|\vec k|}$, which gives us the first part of the result. 

    For the second one, assume $\nu=\eta$ and $|\vec n|-2|\vec j| = |\vec m|-2|\vec k|$ and apply the change of variables $t=\rho^2$, $\dr t = 2\rho \,\dr \rho$ to $I_1$, obtaining
    $$
    I_1 = \frac 1 2\int_0^1 P^{(|\vec n|-2|\vec j|, \vec \mu)}_{\vec j}(t)\,  \sum_{l=1}^r  A^{(|\vec n|-2|\vec j|, \vec \mu)}_{\vec k,l}(t)\, \,t^{|\vec n|-2|\vec j|}(1-t)^{\mu_l}\,\dr t.
    $$
    Observe that the univariate Type~I function for Jacobi-Pi\~neiro multiple orthogonal polynomials stands in $I_1$, so that
    $$
    I_1 = \frac 1 2\int_0^1 P^{(|\vec n|-2|\vec j|, \vec \mu)}_{\vec j}(t)\,  Q_{\vec k}^{|\vec n|-2|\vec j|,\vec \mu}(t)\,\dr t,
    $$
    which, using the biorthogonality relation between Type~I and Type~II MOP \cite[Theorem 23.1.6]{Ism05}, leads to the result.
    
    \end{proof}

    This biorthogonality relation between Type~I and Type~II MOP on the disk is a fundamental tool in order to prove what comes next: the extension of NNRR.

    \subsection{Nearest Neighbour Relation}
    
    Let $\vec n\in\NN^r$ and consider a path of neighbour multi-indices $\{\vec m_k \,:\, k=0,\dots,|\vec n| \}$ such that $ |\vec m_k|=k$, $\vec m_0 = \vec 0$, $\vec m_{|\vec n|}=\vec n$, $k=0,\dots,|\vec n|$.  
    Then
    \begin{equation}
        \label{eq:basis-MOP-n}
        \left\{\mathbf P_{\vec m_i,1}^{\vec n}: 0\leq i\leq |\vec n|/2\right\}\cup\left\{\mathbf P_{\vec m_i,2}^{\vec n}: 0\leq i< |\vec n|/2\right\}
    \end{equation}
    is a set of $|\vec n|+1$ lineally independent polynomials of degree $|\vec n|$. Repeating this consideration for every multi-index $\vec m_0,\dots,\vec m_{|\vec n|-1}$, we get
    \begin{equation}
        \label{eq:basis-MOP}
        \bigcup_{k=0}^{|\vec n|}\left\{\mathbf P_{\vec m_i,1}^{\vec m_k}: 0\leq i\leq k/2\right\}\cup\left\{\mathbf P_{\vec m_i,2}^{\vec m_k}: 0\leq i< k/2\right\}
    \end{equation}
    is a basis of $\Pi_{|\vec n|}^2$, the space of bivariate polynomials of degree at most $|\vec n|$. In this way, it is possible to express any bivariate polynomial as a linear combination of bivariate multiple orthogonal polynomials on the disk after choosing an appropriate path.
    
    In order to simplify the notation, we collect these polynomials into vectors and denote 
    \begin{equation}
    \label{eq:polynomial-vector-MOP}
    \begin{aligned}
        \mathbb P_{k} &:= \left(\mathbf P_{\vec m_0,1}^{\vec m_k},\dots,\mathbf P_{\vec m_{\lfloor k/2\rfloor },1}^{\vec m_k},\mathbf P_{\vec m_0,2}^{\vec m_k},\dots,\mathbf P_{\vec m_{\lfloor k/2\rfloor},2}^{\vec m_k}\right)& \text{if }k\text{ is odd,}\\
        \mathbb P_{k} &:= \left(\mathbf P_{\vec m_0,1}^{\vec m_k},\dots,\mathbf P_{\vec m_{k/2},1}^{\vec m_k},\mathbf P_{\vec m_0,2}^{\vec m_k},\dots,\mathbf P_{\vec m_{k/2-1},2}^{\vec m_k}\right)& \text{if }k\text{ is even.}
    \end{aligned}
    \end{equation}
    
    Observe that whether $k$ is odd or even, $\mathbb P_k$ is a vector of $k+1$ linearly independent polynomials of degree $k$.\\

    Combining the basis \eqref{eq:basis-MOP} with the multiple orthogonal properties of these polynomials, we present the following result, which extends the Nearest Neighbour Relations for Type~II MOPs \cite[Theorem 23.1.7]{Ism05}.\\

    \begin{theorem}\label{th:NNRR-2-vars}
        Let $\vec n$ and $\vec j$ be two multi-indices such that $0\leq |\vec j|\leq|\vec n|/2$ and let $\mathbf P_{\vec j,\nu}^{\vec n}$, with $\nu\in\{1,2\}$, be associated Type~II MOPs on the disk \eqref{eq:mop-disk}. Consider $\{\vec m_k \,:\, k=0,\dots,|\vec n| \}$ a path of neighbour multi-indices such that $ |\vec m_k|=k$, $\vec m_0 = \vec 0$, $\vec m_{|\vec n|}=\vec n$, $k=0,\dots,|\vec n|$. Choose $\vec w := \vec m_{|\vec n|+1}=\vec n+\vec e_l$ for some $l\in\{1,\dots,r\}$. For each multi-index $\vec m_k$, consider the polynomial vectors $\mathbb P_k$ given in \eqref{eq:polynomial-vector-MOP}.
        
        Then, the following relations hold
        \begin{equation}
        \label{eq:NNR}
            \begin{aligned}
                x\, \mathbf P_{\vec j,\nu}^{\vec n}(\mathbf x) = \mathbf c^{(|\vec n|+1)}\mathbb P_{|\vec n|+1} + \mathbf c^{(|\vec n|)}\mathbb P_{|\vec n|} +  \sum_{k=|\vec n|-2|\vec j|-1}^{|\vec n|-1}\mathbf c^{(k)}\mathbb P_{k}, \\
                y\, \mathbf P_{\vec j,\nu}^{\vec n}(\mathbf x) = \tilde{\mathbf c}^{(|\vec n|+1)}\mathbb P_{|\vec n|+1} + \tilde{\mathbf c}^{(|\vec n|)}\mathbb P_{|\vec n|} +  \sum_{k=|\vec n|-2|\vec j|-1}^{|\vec n|-1}\tilde{\mathbf c}^{(k)}\mathbb P_{k}, 
            \end{aligned}
        \end{equation}
        assuming that the sum starts with $k=0$ if $|\vec n|-2|\vec j|=0$, and where $\mathbf c^{(k)},\,\tilde{\mathbf c}^{(k)}\in\mathbb R^{k+1}$ are given by:
        \begin{equation}
        \label{eq:NNRR-coefficients-x}
        \begin{aligned}
            \mathbf c^{(k)} &=\begin{cases}
                (c^{(k)}_{0,1},\dots,c^{(k)}_{\lfloor k/2 \rfloor ,1}, c^{(k)}_{0,1}, \dots, c^{(k)}_{\lfloor k/2 \rfloor,1}) & \text{ if }k \text{ is odd},\\
                (c^{(k)}_{0,1},\dots,c^{(k)}_{k/2,1}, c^{(k)}_{0,1}, \dots, c^{(k)}_{k/2-1,1}) & \text{ if }k \text{ is even}, 
            \end{cases} &
            c^{(k)}_{i,\nu} = \dfrac{2}{\|Y_{\nu}^{k-2i}\|^2} \langle x\, \mathbf P_{\vec j,\nu}^{\vec n}, \mathbf Q^{\vec m_{k+2}}_{\vec m_{i+1},\nu}\rangle
        \end{aligned}
        \end{equation}
        \begin{equation}
        \label{eq:NNRR-coefficients-y}
        \begin{aligned}
            \tilde{\mathbf c}^{(k)} &=\begin{cases}
                (\tilde c^{(k)}_{0,1},\dots,\tilde c^{(k)}_{\lfloor k/2 \rfloor ,1}, \tilde c^{(k)}_{0,1}, \dots, \tilde c^{(k)}_{\lfloor k/2 \rfloor,1}) & \text{ if }k \text{ is odd},\\
                (\tilde c^{(k)}_{0,1},\dots,\tilde c^{(k)}_{k/2,1}, \tilde c^{(k)}_{0,1}, \dots, \tilde c^{(k)}_{k/2-1,1}) & \text{ if }k \text{ is even}, 
            \end{cases} &
            \tilde c^{(k)}_{i,\nu} = \dfrac{2}{\|Y_{\nu}^{k-2i}\|^2} \langle y\, \mathbf P_{\vec j,\nu}^{\vec n}, \mathbf Q^{\vec m_{k+2}}_{\vec m_{i+1},\nu}\rangle.
        \end{aligned}
        \end{equation}
        
    \end{theorem}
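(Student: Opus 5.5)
The plan is the standard expansion-in-a-basis argument. Since $x\,\mathbf P_{\vec j,\nu}^{\vec n}$ and $y\,\mathbf P_{\vec j,\nu}^{\vec n}$ are bivariate polynomials of degree $|\vec n|+1$, we expand them in the basis \eqref{eq:basis-MOP} of $\Pi^2_{|\vec n|+1}$ built along the path extended by $\vec w=\vec m_{|\vec n|+1}$. This gives a relation of the form
\begin{equation*}
x\,\mathbf P_{\vec j,\nu}^{\vec n}=\sum_{k=0}^{|\vec n|+1}\mathbf c^{(k)}\mathbb P_k,
\end{equation*}
and similarly for $y$. The work then splits into two tasks: identify each coefficient through a dual (biorthogonal) family, and show that the coefficients of $\mathbb P_k$ vanish for $k<|\vec n|-2|\vec j|-1$.

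\textbf{Coefficient extraction.} We pair both sides with the Type~I functions $\mathbf Q^{\vec m_{k+2}}_{\vec m_{i+1},\eta}$ from \eqref{eq:type-I-function}. Proposition~\ref{prop:biorthogonality} is the tool here. Pairing kills every basis element whose angular index or harmonic degree differs, since it vanishes unless $\nu=\eta$ and the degrees $|\vec n|-2|\vec j|$ agree. Within a fixed harmonic degree $k-2i$, the radial indices run along the path $\vec m_0,\vec m_1,\dots$, which is totally ordered componentwise. Pairing with the index $\vec m_{i+1}$ then gives zero on the entries $\vec m_{i'}$ with $i'\ge i+1$ (since $\vec k\le\vec j$) and on those with $i'\le i-1$ (since $|\vec j|\le|\vec k|-2$). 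The only survivor is $i'=i$, with value $\|Y^{k-2i}_\eta\|^2/2$.

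The bookkeeping must be checked with care. One has to verify that the Type~I function with indices $(\vec m_{k+2},\vec m_{i+1})$ carries the same harmonic degree $k-2i$ as $\mathbf P^{\vec m_k}_{\vec m_i,\eta}$; it does, since $(k+2)-2(i+1)=k-2i$. Each basis element is thus isolated, which yields the formula $c^{(k)}_{i,\nu}=\frac{2}{\|Y^{k-2i}_\nu\|^2}\langle x\mathbf P,\mathbf Q\rangle$. The pairing is the radial–angular factorisation from the proof of Proposition~\ref{prop:biorthogonality}, which reduces to univariate Jacobi–Piñeiro biorthogonality. Basis elements of the same harmonic degree but different total degree $k$ also have to be separated; the componentwise ordering of the path handles this.

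\textbf{Vanishing of low-order coefficients.} We must show $\langle x\,\mathbf P_{\vec j,\nu}^{\vec n},\mathbf Q^{\vec m_{k+2}}_{\vec m_{i+1},\eta}\rangle=0$ for $k<|\vec n|-2|\vec j|-1$. We move $x$ onto the other factor: $x\,\mathbf Q^{\vec m_{k+2}}_{\vec m_{i+1},\eta}$ equals (polynomial of degree at most $k+1$) times weights. Writing $x=\rho\cos\theta$ and using $\cos\theta\cos m\theta=\tfrac12[\cos(m+1)\theta+\cos(m-1)\theta]$, the angular part of this product contains only harmonics of degrees $k-2i\pm1$. By the angular orthogonality Lemma~\ref{lemma:trig-integrals}, the integral vanishes unless $k-2i\pm1=|\vec n|-2|\vec j|$. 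When $k<|\vec n|-2|\vec j|-1$, both $k-2i+1$ and $k-2i-1$ are strictly less than $|\vec n|-2|\vec j|$, so the coefficient is zero. This gives the lower limit of the sum. The case $|\vec n|-2|\vec j|=0$ is degenerate, and the sum then starts at $k=0$ as stated. The $y$ relation is identical, using $\sin\theta$ products.

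\textbf{Main obstacle.} The delicate step is justifying the dual-basis claim fully. Biorthogonality as stated only covers the two index regimes $\vec k\le\vec j$ and $|\vec j|\le|\vec k|-2$, and we must check that every pair along the chosen path falls into one of them. I would first prove that the path ordering forces this, then verify that the resulting triangular pairing matrix is diagonal. If some pairs escape both regimes, the fallback is to show that the pairing matrix is triangular and invert it, at the cost of less explicit coefficients.
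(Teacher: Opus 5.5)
Your proof is correct. The expansion and the coefficient extraction are the same as in the paper. The paper pairs with $\mathbf Q^{\vec m_t}_{\vec m_s,\eta}$ and uses Proposition~\ref{prop:biorthogonality} with the componentwise monotonicity of the path to force $\eta=\nu$, $i=s-1$, $k=t-2$. So the ``main obstacle'' you flag is already settled by that monotonicity, and no triangular fallback is needed.

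Where you differ is the vanishing step, and there each route buys something different.
\begin{itemize}
\item \textbf{The paper's route.} It writes $\langle x\,\mathbf P^{\vec n}_{\vec j,\nu},\mathbf Q^{\vec m_t}_{\vec m_s,\eta}\rangle=\sum_{l}\langle \mathbf P^{\vec n}_{\vec j,\nu},x\,\mathbf A^{\vec m_t}_{\vec m_s,\eta,(l)}\rangle_l$ and bounds $\deg(x\,\mathbf A^{\vec m_t}_{\vec m_s,\eta,(l)})\le t-1$ by Remark~\ref{remark:degree-type-I}. It then applies the Type~II relations of Proposition~\ref{prop:orthogonality-2-vars} with the crude estimate $M+2j_l\ge M$, where $M=|\vec n|-2|\vec j|$.
\item \textbf{Your route.} You use the angular selection rule: multiplying by $\rho\cos\theta$ or $\rho\sin\theta$ shifts the harmonic degree by $\pm1$. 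Strictly, this is the orthogonality \eqref{eq:orthogonality-harmonics} of the trigonometric system, though Lemma~\ref{lemma:trig-integrals} also applies.
\end{itemize}

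The case $a+b<M$ of Proposition~\ref{prop:orthogonality-2-vars} is itself proved through the angular integral. So for the theorem as stated, the paper is in effect using your mechanism, but routed through a degree count that forgets the $\pm1$ rule. Your version is more elementary and proves more. At each level $k$, only entries with $k-2i=M\pm1$ can be nonzero, with $\eta=\nu$ for $x$ and $\eta\neq\nu$ for $y$.

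Conversely, the paper's route keeps the radial threshold $M+2j_l$, which is what drives the step-line corollary. Take the step-line with $\vec n=(4,4)$ and $\vec j=(2,1)$. The coefficient $c^{(1)}_{0,1}$ has $k-2i=1=M-1$, so your rule does not kill it. It nevertheless vanishes: it is a multiple of $\langle\mathbf P^{\vec n}_{\vec j,1},x^2\rangle_1$, and $2<M+2j_1=6$. Combining the two arguments gives the sharpest form of the relation.
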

    
    \begin{proof}
        In this proof, we will show the first relation, the one multiplying by $x$, as the second one is analogous.
    
        As mentioned above, each multi-index $\vec m_k$ in the path from $\vec m_0=\vec 0$ to $\vec m_{|\vec n|}$ provides us $k+1$ lineally independent polynomials of degree $k$ \eqref{eq:basis-MOP-n}, so that \eqref{eq:basis-MOP} forms a basis for the polynomials of degree $|\vec n|$.
        
        Using this argument with $\vec w=\vec n + \vec e_l$, since $|\vec w|=|\vec n|+1$ and $x \, \mathbf P_{\vec j,\nu}^{\vec n}(\mathbf x)$ is a polynomial of degree $|\vec n|+1$, we might express it as a linear combination of Type~II MOP on the disk associated to the path $\vec m_0=\vec 0,\dots,\vec m_{|\vec n|}=\vec n, \vec m_{|\vec n|+1}=\vec w$, this is
        $$
        x\, \mathbf P_{\vec j,\nu}^{\vec n}(\mathbf x) =  \sum_{k=0}^{|\vec n|+1}\mathbf c^{(k)} \mathbb P_k(\mathbf x) =  \sum_{k=0}^{|\vec n|+1}\sum_{i=0}^{k/2}\sum_{\nu=1}^2 c_{i,\nu}^{(k)} \mathbf P_{\vec m_i,\nu}^{\vec m_k}(\mathbf x),
        $$
        where we are denoting as $\mathbf c^{(k)}$ the concatenation of the vectors $(c_{i,1}^{(k)}:0\leq i\leq k/2 )$ and $(c_{i,2}^{(k)}:0\leq i< k/2 )$.

        We will proof that $c^{(k)}_{i,\nu}=0$ whenever $k<|\vec n|-2|\vec j|-1$. Using the integral inner product with respect to the Lebesgue measure and multiplying the last expression by $\mathbf Q^{\vec m_t}_{\vec m_s,\eta}$, we have
        $$
        \langle x\, \mathbf P_{\vec j,\nu}^{\vec n}, \mathbf Q^{\vec m_t}_{\vec m_s,\eta}\rangle =  \sum_{k=0}^{|\vec n|+1}\sum_{i=0}^{k/2}\sum_{\nu=1}^2 c_{i,\nu}^{(k)} \langle  \mathbf P_{\vec m_i,\nu}^{\vec m_k},\mathbf Q^{\vec m_t}_{\vec m_s,\eta} \rangle.
        $$
        Using the biorthogonality relation from Proposition~\ref{prop:biorthogonality}, we have that $\langle  \mathbf P_{\vec m_i,\nu}^{\vec m_k},\mathbf Q^{\vec m_t}_{\vec m_s,\eta} \rangle=0$ if either $\nu\neq \eta$ or $k-2i \neq t-2 s$. Assume $\nu = \eta$ and $k-2i = t-2 s$, then $\langle \mathbf P_{\vec m_{i},\nu}^{\vec m_k}, \mathbf Q^{\vec m_t}_{\vec m_s,\nu} \rangle=0$ if $s\leq i$ (as $\vec m_s\leq\vec m_i$ iff $s\leq i$) or $|\vec m_i|=i\leq|\vec m_s|-2=s-2$. Consequently, the product vanishes unless $|\vec m_i|=i=|\vec m_s|-1=s-1$. In that case, substituting in $k-2i = t-2 s$, we get $k=t-2$. As a consequence, 
        $$
       \langle x\, \mathbf P_{\vec j,\nu}^{\vec n}, \mathbf Q^{\vec m_t}_{\vec m_s,\eta}\rangle= c^{(t-2)}_{s-1,\eta}\|Y_{\eta}^{t-2s}\|^2/2.
        $$
        From this equality, denoting $t=k+2, s=i+1$ we obtain the expressions of the coefficients in \eqref{eq:NNRR-coefficients-x}.
        
        The coefficient $c^{(t-2)}_{s-1,\eta}$ is zero whenever the product $\langle x\, \mathbf P_{\vec j,\nu}^{\vec n}, \mathbf Q^{\vec m_t}_{\vec m_s,\eta}\rangle$ is zero. Now observe
        $$
        \langle x\, \mathbf P_{\vec j,\nu}^{\vec n}, \mathbf Q^{\vec m_t}_{\vec m_s,\eta}\rangle = \langle \mathbf P_{\vec j,\nu}^{\vec n}, x\, \mathbf Q^{\vec m_{t}}_{\vec m_{s},\eta}\rangle = \sum_{l=1}^r \langle \mathbf P_{\vec j,\nu}^{\vec n}, x\, \mathbf A^{\vec m_{t}}_{\vec m_{s},\eta,(l)}\rangle_l. 
        $$
        By Type~II multiple orthogonality, see Proposition~\ref{prop:orthogonality-2-vars}, each product $\langle \mathbf P_{\vec j,\nu}^{\vec n}, x\, \mathbf A^{\vec m_{t}}_{\vec m_{s},\eta,(l)}\rangle_l$ vanishes if 
        \begin{equation}
            \label{eq:NNRR-1}
        \deg( x\, \mathbf A^{\vec m_{t}}_{\vec m_{s},\eta,(l)})<|\vec n|-2(|\vec j|-j_l).
        \end{equation}
        Recalling Remark~\ref{remark:degree-type-I},  
        \begin{equation*}
            \begin{aligned}
                \deg(x\, \mathbf A^{\vec m_{t}}_{\vec m_{s},\eta,(l)})&\leq  1 + |\vec m_t|-2(|\vec m_s|-(\vec m_s)_l+1) \\ 
                &=1+t-2(s-(\vec m_{s})_l+1)\\&=t-1-2(s-(\vec m_{s})_l),
            \end{aligned}
        \end{equation*}
        where we used $|\vec m_k|=k$.
          
        Since $(s-(\vec m_{s})_l)=|\vec m_{s}|-(\vec m_{s})_l\geq 0$, the minimum reachable value for $(s-(\vec m_{s})_l)$ is $0$, arising whenever $s=0$, in whose case $\vec m_{s}=\vec 0$. With this information, the inequality 
        \begin{equation}
            \label{eq:NNRR-12}
            \deg( x\, \mathbf A^{\vec m_{t}}_{\vec m_{s},\eta,(l)})\leq t-1-2(s-(\vec m_{s})_l) \leq t-1
        \end{equation}
        holds for every $l$.
        
        On the other hand, we have the right hand side of \eqref{eq:NNRR-1}. Since $\vec j$ might be any multi-index satisfying $0\leq |\vec j|\leq|\vec n|/2$, all we can assert is that $0\leq j_l\leq|\vec n|/2$. In this way, 
        \begin{equation}
            \label{eq:NNRR-3}
            |\vec n|-2(|\vec j|-j_l) \geq |\vec n|-2|\vec j|.
        \end{equation}
        Gathering \eqref{eq:NNRR-12} and \eqref{eq:NNRR-3}, we get that if $t-1<|\vec n|-2|\vec j|$, then \eqref{eq:NNRR-1} holds for every $l$, and consequently $c^{(t-2)}_{s-1,\eta}=0$. Equivalently, $c^{(k)}_{i,\eta}=0$ if $k<|\vec n|-2|\vec j|-1$, completing the proof.

    \end{proof}

    Observe that Equations~\eqref{eq:NNR} become more useful as $|\vec n|-2|\vec j|$ increases, since more coefficients vanish. However, these relations depend strongly on the multi-index $\vec j$ and on its modulus. This is due to the fact that $\vec j$ is not fixed, as it may be any  multi-index satisfying $0\leq|\vec j|\leq|\vec n|/2$. 

    We now study a particular case of these relations by imposing a specific structure on the path. This fact allows us to control the components of the multi-indices, at the cost of reducing the  freedom in their choice. Then, consider a path $\{\vec m_k:k\geq 0\}$ of neighbour multi-indices such that $|\vec m_k|=k$, $\vec m_0 = \vec 0$, $\vec m_1 = (1,0,\dots,0)$, $\vec m_2=(1,1,\dots,0)$, \dots , $\vec m_r=(1,1,\dots,1)$, $\vec m_{r+1}=(2,1,\dots,1)$, \dots ,\linebreak $\vec m_{ra+i}=(a+1,\overset{(i)}{\dots},a+1,a,\dots,a)$, and so on; see Figure~\ref{fig:step-line} for a graphic representation. This path is known as the \emph{step-line} and is  widely used to standardize the indexing of multiple orthogonal polynomials, see \cite{BDFMAF23, MRW25}. 
    \begin{figure}
        \centering
        \includegraphics[width=0.4\linewidth]{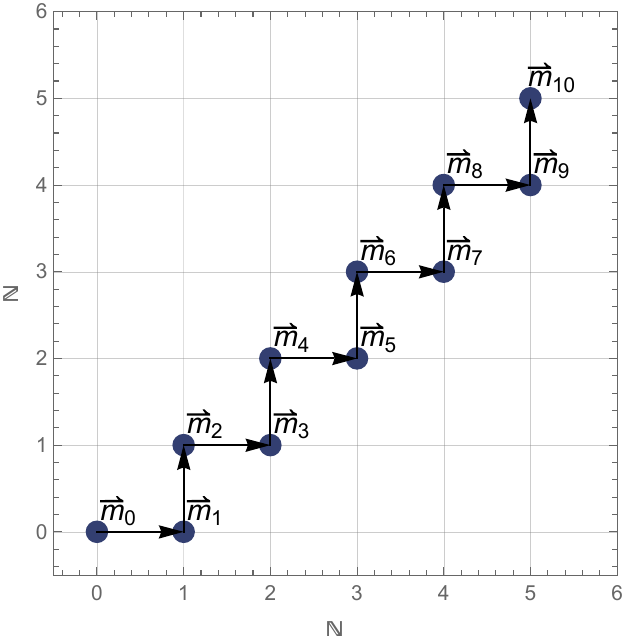}
        \caption{Step-line muti-indices for $r=2$}
        \label{fig:step-line}
    \end{figure}

    Let $\vec n=(n_1,\dots,n_r)\in\NN^r$ be a multi-index on the step-line, and assume that $|\vec n| = ar+i$, where $a = \lfloor|\vec n|/r\rfloor$ and $i = |\vec n|\,\mathrm{mod} \, r$. Then $n_j = a+1$ if $j\leq i$, and $n_j = a$ if $j> i$. Consequently, each component of $\vec n$ satisfies $\lfloor|\vec n|/r\rfloor\leq n_j \leq \lfloor|\vec n|/r\rfloor+1$. Using this characterization of the components of a multi-index on the step-line, we now present the following result.

    \begin{corollary}
        In the same setting as Theorem~\ref{th:NNRR-2-vars}, assume $\vec n$ is in the step-line, as well as the path $\vec m_0=\vec 0, \vec m_1,\dots,\vec m_{|\vec n|}=\vec n$ goes through the step-line. Then,
        \begin{equation}
        \label{eq:NNR-stepline}
            \begin{aligned}
                x\, \mathbf P_{\vec j,\nu}^{\vec n}(\mathbf x) = \mathbf c^{(|\vec n|+1)}\mathbb P_{|\vec n|+1} + \mathbf c^{(|\vec n|)}\mathbb P_{|\vec n|} +  \sum_{k=2\lfloor |\vec n|/(2r)\rfloor-1}^{|\vec n|-1}\mathbf c^{(k)}\mathbb P_{k}, \\
                y\, \mathbf P_{\vec j,\nu}^{\vec n}(\mathbf x) = \tilde{\mathbf c}^{(|\vec n|+1)}\mathbb P_{|\vec n|+1} + \tilde{\mathbf c}^{(|\vec n|)}\mathbb P_{|\vec n|} +  \sum_{k=2\lfloor |\vec n|/(2r)\rfloor-1}^{|\vec n|-1}\tilde{\mathbf c}^{(k)}\mathbb P_{k}, 
            \end{aligned}
        \end{equation}
        where the expressions of $\mathbf c^{(k)},\,\tilde{\mathbf c}^{(k)}$ are given in \eqref{eq:NNRR-coefficients-x}, \eqref{eq:NNRR-coefficients-y}.

    \end{corollary}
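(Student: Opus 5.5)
The corollary is Theorem~\ref{th:NNRR-2-vars} with the lower summation index sharpened. So I will rerun the vanishing argument in its proof, now using the step-line structure to bound the components of $\vec m_s$ and of $\vec j$ more carefully. The coefficient formula \eqref{eq:NNRR-coefficients-x} and the biorthogonality reduction from Proposition~\ref{prop:biorthogonality} stay the same. They still give $c^{(t-2)}_{s-1,\eta}\,\|Y_\eta^{t-2s}\|^2/2=\langle x\,\mathbf P_{\vec j,\nu}^{\vec n},\mathbf Q^{\vec m_t}_{\vec m_s,\eta}\rangle$, and this equals $\sum_{l}\langle \mathbf P_{\vec j,\nu}^{\vec n}, x\,\mathbf A^{\vec m_t}_{\vec m_s,\eta,(l)}\rangle_l$. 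By Proposition~\ref{prop:orthogonality-2-vars}, each term vanishes when $\deg(x\,\mathbf A^{\vec m_t}_{\vec m_s,\eta,(l)})<|\vec n|-2|\vec j|+2j_l$. Only the lower bound for the right-hand side needs to be strengthened.

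In the theorem the worst case $j_l=0$ was allowed, which gave the threshold $|\vec n|-2|\vec j|$. Here I would use the characterization of step-line indices, namely $\lfloor |\vec m|/r\rfloor\le (\vec m)_l\le \lfloor |\vec m|/r\rfloor+1$. The index $\vec j$ appearing in $\mathbf P^{\vec n}_{\vec j,\nu}$ is one of the path indices $\vec m_i$, as in the basis \eqref{eq:basis-MOP-n}. Hence $\vec j$ lies on the step-line, and $j_l\ge\lfloor|\vec j|/r\rfloor$ for every $l$.

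Plugging this in gives $|\vec n|-2|\vec j|+2j_l\ge |\vec n|-2|\vec j|+2\lfloor |\vec j|/r\rfloor$. Combining with the degree bound \eqref{eq:NNRR-12}, $\deg\le t-1$, the coefficient vanishes as soon as $t-1<|\vec n|-2|\vec j|+2\lfloor|\vec j|/r\rfloor$. The remaining step is to minimize the right-hand side over the admissible range $0\le|\vec j|\le|\vec n|/2$, so that the statement is uniform in $\vec j$. The function $J\mapsto |\vec n|-2J+2\lfloor J/r\rfloor$ is nonincreasing in $J$ for $r\ge1$. Its minimum is therefore attained at $J=\lfloor|\vec n|/2\rfloor$, which gives roughly $|\vec n|-2\lfloor|\vec n|/2\rfloor+2\lfloor \lfloor|\vec n|/2\rfloor/r\rfloor\ge 2\lfloor|\vec n|/(2r)\rfloor$. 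Here I use $\lfloor\lfloor m/2\rfloor/r\rfloor=\lfloor m/(2r)\rfloor$. Consequently $c^{(k)}_{i,\eta}=0$ whenever $k=t-2<2\lfloor|\vec n|/(2r)\rfloor-1$, which is exactly the lower limit in \eqref{eq:NNR-stepline}. The $y$-relation follows identically.

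The main obstacle is justifying that $\vec j$ itself lies on the step-line, so that the lower bound on $j_l$ is available. If $\vec j$ is genuinely arbitrary, no improvement over the theorem is possible. I would read the hypothesis "the path goes through the step-line" as forcing $\vec j=\vec m_{|\vec j|}$, as in \eqref{eq:basis-MOP-n}, and state this explicitly. A second, smaller point is the floor bookkeeping in the minimization. I would check the parity cases of $|\vec n|$ separately to confirm that the bound $2\lfloor|\vec n|/(2r)\rfloor$ is not off by one.
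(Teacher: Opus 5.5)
Your proposal is correct and follows the paper's proof essentially step for step. Both arguments take $j_l\ge\lfloor|\vec j|/r\rfloor$ from the step-line membership of $\vec j$, maximize $J-\lfloor J/r\rfloor$ over $0\le J\le|\vec n|/2$ to get $|\vec n|-2(|\vec j|-j_l)\ge 2\lfloor|\vec n|/(2r)\rfloor$, and combine this with the degree bound $t-1$ for $x\,\mathbf A^{\vec m_t}_{\vec m_s,\eta,(l)}$ from the theorem. You are slightly more careful than the paper on two points: it simply asserts that $\vec j$ lies on the step-line where you state the assumption explicitly, and it plugs in $|\vec n|/2$ where you use $\lfloor|\vec n|/2\rfloor$ and check parity.
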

    \begin{proof}
        This proof is analogous to that of Theorem~\ref{th:NNRR-2-vars}, but makes use of the structure of the multi-indices to get another lower bound for $|\vec n|-2(|\vec j|-j_l)$.

        Since $\vec j$ is an element from the step-line, then $j_l\geq \lfloor|\vec j|/r\rfloor$, so that
        \begin{equation}
            \label{eq:NNRR-2}
            |\vec n|-2(|\vec j|-j_l)\geq |\vec n| -2(|\vec j|-\lfloor|\vec j|/r\rfloor),
        \end{equation}

        Now, we will seek an upper bound for $|\vec j|-\lfloor|\vec j|/r\rfloor$. For this purpose, the  function $f(t)=t-\lfloor t/r\rfloor$ is defined on $\mathbb N_0$. This function is non-decreasing, as
        $$
        f(0)=0, f(1)=1,\dots, f(r-1)=r-1,f(r)=r-1, f(r+1)=r,\dots
        $$
        and, in general if $t=ar+i$ with $a=\lfloor t/r\rfloor$ and $0\leq i\leq r-1$, then $f(t)=a(r-1)+i$.

        Then $f(|\vec j|)=|\vec j|-\lfloor|\vec j|/r\rfloor$ will reach its maximum at the maximum value of $|\vec j|$, which is $|\vec n|/2$. Then $|\vec j|-\lfloor |\vec j|/r\rfloor \leq |\vec n|/2- \lfloor |\vec n|/(2r)\rfloor$. Applying this inequality in \eqref{eq:NNRR-2}:
        $$
        |\vec n|-2(|\vec j|-j_l)\geq |\vec n| -2(|\vec j|-\lfloor|\vec j|/r\rfloor)\geq |\vec n| -2( |\vec n|/2- \lfloor |\vec n|/(2r)\rfloor)=2\lfloor |\vec n|/(2r)\rfloor.
        $$
        Combining these inequalities with \eqref{eq:NNRR-12}, we get that \eqref{eq:NNRR-1} holds for every $l$ whenever $t< 2\lfloor |\vec n|/(2r)\rfloor+1$. In turn, $c^{(t-2)}_{s-1,\eta}=0$ whenever that condition holds, and equivalently $c^{(k)}_{i,\nu} =0$ for $0\leq k < 2\lfloor |\vec n|/(2r)\rfloor-1$, completing the proof.
    \end{proof}    

    Observe that we have removed the dependence on $|\vec j|$ using the knowledge about its components. Let us show an example to illustrate the possible advantages of this case.

    \begin{example}
        Using $r=2$, let $\vec n = (4,4)$, $\vec j =(2,1)$ and consider the path $\vec m_0,\dots,\vec m_{|\vec n|}=\vec n$ in the step-line (see Figure~\ref{fig:step-line}), and choose either $\vec w=(5,4)$ or $\vec w=(4,5)$. Let $\mathbb P_0,\dots,\mathbb P_{|\vec n|+1}$ be the polynomial vectors defined in \eqref{eq:polynomial-vector-MOP}. Then, \eqref{eq:NNR} tells us that
        \begin{equation*}
            x\, \mathbf P_{(2,1),\nu}^{(4,4)}(\mathbf x) = \mathbf c^{(|\vec n|+1)}\mathbb P_{|\vec n|+1} + \mathbf c^{(|\vec n|)}\mathbb P_{|\vec n|} +  \sum_{k=1}^{|\vec n|-1}\mathbf c^{(k)}\mathbb P_{k},
        \end{equation*}
        since $|\vec n|-2|\vec j|-1 = 1$. However, according to \eqref{eq:NNR-stepline},
        \begin{equation*}
            x\, \mathbf P_{(2,1),\nu}^{(4,4)}(\mathbf x) = \mathbf c^{(|\vec n|+1)}\mathbb P_{|\vec n|+1} + \mathbf c^{(|\vec n|)}\mathbb P_{|\vec n|} +  \sum_{k=3}^{|\vec n|-1}\mathbf c^{(k)}\mathbb P_{k},
        \end{equation*}
        as $2\lfloor|\vec n|/(2r)\rfloor-1 =3$, giving us a `shorter tail'.
    \end{example}

    To close this paper, in the next section we briefly introduce an extension of this framework: multiple orthogonal polynomials for radial weights supported on $\mathbb R^2$.
    
    \section{Multiple Orthogonal Polynomials for radial weights}\label{sec:radial-weights}

    In this section, the notion of MOP on the disk, studied in Section~\ref{subsec:mop-2-vars}, is extended to general radial weights. A weight function $W(\mathbf x)$ is said to be \emph{radial} if it is of the form $W(\mathbf x)=\omega(\rho)=\omega(\sqrt{x_1^2+\cdots + x_d^2})$, where we use spherical polar coordinates $\mathbf x =(x_1,\dots,x_d)\equiv(\rho,\theta_1,\dots,\theta_{d-1})$, introduced in \eqref{eq:sph-coordinates-d} \cite[Sections 2.6.2 and 5.1.2]{DX14}. 
       
    Now consider $r$ radial weights $W_1(\mathbf x)=\omega_1(\rho),\dots,W_r(\mathbf x)=\omega_r(\rho)$. For $k\geq 0$, define the system of weights $\rho^{k+1}\omega_1(\rho), \dots, \rho^{k+1}\omega_r(\rho)$ supported on $[0,+\infty)$. Let $\vec n$ be a normal multi-index for this system, we denote by $p_{\vec n}^{(k)}$ the associated Type~II MOP and by $q_{\vec n,l}^{(k)}$, $1\leq l\leq r$ the associated Type~I MOPs. This means that
    \begin{equation}
        \label{eq:type-ii-orthogonality-radial-aux}
        \int_0^{\infty} p_{\vec n}^{(k)}(\rho)\, \rho^t\, \rho^{k+1}\omega_l(\rho)\,\dr \rho = 0\qquad\text{ if }0\leq t<n_l, \quad 1\leq l\leq r,
    \end{equation}
    and
    \begin{equation}
        \label{eq:type-i-orthogonality-radial-aux}
        \sum_{l=1}^r \int_0^{\infty} q_{\vec n,l}^{(k)}(\rho)\,\rho^t\,\rho^{k+1}\,\omega_l(\rho)\,\dr \rho = \delta_{t,|\vec n|-1}, \qquad 0\leq t \leq |\vec n|-1.
    \end{equation}
    Using these auxiliary polynomials, we now present the Type~II and Type~I MOPs for the radial weights $W_1,\dots, W_r$.

    \subsection{Type~II MOP for radial weights}

    Following the approach in Section~\ref{subsec:type-ii-disk}, we define polynomials associated with radial weights, analogous to the disk case.

    \begin{definition}  Let $\vec n = (n_1,\dots,n_r)$ and $\vec j = (j_1,\dots,j_r)\in\NN_0^r$ be such that $0\leq |\vec j| \leq |\vec n|/2$ and consider the polynomial $p_{2\vec j}^{(2|\vec n|-4|\vec j|)}$ satisfying \eqref{eq:type-ii-orthogonality-radial-aux}, and  $Y_\nu^{|\vec n|-2|\vec j|}$, $\nu=1,\dots,\dim(\mathbb H_{|\vec n|-2|\vec j|}^d)$ a basis of $\mathbb H_{|\vec n|-2|\vec j|}^d$. Then, we define
    \begin{equation}
        \label{eq:type-ii-radial}
        \mathbf P^{\vec n}_{\vec j,\nu}(\mathbf x) = p_{2\vec j}^{(2|\vec n|-4|\vec j|)}(\rho)\, Y_\nu^{|\vec n|-2|\vec j|}(\mathbf x).
    \end{equation}
    \end{definition}
    
    This function is a polynomial of degree $|\vec n|$, and satisfies the following multiple orthogonality properties
    \begin{proposition}
        Let $\vec n = (n_1,\dots,n_r)$ and $\vec j = (j_1,\dots,j_r)\in\NN_0^r$ be such that  $0\leq |\vec j| \leq |\vec n|/2$ and consider the polynomials $\mathbf P_{\vec j,\nu}^{\vec n}$, $\nu=1,\dots,\dim(\mathbb H_{|\vec n|-2|\vec j|}^d)$, defined in \eqref{eq:type-ii-radial}. Denote a $d$-variate monomial $\mathbf x^{\mathbf a}=x_1^{a_1}\cdots x_d^{a_d}$. Then
        \begin{equation}
        \label{eq:type-ii-orthogonality-radial}
            \int_{\mathbb R^2}\mathbf P_{\vec j,\nu}^{\vec n}(\mathbf x)\,\mathbf x^{\mathbf a} \,  W_l(\mathbf x)\, \dr \mathbf x = 0\qquad\text{ if }0\leq |\mathbf a|< |\vec n|-2|\vec j|+2j_l,
        \end{equation}
        for $1\leq l\leq r$ and $\nu=1,\dots,\dim(\mathbb H_{|\vec n|-2|\vec j|}^d)$.
    \end{proposition}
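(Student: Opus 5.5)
The plan is to mirror the proofs of Proposition~\ref{prop:orthogonality-2-vars} and Proposition~\ref{prop:type-ii-orthogonality-d-vars}. Write $M:=|\vec n|-2|\vec j|$, pass to spherical polar coordinates \eqref{eq:sph-coordinates-d}, and factor the integral in \eqref{eq:type-ii-orthogonality-radial} into a radial and an angular part. Since $W_l(\mathbf x)=\omega_l(\rho)$ is radial, the only change from the ball case is in the radial factor. There the Jacobi--Pi\~neiro polynomial in $\rho^2$ is replaced by the auxiliary polynomial $p_{2\vec j}^{(2M)}(\rho)$, and $(1-\rho^2)^{\mu_l}$ is replaced by $\omega_l(\rho)$. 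Because the integral in the statement is over $\mathbb R^2$ and the weights in \eqref{eq:type-ii-orthogonality-radial-aux} carry the factor $\rho^{k+1}$ (the planar Jacobian), I would carry out the argument for $d=2$, where $\mathbf x=(\rho,\theta)$. At the end I would remark on how the exponent shifts for general $d$.

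First step: write $Y_\nu^{M}(\mathbf x)=\rho^{M}Y_\nu^M(1,\theta)$ and $\mathbf x^{\mathbf a}=\rho^{|\mathbf a|}\cos^{a_1}\theta\,\sin^{a_2}\theta$. With the Jacobian $\rho$, the integral becomes $I=I_\rho\, I_\theta$, where
\begin{equation*}
I_\rho=\int_0^\infty p_{2\vec j}^{(2M)}(\rho)\,\rho^{M+|\mathbf a|+1}\,\omega_l(\rho)\,\dr\rho,\qquad I_\theta=\int_0^{2\pi}Y_\nu^M(1,\theta)\cos^{a_1}\theta\,\sin^{a_2}\theta\,\dr\theta .
\end{equation*}
By Lemma~\ref{lemma:trig-integrals} (as in the proof of Proposition~\ref{prop:orthogonality-2-vars}), $I_\theta=0$ whenever $|\mathbf a|<M$, or whenever $|\mathbf a|-M$ is odd. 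So it only remains to treat $|\mathbf a|=M+\ell$ with $\ell\ge 0$ (even or not, the radial argument below does not need the parity).

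Second step: in that case rewrite the radial exponent as $M+|\mathbf a|+1=\ell+(2M+1)$. This gives
\begin{equation*}
I_\rho=\int_0^\infty p_{2\vec j}^{(2M)}(\rho)\,\rho^{\ell}\,\rho^{2M+1}\omega_l(\rho)\,\dr\rho .
\end{equation*}
This is exactly the left-hand side of \eqref{eq:type-ii-orthogonality-radial-aux} with $k=2M$, multi-index $2\vec j$ and $t=\ell$. Hence $I_\rho=0$ whenever $\ell<2j_l$, that is, whenever $|\mathbf a|<M+2j_l=|\vec n|-2|\vec j|+2j_l$. Together with the vanishing of $I_\theta$ for $|\mathbf a|<M$, this covers the whole range in \eqref{eq:type-ii-orthogonality-radial}.

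The main point to check is the matching of exponents in the second step. The index $2M$ and the doubled multi-index $2\vec j$ in the definition \eqref{eq:type-ii-radial} are precisely what make $\rho^{M+|\mathbf a|+1}$ split as $\rho^{\ell}\rho^{2M+1}$, with $\ell$ ranging over $0,\dots,2j_l-1$. No change of variables $t=\rho^2$ is needed here, unlike the Jacobi case. I would also verify that the normality of $2\vec j$ for the system $\rho^{2M+1}\omega_l$ is assumed, so that $p_{2\vec j}^{(2M)}$ exists, and that the integrals converge; this follows from finiteness of the moments of the $\omega_l$. For general $d$ the Jacobian contributes $\rho^{d-1}$, so the same argument works once the auxiliary system is taken as $\rho^{2M+d-1}\omega_l$. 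I would mention this only as a remark, since the statement is posed on $\mathbb R^2$.
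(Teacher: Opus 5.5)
Your proposal is correct and is essentially the paper's proof. You separate the angular factor, which vanishes by Lemma~\ref{lemma:trig-integrals} when $|\mathbf a|<M$ or $|\mathbf a|-M$ is odd, from the radial factor $\int_0^\infty p_{2\vec j}^{(2M)}(\rho)\,\rho^{\ell}\rho^{2M+1}\omega_l(\rho)\,\dr\rho$, which vanishes for $\ell<2j_l$ by \eqref{eq:type-ii-orthogonality-radial-aux} with $k=2M$. Your side remarks are also accurate: the radial step needs no parity restriction on $\ell$, and the exponent $2M+1$ (which the paper also uses) comes from the $d=2$ Jacobian, so for general $d$ the auxiliary system would have to be $\rho^{2M+d-1}\omega_l$.
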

    \begin{proof}
        This proof is analogous to that of Proposition~\ref{prop:type-ii-orthogonality-d-vars}, with a modification in the integral concerning the radial part in the case $|\mathbf a|=|\vec n|-2|\vec j|+\ell$ with $\ell$ even, which reads
        $$
        I_1=\int_0^\infty p_{2\vec j}^{(2|\vec n|-4|\vec j|)}(\rho)\,\rho^\ell\,\rho^{2|\vec n|-4|\vec j|+1}\, \omega_l(\rho)\,\dr\rho.
        $$
        and by \eqref{eq:type-ii-orthogonality-radial-aux}, the result follows.
    \end{proof}
    
    Indeed, these properties keep an analogy with the ones stablished in Proposition~\ref{prop:type-ii-orthogonality-d-vars}.

    \subsection{Type~I MOP for radial weights}
    
    We now define the Type~I polynomials for radial weights

    \begin{definition} Let $\vec n = (n_1,\dots,n_r)$ and $\vec j = (j_1,\dots,j_r)\in\NN_0^r$ be such that  $0\leq |\vec j| \leq |\vec n|/2$ and consider the polynomials $q_{2\vec j,l}^{(2|\vec n|-4|\vec j|)}$, $1\leq l\leq r$ satisfying \eqref{eq:type-i-orthogonality-radial-aux}, and  $Y_\nu^{|\vec n|-2|\vec j|}$, $\nu=1,\dots,\dim(\mathbb H_{|\vec n|-2|\vec j|}^d)$ a basis of $\mathbb H_{|\vec n|-2|\vec j|}^d$. Then, we define
    \begin{equation}
        \label{eq:type-i-radial}
        \mathbf A^{\vec n}_{\vec j,\nu,(l)}(\mathbf x) = q^{(2|\vec n|-4|\vec j|)}_{2\vec j, l}(\rho)\, Y_\nu^{|\vec n|-2|\vec j|}(\mathbf x).
    \end{equation}
        
    \end{definition}

    Now, we study the Type~I multiple orthogonal properties satisfied by this functions.

    \begin{proposition} Let $\vec n = (n_1,\dots,n_r)$ and $\vec j = (j_1,\dots,j_r)\in\NN_0^r$ be such that  $0\leq |\vec j| \leq |\vec n|/2$ and consider the polynomials $\mathbf A_{\vec j,\nu,(l)}^{\vec n}$, $1\leq l\leq r$, $\nu=1,\dots,\dim(\mathbb H_{|\vec n|-2|\vec j|}^d)$, defined in \eqref{eq:type-i-radial}. Denote a $d$-variate monomial $\mathbf x^{\mathbf a}=x_1^{a_1}\cdots x_d^{a_d}$. Then,
    \begin{equation}
        \label{eq:type-ii-orthogonality-radial}
        \int_{\mathbb R^2}\sum_{l=1}^r\mathbf A_{\vec j,\nu,(l)}^{\vec n}(\mathbf x) \,\mathbf x^{\mathbf a}\, W_l(\mathbf x)\, \dr \mathbf x = 0\qquad\text{ if }0\leq|\mathbf a|<|\vec n|-1.
    \end{equation}
    \end{proposition}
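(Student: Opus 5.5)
The plan is to follow the proofs of Proposition~\ref{prop:Type-I-MOP-orthogonality} and Proposition~\ref{prop:type-i-orthogonality-d-vars}. The one change is that the radial factor is now handled with the auxiliary Type~I relations \eqref{eq:type-i-orthogonality-radial-aux} instead of the Jacobi--Pi\~neiro ones. Since the statement integrates over $\mathbb R^2$, I work with $d=2$ and polar coordinates \eqref{eq:polar-coordinates}, whose Jacobian is $\rho$. Set $M:=|\vec n|-2|\vec j|$ and write $\mathbf x^{\mathbf a}=\rho^{|\mathbf a|}\cos^{a_1}\theta\,\sin^{a_2}\theta$. Each weight $W_l(\mathbf x)=\omega_l(\rho)$ depends only on $\rho$, and $Y_\nu^{M}(\mathbf x)=\rho^M Y_\nu^M(1,\theta)$. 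Using linearity of the integral, the left-hand side therefore factors as
$$
\left(\int_0^{2\pi} Y_\nu^{M}(1,\theta)\cos^{a_1}\theta\sin^{a_2}\theta\,\dr\theta\right)\sum_{l=1}^r\int_0^\infty q^{(2M)}_{2\vec j,l}(\rho)\,\rho^{M+|\mathbf a|+1}\,\omega_l(\rho)\,\dr\rho ,
$$
exactly as in \eqref{eq:integral-type-I-split}.

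First I dispose of the angular factor. By Lemma~\ref{lemma:trig-integrals}, as already used in the proof of Proposition~\ref{prop:orthogonality-2-vars}, this factor vanishes whenever $|\mathbf a|<M$. It also vanishes whenever $|\mathbf a|=M+\ell$ with $\ell$ odd. So only the case $|\mathbf a|=M+\ell$ with $\ell\geq 0$ even remains.

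In that case the radial exponent is $M+|\mathbf a|+1=\ell+2M+1=\ell+(k+1)$ with $k=2M$. This is precisely the weight system $\rho^{2M+1}\omega_l(\rho)$ defining $q^{(2M)}_{2\vec j,l}$, and it is why the superscript $2|\vec n|-4|\vec j|$ was chosen in \eqref{eq:type-i-radial}. The radial sum thus becomes
$$
\sum_{l=1}^r\int_0^\infty q^{(2M)}_{2\vec j,l}(\rho)\,\rho^{\ell}\,\rho^{2M+1}\omega_l(\rho)\,\dr\rho .
$$
No change of variable $t=\rho^2$ is needed here, since the auxiliary polynomials are already in the variable $\rho$. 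By \eqref{eq:type-i-orthogonality-radial-aux} with multi-index $2\vec j$, whose modulus is $2|\vec j|$, this sum is zero whenever $\ell<2|\vec j|-1$.

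Finally I translate this back to $|\mathbf a|$. The condition $|\mathbf a|<|\vec n|-1$ means $\ell<2|\vec j|-1$, since $M+2|\vec j|-1=|\vec n|-1$. So every even $\ell$ in that range gives zero, and odd $\ell$ was already handled by the angular factor. Note also that $\ell=2|\vec j|-1$, which would give the normalization value $1$, is odd, so the angular factor kills it as well.

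The step needing care is the exponent bookkeeping in the radial integral: the Jacobian power must match the auxiliary weight $\rho^{k+1}$ with $k=2M$. This matching holds exactly when $d=2$. For a general $d$ the Jacobian contributes $\rho^{d-1}$, and one would have to use $k=2M+d-2$ instead. I would remark on this, but restrict to the $\mathbb R^2$ setting of the statement.
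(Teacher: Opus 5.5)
Your proof is correct and follows the paper's argument: the angular factor is killed by Lemma~\ref{lemma:trig-integrals} unless $|\mathbf a|-(|\vec n|-2|\vec j|)=\ell$ is even and nonnegative, and in that case the radial sum $\sum_{l}\int_0^\infty q^{(2|\vec n|-4|\vec j|)}_{2\vec j,l}(\rho)\,\rho^{\ell}\,\rho^{2|\vec n|-4|\vec j|+1}\omega_l(\rho)\,\dr\rho$ vanishes by \eqref{eq:type-i-orthogonality-radial-aux} for $\ell<2|\vec j|-1$. Two of your remarks are accurate refinements of what the paper writes: the exponent matching $k=2|\vec n|-4|\vec j|$ works only for $d=2$ (general $d$ would need $k=2M+d-2$), and the odd value $\ell=2|\vec j|-1$ is also removed by parity.
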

    \begin{proof}
        The proof is parallel to that of Proposition~\ref{prop:type-i-orthogonality-d-vars}. However, in this case the integral depending on $\rho$ is 
        $$
        \int_{0}^\infty \sum_{l=1}^r q^{(2|\vec n|-4|\vec j|)}_{2\vec j, l}(\rho)\, \rho^\ell\,\rho^{2|\vec n|-4|\vec j|+1}\, \omega_l(\rho)\,\dr\rho,
        $$
        and \eqref{eq:type-i-orthogonality-radial-aux} leads to the result.
        
    \end{proof}

    As seen, the multiple orthogonal properties satisfied by these polynomials are quite similar to those in Propositions~\ref{prop:type-ii-orthogonality-d-vars} and \ref{prop:type-i-orthogonality-d-vars}.

    \subsection{Multiple Orthogonal Polynomials for multivariate Hermite weights}

    As an example, we consider, on the full space $\RR^d$, 
    $$
    W_l(x_1,\dots,x_d)=e^{-c_l(x_1^2+\cdots+x_d^2)}, \qquad \mathbf x =(x_1,\dots,x_d)\in\RR^d,\quad 1\leq l \leq r,
    $$
    where $c_1,\dots,c_r>0$ and $c_i\neq c_j$ if $i\neq j$. Using spherical polar coordinates \eqref{eq:sph-coordinates-d}, these weights reduce to 
    $$
    W_l(\rho, \theta_1,\dots,\theta_{d-1})=e^{-c_l \rho^2}, \qquad \rho\geq 0, \quad 1\leq l\leq r.
    $$
    In this way, $W_l$ is a radial weight such that $W_l(\mathbf x)=\omega_l(\rho)= e^{-c_l\rho^2}$, $1\leq l\leq r$. Multiple orthogonal polynomials with respect to this set of weights are given below. First, recall from Section~\ref{subsec:Laguerre-MOP} the multiple Laguerre polynomials of the second kind, which are multiple orthogonal with respect to the weights $t^\alpha e^{-c_l t}$ on $[0,\infty)$. For this reason, they provide the polynomials leading the radial part of our next definitions.

    \begin{definition}
        Let $\vec n = (n_1,\dots,n_r)$ and $\vec j = (j_1,\dots,j_r)\in\NN_0^r$ be such that  $0\leq |\vec j| \leq |\vec n|/2$, and consider the Type~II and Type~I Laguerre MOPs of the second kind $L^{(|\vec n|-2|\vec j|,\vec c)}_{\vec j}$ and $B^{(|\vec n|-2|\vec j|,\vec c)}_{\vec j,1},\dots,B^{(|\vec n|-2|\vec j|,\vec c)}_{\vec j,r}$, introduced in Section~\ref{subsec:Laguerre-MOP}. Let $\{Y_\nu^{|\vec n|-2|\vec j|}:1\leq\nu\leq\dim\mathbb H_{|\vec n|-2|\vec j|}^d\}$ be an orthogonal basis of spherical harmonics. Then, we define
        \begin{itemize}
            \item \textbf{Type II}:
            \begin{equation}\label{eq:type-ii-Laguerre-d-vars}
                \mathbf P^{\vec n}_{\vec j,\nu}(\mathbf x) = L^{(|\vec n|-2|\vec j|,\vec c)}_{\vec j}(\rho^2) Y_\nu^{|\vec n|-2|\vec j|}(\mathbf x),\qquad \mathbf x\in\RR^d.
            \end{equation}
            \item \textbf{Type I}:
            \begin{equation}\label{eq:type-i-Laguerre-d-vars}
                \mathbf B^{\vec n}_{\vec j,\nu,(l)}(\mathbf x) = B^{(|\vec n|-2|\vec j|,\vec c)}_{\vec j,l}(\rho^2) Y_\nu^{|\vec n|-2|\vec j|}(\mathbf x),\qquad \mathbf x\in\RR^d.
            \end{equation}
        \end{itemize}
    \end{definition}

    Indeed, these constructions are again polynomials in Cartesian coordinates $x_1,\dots,x_d$ and satisfy multiple orthogonality relations, which we make explicit in the next results. The proofs are omitted owing to their similarity to the previous ones, using the multiple orthogonal properties of multiple Laguerre polynomials of the second kind \eqref{eq:mop-Laguerre-type-ii-1-var} and \eqref{eq:mop-Laguerre-type-i-1-var} for the radial part.

    \begin{proposition}
        Let $\vec n = (n_1,\dots,n_r)$ and $\vec j = (j_1,\dots,j_r)\in\NN_0^r$ be such that  $0\leq |\vec j| \leq |\vec n|/2$, and consider the $d$-variate polynomials $\mathbf P^{\vec n}_{\vec j,\nu}$ introduced in \eqref{eq:type-ii-Laguerre-d-vars} and $\mathbf B^{\vec n}_{\vec j,\nu,(l)}$, $1\leq l\leq r$ in \eqref{eq:type-i-Laguerre-d-vars}, for $1\leq \nu\leq \dim\mathbb H_{|\vec n|-2|\vec j|}^d$. Denote by $\mathbf x^{\mathbf a}=x_1^{a_1}\cdots x_d^{a_d}$ a $d$-variate monomial. Then,
        \begin{itemize}
            \item \textbf{Type II}: for  $1\leq l \leq r$
            \begin{equation}
            \int_{\mathbb R^d} \mathbf P^{\vec n}_{\vec j,\nu}(\mathbf x)\, \mathbf x^{\mathbf a}\, W_l(\mathbf x)\, \dr\mathbf x = 0 \qquad\text{ if } |\mathbf a|<|\vec n|-2|\vec j|+2j_l.
            \end{equation}
            
            \item \textbf{Type I}:
            \begin{equation}
            \sum_{l=1}^r\int_{\mathbb R^d} \mathbf B^{\vec n}_{\vec j,\nu,(l)}(\mathbf x)\, \mathbf x^{\mathbf a}\, W_l(\mathbf x)\, \dr\mathbf x = 0 \qquad\text{ if } |\mathbf a|<|\vec n|-2.
            \end{equation}
            \end{itemize}
    \end{proposition}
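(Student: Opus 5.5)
We will follow the same scheme as the proofs of Proposition~\ref{prop:type-ii-orthogonality-d-vars} and Proposition~\ref{prop:type-i-orthogonality-d-vars}, replacing the ball $B^d$ by $\RR^d$ and the Jacobi weight by the Gaussian weights $e^{-c_l\rho^2}$. Write $M=|\vec n|-2|\vec j|$.

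\textbf{Separation of variables.} Pass to spherical polar coordinates \eqref{eq:sph-coordinates-d}, now with $\rho\in[0,\infty)$. Use the Jacobian \eqref{eq:jacobian-sph-pol-coordinates} and the monomial expansion \eqref{eq:monomial-d-vars}, noting that $Y_\nu^{M}(\mathbf x)=\rho^{M}Y_\nu^{M}(1,\theta_1,\dots,\theta_{d-1})$. Each integral then factors as a radial integral times an angular integral over $\mathbb S^{d-1}$, i.e.\ $I=I_\rho\times I_{\theta_1}\times\cdots\times I_{\theta_{d-1}}$.

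The angular factors do not involve the weights at all, so they are literally those of \eqref{eq:I-theta_1}--\eqref{eq:I-theta_k}. The inductive argument in the proof of Proposition~\ref{prop:type-ii-orthogonality-d-vars} therefore applies verbatim: the angular part vanishes whenever $|\mathbf a|<M$, or whenever $|\mathbf a|-M$ is odd. If one uses an arbitrary orthogonal basis of $\mathbb H^d_M$ instead of $Y_{\mathbf m}^M$, the same conclusion follows directly. Indeed, $\mathbf x^{\mathbf a}$ restricted to the sphere decomposes into harmonics of degrees $|\mathbf a|,|\mathbf a|-2,\dots$, which is orthogonal to $\mathbb H^d_M$ by \eqref{eq:orthogonality-harmonics} in exactly those cases.

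\textbf{Radial integral, Type~II.} It remains to treat $|\mathbf a|=M+\ell$ with $\ell$ even. Here
\[
I_\rho=\int_0^\infty L^{(M,\vec c)}_{\vec j}(\rho^2)\,\rho^{2M+\ell+d-1}e^{-c_l\rho^2}\,\dr\rho .
\]
The substitution $t=\rho^2$ gives
\[
I_\rho=\tfrac12\int_0^\infty L^{(M,\vec c)}_{\vec j}(t)\,t^{\ell/2}\,t^{M+(d-2)/2}e^{-c_lt}\,\dr t .
\]
This is where the main obstacle lies, namely the parameter bookkeeping. For the exponent $t^{M+(d-2)/2}$ to match the weight of \eqref{eq:mop-Laguerre-type-ii-1-var}, the Laguerre parameter should be $\alpha=M+(d-2)/2$, as in the ball case. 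The stated definition uses $\alpha=M$, which is the correct weight only when $d=2$.

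I would handle this by reading the definition with $\alpha=M+(d-2)/2$, which is consistent with \eqref{eq:MOP-d-variables}, and noting that the two readings coincide for $d=2$. With that convention, \eqref{eq:mop-Laguerre-type-ii-1-var} gives $I_\rho=0$ whenever $\ell/2<j_l$, that is, whenever $|\mathbf a|<M+2j_l$.

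\textbf{Radial integral, Type~I.} The same substitution turns the radial factor into
\[
\tfrac12\sum_{l=1}^r\int_0^\infty B^{(\cdot,\vec c)}_{\vec j,l}(t)\,t^{\ell/2}\,t^{M+(d-2)/2}e^{-c_lt}\,\dr t .
\]
The angular factor is common to all $l$, so it can be pulled out of the sum exactly as in \eqref{eq:integral-type-I-split}. By \eqref{eq:mop-Laguerre-type-i-1-var}, the radial sum vanishes for $\ell/2\le|\vec j|-2$, i.e.\ whenever $|\mathbf a|<M+2|\vec j|-2=|\vec n|-2$.

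Combining these with the angular vanishing cases covers every $\mathbf a$ with $|\mathbf a|<|\vec n|-2$ in the Type~I statement, and every $\mathbf a$ with $|\mathbf a|<M+2j_l$ in the Type~II statement.

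Finally, all integrals converge absolutely, since the Gaussian factor dominates any polynomial growth. This justifies the use of Fubini's theorem in the factorization.
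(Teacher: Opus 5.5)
Your proposal is correct and follows the argument the paper intends; the paper omits the proof as analogous to the ball case. You split the integral into an angular factor, which vanishes when $|\mathbf a|<M$ or $|\mathbf a|-M$ is odd, and a radial factor that, after $t=\rho^2$, is killed by the Type~II and Type~I orthogonality of the multiple Laguerre polynomials of the second kind. Your parameter correction is also right and is genuinely needed: with the literal parameter $|\vec n|-2|\vec j|$ the claim fails for $d\neq 2$. For instance, $d=3$, $r=1$, $\vec n=(2)$, $\vec j=(1)$ gives $\mathbf P=\|\mathbf x\|^2-1/c$ and $\int_{\RR^3}\mathbf P\,e^{-c\|\mathbf x\|^2}\,\dr\mathbf x=\frac{1}{2c}(\pi/c)^{3/2}\neq 0$. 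With the parameter $M+(d-2)/2$, everything goes through as you describe.
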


    \section{Conclusions and future work}

    Drawing upon the established theory of orthogonal polynomials on the unit ball $B^d$, we use multiple orthogonal polynomials to extend this framework to the multivariate setting. Specifically, we preserve the classical structure from \cite{DX14}, where polynomials are defined as the product of a radial component and an angular component, the latter given by spherical harmonics. In our extension, the radial part, traditionally associated with Jacobi polynomials, is replaced by Jacobi-Pi\~neiro MOPs. This modification enables the construction of multivariate polynomials on the ball that satisfy Type~I and Type~II multiple orthogonality relations, thereby generalizing the univariate Jacobi-Pi\~neiro cases \eqref{eq:type-i-orthogonality-1-var} and \eqref{eq:type-ii-orthogonality-1-var}. 
    
    To illustrate the usefulness of this construction, we provide an extension of the Nearest Neighbor Relations using arbitrary paths and carry out a detailed study of the case in which the multi-indices follow the step-line. Finally, we address an extension of bivariate multiple orthogonality for radial weights, providing an example for Hermite-type weights where multiple Laguerre polynomials of the second kind were employed in the definitions.

    In future work, we aim to extend existing constructions for bivariate orthogonal polynomials—such as those presented in \cite[Section 2.6.1]{DX14}—to the multiple orthogonal setting. This will enable the generalization of various bases on planar domains, providing a broader framework for multivariate multiple orthogonality.
    
	\section*{Acknowledgements}

    The work is partially supported by grants PID2023-149117NB-I00, PID2024-155133NB-I00 and CEX
    2020-001105-M, all funded by ``Ministerio de Ciencia, Innovación y Universidades'' 
    
    \noindent(MICIU/AEI/10.13039/501100011033 and ERDF/EU), Spain.

	\bibliographystyle{plain}
    \bibliography{refs}{}

    \newpage
	
	\appendix

    \section{Appendix}\label{sec:appendix}

    \begin{lemma}
    \label{lemma:trig-integrals}
        Consider $a,b,m\in\NN_0$. Then,
        \begin{align}
        \label{eq:trig-integrals}
            \int_0^{2\pi}\cos(mt) \cos^a(t) \sin^b(t)\, \dr t &=0, & 
            \int_0^{2\pi}\sin(mt) \cos^a(t) \sin^b(t)\, \dr t &=0,
        \end{align}
        if one of the following conditions is satisfied:
        \begin{itemize}
            \item $a+b<m$, or
            \item $a+b = m+\ell$, with $\ell$ and odd natural number. 
        \end{itemize}
    \end{lemma}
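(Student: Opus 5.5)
We need to prove Lemma~\ref{lemma:trig-integrals}. The plan is to pass to complex exponentials and read off which Fourier modes appear in the product $\cos^a t\,\sin^b t$. Writing $\cos t=(e^{it}+e^{-it})/2$ and $\sin t=(e^{it}-e^{-it})/(2i)$ and expanding with the binomial theorem, we get
\[
\cos^a t\,\sin^b t=\sum_{p=0}^{a}\sum_{q=0}^{b} c_{p,q}\, e^{i(2p-a)t}\,e^{i(2q-b)t}
\]
for some constants $c_{p,q}\in\mathbb C$. So $\cos^a t\sin^b t$ is a trigonometric polynomial $\sum_{k} d_k e^{ikt}$ whose frequencies $k=2(p+q)-(a+b)$ satisfy two constraints:
\begin{itemize}
\item $|k|\le a+b$;
\item $k\equiv a+b \pmod 2$.
\end{itemize}

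Next, write $\cos(mt)=(e^{imt}+e^{-imt})/2$ and $\sin(mt)=(e^{imt}-e^{-imt})/(2i)$. Each target integral then becomes a finite linear combination of integrals $\int_0^{2\pi} e^{i(k\pm m)t}\,\dr t$, which equal $2\pi\,\delta_{k,\mp m}$. Hence each integral in \eqref{eq:trig-integrals} vanishes unless some frequency $k$ in the expansion satisfies $k=\pm m$.

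Both conditions of the lemma exclude this.
\begin{itemize}
\item If $a+b<m$, then $|k|\le a+b<m$, so $k\neq\pm m$.
\item If $a+b=m+\ell$ with $\ell$ odd, then $k\equiv a+b\equiv m+1\pmod 2$, so $k$ and $\pm m$ have different parity and again $k\ne\pm m$.
\end{itemize}
In either case every term integrates to zero, which proves both identities at once.

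No step is a real obstacle. The only point needing care is bookkeeping the frequency set, in particular the parity claim $2(p+q)-(a+b)\equiv a+b$. This holds because $2(p+q)$ is even and $-(a+b)\equiv a+b \pmod 2$.

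As an alternative real-variable route, one could argue that $\cos^a t\sin^b t$ lies in the span of $\{\cos kt,\sin kt : k\le a+b,\ k\equiv a+b \pmod 2\}$, by induction on $a+b$ using the product-to-sum formulas. Orthogonality of distinct trigonometric modes on $[0,2\pi]$ then finishes the proof. The exponential argument is shorter, so I would present that one.
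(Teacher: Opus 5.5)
Your proposal is correct and is essentially the paper's proof: the paper also expands $\cos^a t$, $\sin^b t$, $\cos(mt)$ and $\sin(mt)$ into complex exponentials. It then checks that no exponent $a+b-2(j+h)\pm m$ can vanish, using the size bound when $a+b<m$ and parity when $a+b-m$ is odd, and your frequency set $\{k : |k|\le a+b,\ k\equiv a+b \pmod 2\}$ is a slightly tidier record of those same two observations.
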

    
    \begin{proof}
        Despite the fact that these are real valued integrals, we will use some useful results from complex analysis to prove this result. First, recall the equalities
        \begin{align}
            \cos(t) &= \dfrac{e^{it}+e^{-it}}{2}, &
            \sin(t) &= \dfrac{e^{it}-e^{-it}}{2 i}, 
        \end{align}
        So that 
        \begin{align*}
            \cos^a(t) &= \frac 1 {2^a}\sum_{j=0}^a\binom a j e^{it(a-2j)},&
            \sin^b(t) &= \frac 1 {(2i)^b}\sum_{h=0}^b (-1)^h\binom b h e^{it(b-2h)},\\
            \cos(mt) &= \dfrac{e^{imt}+e^{-imt}}{2},&
            \sin(mt) &= \dfrac{e^{imt}-e^{-imt}}{2 i}.
        \end{align*}
        Substituting these expressions in \eqref{eq:trig-integrals}, we have
        \begin{multline}
        \label{eq:lemma-linear-combination-cos}
        \int_0^{2\pi}\cos mt \cos^a t \sin^b t \dr\theta=\frac 1 {2^{a+b+1}i^b}\sum_{j=0}^a\sum_{h=0}^b(-1)^h\binom{a}{j}\binom{b}{h}\Big(\int_0^{2\pi}e^{it(a+b-2(j+h)+m)}\dr t\\+\int_0^{2\pi}e^{it(a+b-2(j+h)-m)}\dr t\Big),
        \end{multline}
        \begin{multline}
        \label{eq:lemma-linear-combination-sin}
        \int_0^{2\pi}\sin mt \cos^a t \sin^b t \dr\theta=\frac 1 {2^{a+b+1}i^{b+1}}\sum_{j=0}^a\sum_{h=0}^b(-1)^h\binom{a}{j}\binom{b}{h}\Big(\int_0^{2\pi}e^{it(a+b-2(j+h)+m)}\dr t\\-\int_0^{2\pi}e^{it(a+b-2(j+h)-m)}\dr t\Big),
        \end{multline}
        On the other hand, it is easy to check that
        \begin{equation*}
            \int_0^{2\pi}e^{itn}\dr t =\begin{cases}
                0 & \text{ if } n\neq0\\
                2 \pi & \text{ if }n=0.
            \end{cases}
        \end{equation*}
        
        Now, assume $a+b<m$, so that, if $0\leq j\leq a$, $0\leq h\leq b$, observe that
        $$
        a+b-2(j+h)+m\geq -a-b+m>0
        $$
        and
        $$
        a+b-2(j+h)-m\leq a+b-m<0,
        $$
        so that both integrals in the right-hand side of \eqref{eq:lemma-linear-combination-cos} and \eqref{eq:lemma-linear-combination-sin} vanish for every value of $j$ and $h$ and the result follows in this case.

        If $a+b=m+\ell$, with $\ell$ an odd integer, the integrals in the right-hand side of \eqref{eq:lemma-linear-combination-cos} and \eqref{eq:lemma-linear-combination-sin} do not vanish only if $(2m+\ell)/2= j+h$ and $\ell/2= j+h$ respectively. If $\ell$ is odd, these conditions will never be satisfied, so that both integrals will be zero for every value of $j$ and $h$, and the result follows too.  
    \end{proof}

\end{document}